\newtheorem{theorem}{{\bf Theorem}}[section]
\newtheorem{corollary}[theorem]{{\bf Corollary}}
\newtheorem{definition}[theorem]{{\bf Definition}}
\newtheorem{example}[theorem]{{\bf Example}}
\newtheorem{lemma}[theorem]{{\bf Lemma}}
\newtheorem{remark}[theorem]{{\bf Remark}}
\newtheorem{proposition}[theorem]{{\bf Proposition}}
\newenvironment{enumcases}{\begin{enumerate}[label=\arabic*.)]}{\end{enumerate}}
\newenvironment{enumproperties}{\begin{enumerate}[label=(\roman*)]}{\end{enumerate}}
\newcommand{\refenumproperty}[1]{\ref{#1}}%\newcommand{\refenumproperty}[1]{(\ref{#1})}
\newenvironment{enuminvariants}{\begin{enumerate}[label=(\alph*)]}{\end{enumerate}}
\newcommand{\refenuminvariant}[1]{\ref{#1}}
\newcommand{\proc}[1]{\ifmmode\text{\textsc{#1}}\else\textsc{#1}\xspace\fi}
\newcommand{\sign}{\ensuremath{\mathrm{sign}}}
\newcommand{\abs}[1]{\ensuremath{\left\vert #1\right\vert}}
\newcommand{\supp}{\ensuremath{\mathop{\mathrm{supp}}}} % support of a function
\newcommand{\cupi}{\ensuremath{\dot\cup}}
\newcommand{\tow}{\ensuremath{\mathop{\mathrm{tow}}}} % tower function
\newcommand{\eps}{\varepsilon}
\renewcommand{\phi}{\varphi}
\newcommand{\Z}{\ensuremath{\mathbb{Z}}}
\newcommand{\Nz}{\ensuremath{{\mathbb{N}_0}}}
\renewcommand{\O}{\ensuremath{\mathcal{O}}}
\newcommand{\sO}{\ensuremath{\tilde{\mathcal{O}}}}
\newcommand{\grpeq}{\sim}
\newcommand{\streq}{=}
\newcommand{\g}[1]{\ensuremath{\langle #1\rangle}}
\newcommand{\gr}[2]{\ensuremath{\langle #1\ \vert\ #2\rangle}}
\newcommand{\e}{\eps} % evaluation function in power circuits
\newcommand{\ch}{\ensuremath{\mathrm{ch}}} % number of maximal chains
\newcommand{\pot}{\ensuremath{\mathrm{pot}}} % potential function
\newcommand{\clone}{\proc{Clone}}
\newcommand{\PBC}{\proc{Prolong\-Base\-Chain}}
\newcommand{\ERed}{\proc{Extend\-Reduction}}
\newcommand{\Red}{\proc{Reduce}}
\newcommand{\ETree}{\proc{Extend\-Tree}}
\newcommand{\MTree}{\proc{Make\-Tree}}
\newcommand{\CompMark}{\proc{Compactify\-Marking}}
\newcommand{\InsNode}{\proc{Insert\-Node}}
\newcommand{\IncMark}{\proc{Increment\-Marking}}
\newcommand{\BS}{\ensuremath{\mathrm{BS}}}
\newcommand{\BG}[1]{\ensuremath{\mathrm{G}_{(1,#1)}}}
\newcommand{\Hig}[2]{\ensuremath{\mathrm{H}_{#2}(1,#1)}}
\begin{document}
%================================================
\title{Efficient algorithms for highly compressed data:\\
The Word Problem in\\
Generalized Higman Groups is in P}

\author{J\"urn Laun\\[1ex]
Institut f\"ur Formale Methoden der Informatik\\
Universit\"at Stuttgart, Universit\"atsstra\ss e 38\\
70199 Stuttgart, Germany\\
\texttt{laun@fmi.uni-stuttgart.de}}

\maketitle

%------------------------------------------------
\begin{abstract}

\noindent This paper continues the 2012 STACS contribution by Diekert, Ushakov, and the author. We extend the results published in the proceedings in two ways. 

First, we show that the data structure of power circuits can be generalized to work with arbitrary bases $q\ge 2$. This results in a data structure that can hold huge integers, arising by iteratively forming powers of $q$. We show that the properties of power circuits known for $q=2$ translate to the general case. This generalization is non-trivial and additional techniques are required to preserve the time bounds of arithmetic operations that were shown for the case $q=2$. 

The extended power circuit model permits us to conduct operations in the Baumslag-Solitar group $\BS(1,q)$ as efficiently as in $\BS(1,2)$. This allows us to solve the word problem in the generalization $H_4(1,q)$ of Higman's group, which is an amalgamated product of four copies of the Baumslag-Solitar group $\BS(1,q)$ rather than $\BS(1,2)$ in the original form. 

As a second result, we allow arbitrary numbers $f\ge 4$ of copies of $\BS(1,q)$, leading to an even more generalized notion of Higman groups $H_f(1,q)$. We prove that the word problem of the latter can still be solved within the $\O(n^6)$ time bound that was shown for $H_4(1,2)$. 
\end{abstract}

{\small\noindent Keywords: Data structures; Compression; Algorithmic group theory; Word Problem.}

%ACM classification: F.2.2 Computations on discrete structures, G.2.2 Graph Algorithms

%================================================
\newpage\section{Introduction}\label{sec:intro}

This work is a sequel to the STACS paper \cite{dlu12efficient} and its journal version \cite{dlu13efficient}. However, we try to keep it self-contained by reiterating everything of importance from the two preceeding papers. We extend their results to a more general version of power circuits. As a consequence, we can apply them to larger classes of groups. 

The group $H_4$ was introduced by Higman in 1951 and served to provide the first known example of a finitely generated infinite simple group \cite{higman51finitely}. It belongs to a family $H_f$ ($f\ge 1$) of groups with $f$ generators and $f$ relators:
\begin{equation*}
	H_f=\gr{a_1,\ldots,a_f}{a_{i+1}a_ia_{i+1}^{-1}=a_i^2\ (i\in\Z/f\Z)}
\end{equation*}

For $f<4$ these groups are trivial, which is easy to see for $f\in\{1,2\}$, but suprisingly hard for $f=3$. The latter case was proven by Hirsch \cite{higman51finitely}, see also \S 23 in \cite{neumann53essay} and \cite{allcock10triangles}. If $f\ge 4$, then $H_f$ is infinite, see \cite{serre02trees}, Section 1.4. Since $H_f$ has no non-trivial normal subgroups of finite index, taking a minimal non-trivial quotient results in a finitely generated infinite simple group. 

Until recently, Higman's Group $H_4$ was a candidate for a ``natural'' group with non-elementary word problem.  This was suggested by the huge compression that this group allows. In fact, there are words of length $n$ over the generators $a_i$ and their inverses, which in the group are equal to $a_1^{\tow_2(n)}$, where $\tow_2(n)$ is the tower function (also called ``tetration'') defined by $\tow_q(0)=1$ and $\tow_q(n+1)=q^{\tow_q(n)}$. However, in \cite{dlu12efficient} it was shown that the word problem of $H_4$ is decidable in $\O(n^6\cdot\log n)$ time and \cite{dlu13efficient} improved this bound to $\O(n^6)$. Both results rely on a data structure called ``power circuit'' which was introduced by Miasnikov, Ushakov, and Won in \cite{muw11pc}. Power circuits had already proven useful in algorithmic group theory. In fact, their invention was entailed by the wish to efficiently solve the word problem in the Baumslag-Gersten group $\BG{2}$ (see \cite{muw11bg} and Section \ref{sec:BG}) which shares with $H_4$ the property of huge compression. (In \cite{muw11bg} the group $\BG{2}$ is called ``Baumslag group''. We use the equally common name ``Baumslag-Gersten group'' to avoid confusion with the Baumslag-Solitar group.) For $q=2$, power circuits have been implemented \cite{muCRAG} and there is a computer program solving the word problem in $\BG{2}$. 

This paper builds on the work of Diekert, Ushakov, and the author conducted in \cite{dlu12efficient} and \cite{dlu13efficient}. Its contributions are twofold: In Section \ref{sec:pc}, we extend power circuits to allow arbitrary bases $q\ge 2$. This necessitates changes in the reduction procedure, the core component of power circuits. In Section \ref{sec:higman}, we generalize Higman's group $H_4$ by replacing the underlying group $\BS(1,2)$ by $\BS(1,q)=\gr{a,t}{tat^{-1}=a^q}$. Power circuits with base $q$ are naturally suited for computations in this group. 

Furthermore, with the help of rewriting systems, we give a constructive method of treating any group $H_f$ ($f\ge 4$) rather that just $H_4$. Combining this with base $q$ power circuits leads to an algorithm for the word problem in generalized Higman groups $H_f(1,q)$ which retains the $\O(n^6)$ time bound proved in \cite{dlu13efficient} for $q=2$ and $f=4$. 

%------------------------------------------------
\paragraph{Notation and preliminaries}\label{sec:notation}

Algorithms and decision problems are classified by their time complexity on a random-access machine (RAM). We use the notion of amortized analysis with respect to a potential function, see Section 17.3 in \cite{cormen09introduction}. 

With regard to group theory, we use standard notation and facts that can be found in any textbook on the subject, e.g. \cite{lyndon01combinatorial}. In particular, we apply the technique of Britton reductions for solving the word problem in HNN extensions and amalgamated products. 

Rewriting systems are of particular importance for this work. We assume that the reader is familiar with the basic notions of (local) confluence and termination. See for example the textbook \cite{bo93string} for a quick introduction. 

%================================================
\section{Power circuits}\label{sec:pc}

Power circuits were introduced by Miasnikov, Ushakov, and Won in \cite{muw11pc}. While the underlying ideas presented in this chapter originate from their work, there are some important differences. First, we use a different (and hopefully more accessible) notation, following \cite{dlu12efficient} and \cite{dlu13efficient}. We also allow multiple markings in one circuit. The most important modification, which also distinguishes this paper from \cite{dlu12efficient} and \cite{dlu13efficient}, is the generalization from base $2$ to arbitrary bases $q\ge 2$. 

%------------------------------------------------
\subsection{Power Circuit and Evaluation}
For the rest of this paper, we fix an integer $q\ge 2$ for the base and the interval $D=\{-q+1,\ldots,q-1\}$ (the set of digits). We start with a directed acyclic edge-labelled graph without multi-edges, given by $\Pi=(\Gamma,\delta)$. Here, $\Gamma$ is a finite set which will act as the set of nodes (or vertices). The labelled edges (or arcs) are given by the map $\delta:\Gamma\times\Gamma\to D$ where $\delta(u,v)=0$ means that there is no edge from $u$ to $v$ and $\delta(u,v)=e\neq 0$ implies an edge from $u$ to $v$ labelled with the number $e$. In other words, the edge set is $\supp\delta$, the support of the map $\delta$. In addition, we require that the directed graph $(\Gamma,\supp\delta)$ is acyclic. We shall make this assumption throughout this paper without mentioning it again. For any operation on graphs introduced in this chapter, it will be obvious that acyclicity is preserved. 

A marking\index{power circuit!marking} of $\Pi=(\Gamma,\delta)$ is a mapping $M:\Gamma\to D$. Often we regard $M$ as a labelled subset of the nodes of $\Pi$, where the subset is $\supp M$ and the labels are given by $M\vert_{\supp M}:\supp M\to D$. In this sense, $M=0$ (the constant zero marking) and $M=\emptyset$ (the empty marking) are the same. 

Each node $u\in\Gamma$ induces in a natural way its successor marking\index{successor marking} $\Lambda_u$ defined by
\begin{equation*}
	\Lambda_u:\Gamma\to D;\ v\mapsto M(u,v).
\end{equation*}
Intuitively, the successor marking of a node $u$ consists of the target nodes of edges starting at $u$ and their labels are given by those of the edges. 

The evaluation function\index{power circuit!evaluation} $\e$ assigns a real number to each node and each marking of a graph $\Pi$. As $\Pi$ is acyclic we can give an inductive definition of $\e$:
\begin{align*}
	\e(\emptyset)&=0&\text{where $\emptyset$ is the empty marking,}\cr
	\e(M)&=\sum_{u\in\supp M}M(u)\cdot\e(u)&\text{for all other markings $M$,}\cr
	\e(u)&=q^{\e(\Lambda_u)}&\text{for each node $u\in\Gamma$}
\end{align*}
Note that this implies $\e(u)=1$ for all leaves $u$ (nodes without outgoing edges). For every node $u\in\Gamma$ we have
\begin{equation*}
	\e(\Lambda_u)=\log_q\e(u).
\end{equation*}

\begin{example}
Figure \ref{fig:graph_eval} shows an example of such a graph for $q=3$. The set of nodes is $\Gamma=\{u_1,u_2,u_3,u_4,u_5\}$ and $\delta$ is given by
\begin{equation*}
	\begin{matrix}
		\delta(u_2,u_1)=+1,&\delta(u_3,u_1)=+2,&\delta(u_4,u_1)=-1,&\delta(u_4,u_2)=-2,\cr
		\delta(u_4,u_3)=+1,&\delta(u_5,u_2)=+2,&\delta(u_5,u_3)=+1,&\delta(u_5,u_4)=-2.
	\end{matrix}
\end{equation*}
The nodes evaluate to
\begin{equation*}
	\e(u_1)=1,\ \e(u_2)=3,\ \e(u_3)=9,\ \e(u_4)=9,\text{ and }\e(u_5)=\frac{1}{27}.
\end{equation*}
\end{example}

\begin{figure}[ht]
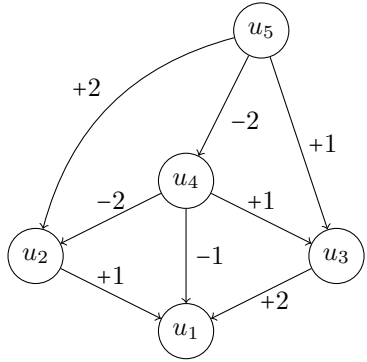

	\begin{center}
		\begin{PCpic}
			\PCnode{u1}{$u_1$}{(0,0)}
			\PCnode{u2}{$u_2$}{(-2,1)}
			\PCnode{u3}{$u_3$}{(2,1)}
			\PCnode{u4}{$u_4$}{(0,2)}
			\PCnode{u5}{$u_5$}{(1,4)}
			\PCedge{u2}{u1}{$+1$}{above}{}
			\PCedge{u3}{u1}{$+2$}{below right=-0.2cm}{}
			\PCedge{u4}{u1}{$-1$}{right}{}
			\PCedge{u4}{u2}{$-2$}{above}{}
			\PCedge{u4}{u3}{$+1$}{above}{}
			\PCedge{u5}{u2}{$+2$}{above left}{bend right}
			\PCedge{u5}{u3}{$+1$}{right}{}
			\PCedge{u5}{u4}{$-2$}{below right=-0.05cm}{}
		\end{PCpic}
	\end{center}
	\caption{Example of an edge-labelled graph ($q=3$)}
	\label{fig:graph_eval}
\end{figure}

\begin{lemma}\label{lem:pc_condition}
Let $\Pi=(\Gamma,\delta)$ be as described above. The following statements are equivalent:
\begin{enumproperties}
\item\label{prop:pc_condition:node} $\e(u)\in q^\Nz=\{q^n:n\in\Nz\}$ for every node $u\in\Gamma$
\item\label{prop:pc_condition:succ} $\e(\Lambda_u)\ge 0$ for every node $u\in\Gamma$
\item\label{prop:pc_condition:mark} $\e(M)\in\Z$ for every marking possible marking $M$ in $\Pi$
\end{enumproperties}
\end{lemma}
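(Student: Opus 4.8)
The plan is to prove the cycle of implications $\refenumproperty{prop:pc_condition:node}\Rightarrow\refenumproperty{prop:pc_condition:succ}\Rightarrow\refenumproperty{prop:pc_condition:mark}\Rightarrow\refenumproperty{prop:pc_condition:node}$, exploiting the acyclicity of $\Pi$ throughout so that all inductions are well-founded with respect to the partial order induced by the edge relation (a node is ``below'' another if it is reachable by a directed path).

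\medskip

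\noindent\emph{Step 1: $\refenumproperty{prop:pc_condition:node}\Rightarrow\refenumproperty{prop:pc_condition:succ}$.} This is essentially immediate from the identity $\e(\Lambda_u)=\log_q\e(u)$ recorded just before the lemma. If $\e(u)\in q^\Nz$ for every node $u$, then in particular $\e(u)=q^n$ for some $n\in\Nz$, so $\e(\Lambda_u)=\log_q\e(u)=n\ge 0$.

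\medskip

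\noindent\emph{Step 2: $\refenumproperty{prop:pc_condition:succ}\Rightarrow\refenumproperty{prop:pc_condition:mark}$.} First I would upgrade \refenumproperty{prop:pc_condition:succ} to the stronger statement that every node evaluates to an element of $q^\Nz$; this is proved by induction along the acyclic order. For a leaf $u$ we have $\e(u)=q^{\e(\emptyset)}=q^0=1\in q^\Nz$. For a general node $u$, every node $v\in\supp\Lambda_u$ is strictly below $u$, so by induction $\e(v)\in q^\Nz\subseteq\Z$; hence $\e(\Lambda_u)=\sum_{v\in\supp\Lambda_u}\Lambda_u(v)\e(v)\in\Z$, and by hypothesis \refenumproperty{prop:pc_condition:succ} this integer is $\ge 0$, so $\e(u)=q^{\e(\Lambda_u)}\in q^\Nz$. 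Now for an arbitrary marking $M$, each $\e(u)$ with $u\in\supp M$ is an integer, so $\e(M)=\sum_{u\in\supp M}M(u)\e(u)\in\Z$.

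\medskip

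\noindent\emph{Step 3: $\refenumproperty{prop:pc_condition:mark}\Rightarrow\refenumproperty{prop:pc_condition:node}$.} Again I would argue by induction along the acyclic order, but this time the key point is that the successor marking $\Lambda_u$ is itself a marking of $\Pi$, so \refenumproperty{prop:pc_condition:mark} applies to it. For a leaf the claim is clear as above. For a general node $u$, the marking $\Lambda_u$ is a legitimate marking, so $\e(\Lambda_u)\in\Z$ by \refenumproperty{prop:pc_condition:mark}; hence $\e(u)=q^{\e(\Lambda_u)}$ is an integer power of $q$. It remains to exclude negative exponents, i.e.\ to show $\e(\Lambda_u)\ge 0$. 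Here I would use the induction hypothesis: every $v\in\supp\Lambda_u$ lies strictly below $u$, so $\e(v)=q^{n_v}$ for some $n_v\in\Nz$, and in particular $\e(v)\ge 1$. Suppose for contradiction $\e(u)=q^{-m}$ with $m\ge 1$; then among the nodes considered one would eventually reach a minimal ``bad'' node whose successor marking evaluates to a negative integer while all its successors already evaluate into $q^\Nz$ — but one can then exhibit a marking witnessing a non-integer value, or more cleanly: the minimal bad node $u$ has $\e(\Lambda_u)<0$ with all successors evaluating to positive powers of $q$, so consider the marking $M=\Lambda_u$ together with the observation that $\e(u)=q^{\e(\Lambda_u)}\notin\Z$ contradicts \refenumproperty{prop:pc_condition:mark} applied to the singleton marking $\{u\}$ (once we know, by the completed part of the induction, that $\{u\}$ is a marking whose value must be an integer). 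The cleanest phrasing is: strengthen the inductive claim to ``$\e(u)\in q^\Nz$'' directly, using that $\e(\{u\})=\e(u)\in\Z$ by \refenumproperty{prop:pc_condition:mark} and that $\e(\Lambda_u)\in\Z$ forces $\e(u)$ to be a (possibly negative) integer power of $q$, so integrality of $\e(u)$ forces $\e(\Lambda_u)\ge 0$.

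\medskip

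The only subtle point — the ``main obstacle'' — is the last implication: one must notice that applying \refenumproperty{prop:pc_condition:mark} to the singleton marking $\{u\}$ gives $\e(u)\in\Z$, and combine this with $\e(u)=q^{\e(\Lambda_u)}$ and $\e(\Lambda_u)\in\Z$ (which itself needs \refenumproperty{prop:pc_condition:mark} applied to the marking $\Lambda_u$) to conclude $\e(\Lambda_u)\ge 0$. Everything else is a routine structural induction on the acyclic graph. I would present all three implications uniformly by carrying an auxiliary claim ``$\e(u)\in q^{\Nz}$ for all nodes'' through the argument, since that claim sits between \refenumproperty{prop:pc_condition:node} and the two other conditions and makes the inductions transparent.
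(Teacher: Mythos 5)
Your argument is correct and is exactly the noetherian induction along the topological (acyclic) order that the paper invokes in its one-line proof, just written out in full; the cycle of implications and the auxiliary claim $\e(u)\in q^{\Nz}$ are fine. The only remark is that Step 3 needs no induction at all: applying \refenumproperty{prop:pc_condition:mark} directly to the marking $\Lambda_u$ and to the singleton marking at $u$ already gives $\e(\Lambda_u)\in\Z$ and $\e(u)=q^{\e(\Lambda_u)}\in\Z$, which forces $\e(\Lambda_u)\ge 0$.
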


\begin{proof}
This is easily seen by noetherian induction with respect to a topological order of $\Gamma$ (i.e., an order compatible with the edges). 
\end{proof}

\begin{definition}
A \emph{power circuit}\index{power circuit} is a finite acyclic edge-labelled graph $\Pi=(\Gamma,\delta)$ without multiple edges that meets the equivalent conditions of Lemma \ref{lem:pc_condition}. 
\end{definition}

\begin{example}
The graph in Figure \ref{fig:graph_eval} is not a power circuit due to $\e(u_5)\not\in\Z$. In contrast, Figure \ref{fig:pc_mark_eval} depicts a power circuit for $q=2$. The values of the nodes are given for illustrative purposes only. In general, these number become too large to be computed. The marking $M$ evaluates to $\e(M)=29$. 
\end{example}

\begin{figure}[ht]
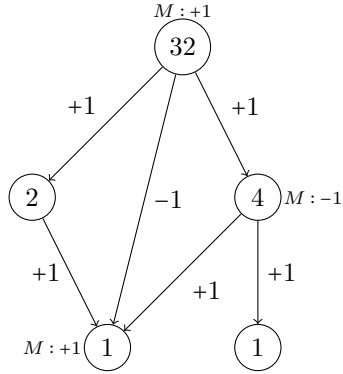

	\begin{center}
		\begin{PCpic}
			\PCnode{1}{$1$}{(0,0)}
			\PCnode{1a}{$1$}{(2,0)}
			\PCnode{2}{$2$}{(-1,2)}
			\PCnode{4}{$4$}{(2,2)}
			\PCnode{32}{$32$}{(1,4)}
			\PCedge{2}{1}{$+1$}{left}{}
			\PCedge{4}{1}{$+1$}{below right}{}
			\PCedge{4}{1a}{$+1$}{right}{}
			\PCedge{32}{1}{$-1$}{right}{}
			\PCedge{32}{2}{$+1$}{above left}{}
			\PCedge{32}{4}{$+1$}{above right}{}
			\PCmark{32}{M}{+1}{above}
			\PCmark{4}{M}{-1}{right}
			\PCmark{1}{M}{+1}{left}
		\end{PCpic}
	\end{center}
	\caption{Example of a power circuit ($q=2$)}
	\label{fig:pc_mark_eval}
\end{figure}

In Corollary \ref{cor:test_graph_pc} we will show that it can be efficiently tested whether a given graph is a power circuit. In \cite{muw11pc}, a power circuit does not have to satisfy the criteria of Lemma \ref{lem:pc_condition}, but if it does, it is called proper. In this sense, we only deal with proper power circuits. 

Figure \ref{fig:pc_tower} shows that a power circuit of linear size can contain markings with values the magnitude of the tower function. 

\begin{figure}[ht]
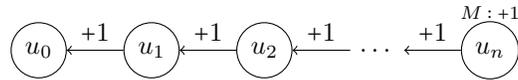

	\begin{center}
		\begin{PCpic}
			\PCnode{1}{$u_0$}{(0,0)}
			\PCnode{2}{$u_1$}{(1.5,0)}
			\PCnode{3}{$u_2$}{(3.0,0)}
			\node (4) at (4.5,0) {\ldots};
			\PCnode{5}{$u_n$}{(6.0,0)}
			\PCedge{2}{1}{$+1$}{above}{}
			\PCedge{3}{2}{$+1$}{above}{}
			\PCedge{4}{3}{$+1$}{above}{}
			\PCedge{5}{4}{$+1$}{above}{}
			\PCmark{5}{M}{+1}{above}
		\end{PCpic}\hspace{5mm}
	\end{center}
	\caption[Marking with tower function value]{Marking with value $\e(M)=\tow_q(n)$}
	\label{fig:pc_tower}
\end{figure}

%------------------------------------------------
\subsection{Arithmetic Operations}\label{sec:pc_op}

Let $\Pi=(\Gamma,\delta)$ be a power circuit and $u\in\Gamma$ a node. The operation \clone with result $v=\clone(u)$ creates a new node $v$ with the same successor marking as $u$, but no incoming arcs. We extend this operation to markings $M$, by cloning every single node in $\supp M$. The resulting marking $\clone(M)$ is defined as the marking consisting of all these clones, and the signs are copied from $M$:
\begin{align*}
	\clone(M):\Gamma\cup\{\clone(u)\>:\>u\in\supp M\}\to D;\ &\clone(u)\mapsto M(u),\cr
	&\Gamma\ni u\mapsto 0.
\end{align*}

\begin{example}
In Figure \ref{fig:pc_clone}, the marking $M$ consisting of two nodes is cloned. 
\end{example}

\begin{figure}[ht]
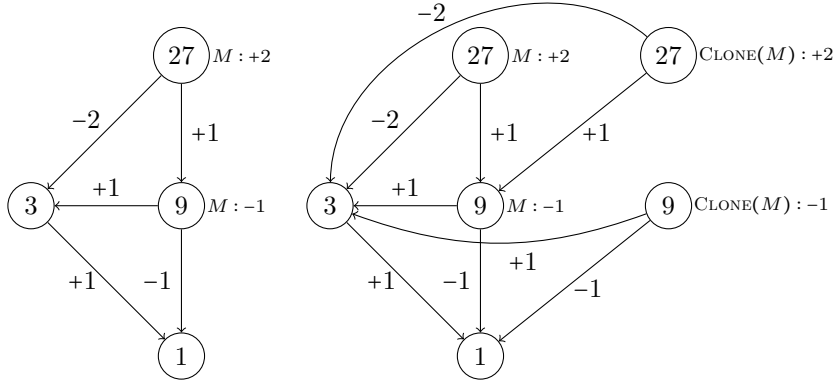

	\begin{center}
		\begin{PCpic}
			\PCnode{1}{$1$}{(0,0)}
			\PCnode{3}{$3$}{(-2,2)}
			\PCnode{9}{$9$}{(0,2)}
			\PCnode{27}{$27$}{(0,4)}
			\PCedge{3}{1}{$+1$}{left}{}
			\PCedge{9}{1}{$-1$}{left}{}
			\PCedge{9}{3}{$+1$}{above}{}
			\PCedge{27}{3}{$-2$}{above left=-0.1}{}
			\PCedge{27}{9}{$+1$}{right}{}
			\PCmark{27}{M}{+2}{right}
			\PCmark{9}{M}{-1}{right}
		\end{PCpic}
		\begin{PCpic}
			\PCnode{1}{$1$}{(0,0)}
			\PCnode{3}{$3$}{(-2,2)}
			\PCnode{9}{$9$}{(0,2)}
			\PCnode{27}{$27$}{(0,4)}
			\PCnode{9a}{$9$}{(2.5,2)}
			\PCnode{27a}{$27$}{(2.5,4)}
			\PCedge{3}{1}{$+1$}{left}{}
			\PCedge{9}{1}{$-1$}{left}{}
			\PCedge{9}{3}{$+1$}{above}{}
			\PCedge{27}{3}{$-2$}{above left=-0.1}{}
			\PCedge{27}{9}{$+1$}{right}{}
			\PCedge{9a}{1}{$-1$}{below right=-0.2cm}{}
			\PCedge{9a}{3}{$+1$}{below right}{bend left=20}
			\PCedge{27a}{3}{$-2$}{above left=-0.1}{bend right=65}
			\PCedge{27a}{9}{$+1$}{right}{}
			\PCmark{27}{M}{+2}{right}
			\PCmark{9}{M}{-1}{right}
			\PCmark{27a}{\clone(M)}{+2}{right}
			\PCmark{9a}{\clone(M)}{-1}{right}
		\end{PCpic}
	\end{center}
	\caption{Cloning a marking ($q=3$)}
	\label{fig:pc_clone}
\end{figure}

Now we can define arithmetic operations. Let $\Pi=(\Gamma,\delta)$ be a power circuit and let $K$ and $M$ be markings in $\Pi$. If the supports of $K$ and $M$ are disjoint, the mapping $K+M$ defined by $(K+M)(u)=K(u)+M(u)$ is a marking with $\e(K+M)=\e(K)+\e(M)$. In general, however, the operands $K$ and $M$ will not be disjoint. In this case we have nodes $u\in\supp K\cap\supp M$ with $K(u)+M(u)\not\in D$, hence $K+M$ is not a valid marking. We solve this problem by cloning: for every node $u$ with $K(u)+M(u)\not\in D$, we create a clone $u^\prime=\clone(u)$ and modify $K+M$ by putting $(K+M)(u):=K(u)$ and $(K+M)(u^\prime):=M(u)$. We obtain a valid marking in the (now enlarged) circuit with value $\e(K)+\e(M)$. 

\begin{example}
In Figure \ref{fig:pc_add}, $\e(K)=7$ and $\e(M)=35$ are added. In the resulting marking, the node with value $1$ cancels out, whereas both the original node with value $4$ and its newly created clone are included. 
\end{example}

\begin{figure}[ht]
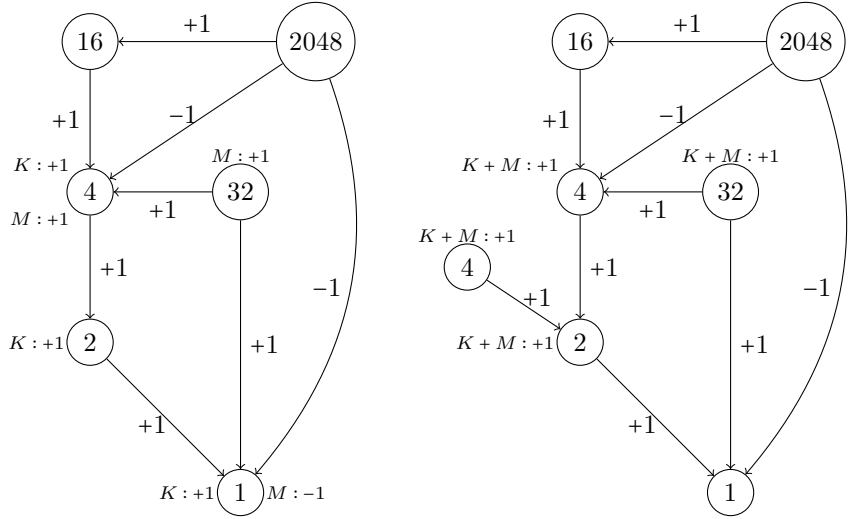

	\begin{center}
		\begin{PCpic}
			\PCnode{1}{$1$}{(0,0)}
			\PCnode{2}{$2$}{(-2,2)}
			\PCnode{4}{$4$}{(-2,4)}
			\PCnode{16}{$16$}{(-2,6)}
			\PCnode{32}{$32$}{(0,4)}
			\PCnode{2048}{$2048$}{(1,6)}
			\PCedge{2}{1}{$+1$}{below left=-0.2cm}{}
			\PCedge{4}{2}{$+1$}{right}{}
			\PCedge{16}{4}{$+1$}{left}{}
			\PCedge{32}{1}{$+1$}{right}{}
			\PCedge{32}{4}{$+1$}{below}{}
			\PCedge{2048}{1}{$-1$}{left}{bend left}
			\PCedge{2048}{4}{$-1$}{above left=-0.2cm}{}
			\PCedge{2048}{16}{$+1$}{above}{}
			\PCmark{4}{K}{+1}{above left}
			\PCmark{2}{K}{+1}{left}
			\PCmark{1}{K}{+1}{left}
			\PCmark{32}{M}{+1}{above}
			\PCmark{4}{M}{+1}{below left}
			\PCmark{1}{M}{-1}{right}
		\end{PCpic}
		\begin{PCpic}
			\PCnode{1}{$1$}{(0,0)}
			\PCnode{2}{$2$}{(-2,2)}
			\PCnode{4}{$4$}{(-2,4)}
			\PCnode{4a}{$4$}{(-3.5,3)}
			\PCnode{16}{$16$}{(-2,6)}
			\PCnode{32}{$32$}{(0,4)}
			\PCnode{2048}{$2048$}{(1,6)}
			\PCedge{2}{1}{$+1$}{below left=-0.2cm}{}
			\PCedge{4}{2}{$+1$}{right}{}
			\PCedge{4a}{2}{$+1$}{above right=-0.2}{}
			\PCedge{16}{4}{$+1$}{left}{}
			\PCedge{32}{1}{$+1$}{right}{}
			\PCedge{32}{4}{$+1$}{below}{}
			\PCedge{2048}{1}{$-1$}{left}{bend left}
			\PCedge{2048}{4}{$-1$}{above left=-0.2cm}{}
			\PCedge{2048}{16}{$+1$}{above}{}
			\PCmark{32}{K+M}{+1}{above}
			\PCmark{4}{K+M}{+1}{above left}
			\PCmark{4a}{K+M}{+1}{above}
			\PCmark{2}{K+M}{+1}{left}
		\end{PCpic}
	\end{center}
	\caption{Addition of markings ($q=2$)}
	\label{fig:pc_add}
\end{figure}

The second operation that we need is multiplication by a power of $q$. We observe that
\begin{equation*}
	\e(K)\cdot q^{\e(M)}
	=\sum_{u\in\supp K}q^{\e(\Lambda_u)}\cdot q^{\e(M)}
	=\sum_{u\in\supp K}q^{\e(\Lambda_u)+\e(M)},
\end{equation*}
so in principle we would just have to introduce new edges from each node in $\supp K$ to each node in $\supp M$. The label of such an edge would be the value that $M$ assigns to the respective target node. This operation works as long as
\begin{enumerate}
\item no cycles are introduced into the circuit,
\item no multi-edges between two nodes are introduced,
\item there are no edges between nodes in $\supp K$, and
\item no other marking in the circuit is affected. (Note here, that the original value of $K$ is lost in any case.)
\end{enumerate}

Again, the solution is cloning. Create clones $K^\prime:=\clone(K)$ and $M^\prime:=\clone(M)$ and introduce new edges by putting $\delta(u,v):=M(v)$ for all $u\in\supp K^\prime$, $v\in\supp M^\prime$. Being clones, nodes in $\supp K^\prime$ and $\supp M^\prime$ have no incoming edges, which prevents cycles and multi-edges. Also, no other marking in the circuit depends on $K^\prime$ or $M^\prime$ directly (by containing these nodes) or indirectly (by containing nodes that are topologically above any node in $K^\prime$ or $M^\prime$). An example (in which no further cloning is necessary) is shown in Figure \ref{fig:pc_mult_power}. 

\begin{figure}[ht]
	\begin{center}
		\begin{PCpic}
			\PCnode{1}{$1$}{(0,0)}
			\PCnode{2}{$2$}{(-1,2)}
			\PCnode{4}{$4$}{(1,4)}
			\PCnode{1a}{$1$}{(3,0)}
			\PCnode{2a}{$2$}{(1,2)}
			\PCnode{4a}{$4$}{(3,2)}
			\PCedge{2}{1}{$+1$}{below left=-0.2cm}{}
			\PCedge{2a}{1}{$+1$}{above left=-0.1cm}{}
			\PCedge{4}{2}{$+1$}{left}{}
			\PCedge{4a}{1}{$+1$}{above left=-0.15cm}{}
			\PCedge{4a}{1a}{$+1$}{right}{}
			\PCmark{4}{K}{+1}{above}
			\PCmark{2a}{K}{+1}{above}
			\PCmark{4a}{M}{+1}{above}
			\PCmark{1a}{M}{+1}{below}
		\end{PCpic}\hspace{5mm}
		\begin{PCpic}
			\PCnode{1}{$1$}{(0,0)}
			\PCnode{2}{$2$}{(-1,2)}
			\PCnode{4}{$4$}{(1,4)}
			\PCnode{1a}{$1$}{(3,0)}
			\PCnode{2a}{$2$}{(1,2)}
			\PCnode{4a}{$4$}{(3,2)}
			\PCedge{2}{1}{$+1$}{below left=-0.2cm}{}
			\PCedge{2a}{1}{$+1$}{above left=-0.1cm}{}
			\PCedge{4}{2}{$+1$}{left}{}
			\PCedge{4a}{1}{$+1$}{above left=-0.15cm}{}
			\PCedge{4a}{1a}{$+1$}{right}{}
			\PCedge{2a}{4a}{$+1$}{above}{}
			\PCedge{2a}{1a}{$+1$}{above right=-0.2cm}{bend left=15}
			\PCedge{4}{4a}{$+1$}{above right=-0.2cm}{}
			\PCedge{4}{1a}{$+1$}{above right=-0.2cm}{bend left=70}
			\PCmark{4}{K\cdot q^M}{+1}{above}
			\PCmark{2a}{K\cdot q^M}{+1}{above}
		\end{PCpic}
	\end{center}
	\caption[Multiplication of $\e(K)$ by $q^{\e(M)}$]{Multiplication of $\e(K)$ by $q^{\e(M)}$ ($q=2$)}
	\label{fig:pc_mult_power}
\end{figure}

Finally, note that the operation $M\mapsto -M$ which negates the value of $M$ is easy to conduct without any complications or the need for cloning. 

%------------------------------------------------
\FloatBarrier
\subsection{Reduction}

The operations $K+M$ and $K\cdot q^M$ introduced in the previous section are quite efficient. Assuming that the graph is stored using adjacency lists, the time they take depends only on the size of the markings $M$ and $K$, not on the size of the circuit. The price for this efficieny is that the structure of a power circuit can quickly become rather intransparent. In particular, it is unclear how (in)equality of the values of two markings can be determined in an arbitrary circuit. Again, note that evaluating the nodes or markings is not an option, due to the vast growth permitted by power circuits. For this reason, we restrict ourselves to a subclass of circuits and augment them with some additional data:

\begin{definition}\label{def:reduced_pc}
A \emph{reduced power circuit}\index{power circuit!reduced} is a power circuit $\Pi=(\Gamma,\delta)$ together with a list $(u_1,\ldots,u_n)$ of its nodes and a bit vector $(b_1,\ldots,b_{n-1})\in\mathbb{B}^{n-1}$ such that
\begin{enumproperties}
\item different nodes evaluate to different numbers, i.e., for all $u,v\in\Gamma$ with $u\neq v$, $\e(u)\neq\e(v)$,
\item the list of nodes is sorted by value, i.e., $\e(u_1)<\e(u_2)<\ldots<\e(u_n)$,
\item $b_i=1$ if and only if $q\cdot\e(u_i)=\e(u_{i+1})$.
\end{enumproperties}
\end{definition}

\begin{proposition}\label{prop:compare_in_red_pc}(cf. Prop. 5 in \cite{dlu12efficient} and Section 2.1 in \cite{muw11pc})
Given a reduced circuit and two markings $K$ and $M$, the values $\e(K)$ and $\e(M)$ can be compared (yielding $<$, $=$, or $>$ as the result) in $\O(\abs{\Gamma})$ time. The algorithm can also determine whether $\abs{\e(K)-\e(M)}=1$. 
\end{proposition}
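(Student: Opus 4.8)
The plan is to reduce both $K$ and $M$ to a common canonical form as subsets of the node list, after which comparison becomes a lexicographic comparison over a sorted index set. First I would observe that, in a reduced circuit, every marking $M$ whose support lies in $\Gamma$ evaluates to an integer (Lemma~\ref{lem:pc_condition}), and the sorted node list $(u_1,\ldots,u_n)$ together with the bit vector $(b_1,\ldots,b_{n-1})$ encodes the ``chains'' of consecutive nodes related by multiplication by $q$. The key combinatorial fact is a base-$q$ carry/borrow procedure: given a marking with digits in $D=\{-q+1,\ldots,q-1\}$ placed on the sorted node list, one can sweep from the smallest node to the largest, and whenever two consecutive nodes $u_i,u_{i+1}$ satisfy $q\cdot\e(u_i)=\e(u_{i+1})$ (i.e.\ $b_i=1$), a coefficient $c$ at $u_i$ can be rewritten as $c \bmod q$ at $u_i$ plus a carry $\lfloor c/q \rfloor$ (or the signed analogue) pushed onto $u_{i+1}$. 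Since the circuit is reduced, there are no two nodes of equal value, so after one left-to-right sweep every coefficient lies in $\{0,\ldots,q-1\}$ except possibly a single overflow at the top of a chain, and one shows this overflow either lands on an existing larger node (continue the sweep) or would require a new node of value $q\cdot\e(u_n)$ — but in the latter case its contribution is strictly larger in absolute value than everything below it, so it already determines the sign of the marking. The upshot is a normal form in which $\e(M)$ is encoded by a function $\Gamma\to\{0,\ldots,q-1\}$ (a genuine base-$q$ expansion along the node values), and two such normal forms represent equal integers iff they are literally equal as labelled subsets.

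Concretely, the algorithm I would write is: (1) in $\O(\abs\Gamma)$ time, spread $K$ onto the full node list (zeros elsewhere) and run the carry sweep just described, obtaining its normal form; (2) do the same for $M$; (3) compute the ``digit-wise difference'' $K-M$ coefficient by coefficient along the list — this has entries in $\{-(q-1),\ldots,q-1\}$ — and run one more carry/borrow sweep to normalize it; (4) read off the result: $\e(K)=\e(M)$ iff the normalized difference is the empty marking, and otherwise the sign of the highest-indexed nonzero coefficient gives $<$ or $>$. For the additional claim $\abs{\e(K)-\e(M)}=1$, I would simply test whether the normalized difference is $\pm u_1$, i.e.\ a single unit on the leaf of value $1$ (recall $\e(u)=1$ for every leaf, and in a reduced circuit there is exactly one leaf since distinct nodes have distinct values); equivalently, run the comparison of $K$ against $M$ shifted by the marking $\{u_1\mapsto\pm1\}$. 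Each sweep touches each of the $n=\abs\Gamma$ positions a constant number of times, handling $O(1)$-size arithmetic on digits bounded by $q$ (a constant), so the total cost is $\O(\abs\Gamma)$ as claimed.

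The main obstacle I expect is justifying the carry sweep rigorously in the presence of \emph{gaps} in the chain structure — that is, consecutive nodes $u_i,u_{i+1}$ with $b_i=0$, meaning $\e(u_{i+1})$ is strictly more than $q$ times $\e(u_i)$ (indeed possibly astronomically larger). At such a gap, a nonzero carry out of $u_i$ cannot be absorbed by $u_{i+1}$, yet it also cannot simply be dropped. The resolution is the key quantitative lemma: if after normalizing the coefficients on nodes $u_1,\ldots,u_i$ (all now in $\{0,\ldots,q-1\}$, not all zero) there is a pending carry, then the total value accumulated so far is at least $\e(u_i)$ in absolute value but strictly less than $q\cdot\e(u_i)\le\e(u_{i+1})$ when $b_i=0$ — so the sign of the entire marking is decided by whichever higher node carries the largest nonzero coefficient, and the low-order part can never overturn it. Proving this cleanly requires the geometric-series bound $\sum_{j<i}(q-1)\e(u_j) < \e(u_i)$ when the $u_j$ below $u_i$ do \emph{not} form a full $q$-chain up to $u_i$ — one needs that the values still dominate geometrically, which follows from $\e(u_{j+1})\ge q\cdot\e(u_j)$ for all $j$ (true in any power circuit, since $\e(\Lambda_{u_{j+1}})\ge 1 + \e(\Lambda_{u_j})$ is \emph{not} automatic, so one instead argues directly from $\e(u_{j+1})/\e(u_j)\in q^{\Z}$ and $\e(u_{j+1})>\e(u_j)$, forcing the ratio to be at least $q$). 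Getting this domination inequality and its use in the sign argument exactly right — including the boundary case where the carry would escape past $u_n$ — is the crux; everything else is bookkeeping.
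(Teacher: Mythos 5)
Your proposal is correct in substance but runs in the opposite direction from the paper's proof. The paper works top-down on the signed-digit expansion of the difference $M-K$ (coefficients $\delta_i\in\{-2q+2,\ldots,2q-2\}$ placed along the sorted node list): it classifies the value directly into $\le-2$, $-1$, $0$, $+1$, $\ge+2$ by inspecting the leading coefficient, absorbing an opposite-signed next coefficient via $\hat\delta_{n-1}=\delta_n\cdot q+\delta_{n-1}$ when the two nodes are chain-adjacent, and otherwise invoking the geometric domination bound $\sum_{i=0}^{n-1}(2q-2)q^i<2q^n$ to conclude that the leading digit already fixes the sign. You instead normalize bottom-up with carries to a canonical nonnegative-digit form and then compare digitwise; the two methods are dual, both cost $\O(\abs{\Gamma})$, and you correctly isolate the gap case ($b_i=0$) and the domination inequality as the crux --- this is exactly the inequality the paper's cases 2 and 3 rest on. The paper's direction has the advantage that its five-way classification answers the $\abs{\e(K)-\e(M)}=1$ question in the same single pass, whereas your normal form does not directly expose the value $-1$: for $q=2$ and a chain of values $1,2,4$, the digits $(1,1,1)$ together with a pending carry of $-1$ represent the integer $-1$, so ``the normalized difference is $\pm u_1$'' is not the right test, and your intermediate claim that a nonzero low part with a pending carry has absolute value at least $\e(u_i)$ fails for the same reason. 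Neither slip is fatal, since what your sign argument actually needs is only the upper bound $\abs{V_{\mathrm{low}}}<\e(u_{i+1})$, which does hold, and your fallback of comparing $K$ against $M$ shifted by $\pm1$ repairs the unit-difference test at the cost of extra passes. The rest checks out, including the derivation of $\e(u_{j+1})\ge q\cdot\e(u_j)$ from the fact that the ratio of two distinct node values is a power of $q$ exceeding $1$.
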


\begin{proof}
Assume that we want to determine whether for a sum $\eps=\sum_{i=0}^n\delta_i\cdot q^i$ with $\abs{\delta_i}\le 2q-2$ we have $\eps\le -2$, $\eps=-1$, $\eps=0$, $\eps=+1$ or $\eps\ge +2$. We can do this inductively using the following procedure:
\begin{enumcases}
\item If $\delta_n=0$, use induction on $\eps=\sum_{i=0}^{n-1}\delta_i\cdot q^i$. 
\item If $\abs{\delta_n}\ge 2$, then $\abs{\eps}\ge 2\cdot q^n-\sum_{i=0}^{n-1}(2q-2)\cdot q^i\ge 2$ and the sign of $\eps$ is the same as the sign of $\delta_n$. 
\item If $\abs{\delta_n}=1$, look at $\delta_{n-1}$. If $\delta_{n-1}=0$ or if it has the same sign as $\delta_n$, then $\abs{\eps}\ge q^n-\sum_{i=0}^{n-2}(2q-2)\cdot q^i=q^{n-1}(q-2)+2\ge 2$ since $q\ge 2$. Again, $\eps$ has the same sign as $\delta_n$. 
If $\delta_{n-1}$ has the opposite sign of $\delta_n$, use induction on $\eps=\hat\delta_{n-1}\cdot q^{n-1}+\sum_{i=0}^{n-2}\delta_i\cdot q^i$, where $\hat\delta_{n-1}=\delta_n\cdot q+\delta_{n-1}\in\{-2q+2,\ldots,2q-2\}$. 
\end{enumcases}

The answer to the original question can be found by applying this algorithm to the mapping $M-K:\Gamma\to\{-2q+2,\ldots,2q-2\}$ given by $(M-K)(u)=M(u)-K(u)$. Note, that the absolute indices $i$ of the $\delta_i$ are not actually needed. Instead one can use the information provided by the reduced circuit. 
\end{proof}

\begin{corollary}\label{cor:divisibility_in_red_pc}
For two markings $K$ and $M$ in a reduced circuit, it can be tested in $\O(\abs{\Gamma})$ time whether $q^{\e(K)}$ divides $\e(M)$. 
\end{corollary}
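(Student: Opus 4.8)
The plan is to reduce the divisibility test to a single comparison of marking values, exploiting the fact that in a reduced circuit the node values are pairwise distinct powers of $q$, so that the $q$-adic valuation of $\e(M)$ can be read off directly from the smallest node occurring in $M$.

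First I would record the structural observation. In a reduced circuit $\e(u_1)<\cdots<\e(u_n)$ are distinct elements of $q^{\Nz}$, say $q^{a_1}<\cdots<q^{a_n}$ with $0\le a_1<a_2<\cdots<a_n$ and $a_j=\e(\Lambda_{u_j})$. Let $M$ be a non-empty marking and let $u_j$ be the node of least index in $\supp M$. Collecting terms, $\e(M)=M(u_j)\,q^{a_j}+\sum_{k>j,\;u_k\in\supp M}M(u_k)\,q^{a_k}=q^{a_j}\bigl(M(u_j)+q\,c\bigr)$ for some $c\in\Z$, since $a_k\ge a_j+1$ for every such $k$. Because $u_j\in\supp M$ we have $0<\abs{M(u_j)}\le q-1$, so $M(u_j)$ is not divisible by $q$; hence the $q$-adic valuation of $\e(M)$ equals $a_j=\e(\Lambda_{u_j})$, and in particular $\e(M)\ne 0$ whenever $M$ is non-empty (while $\e(\emptyset)=0$). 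Consequently, $q^{\e(K)}$ divides $\e(M)$ if and only if $\supp M=\emptyset$ or $\e(K)\le\e(\Lambda_{u_j})$; this remains correct when $\e(K)\le 0$, where the condition is vacuously satisfied, because $\e(\Lambda_{u_j})\ge 0$ by Lemma \ref{lem:pc_condition}.

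The algorithm is then immediate: scan the sorted node list $(u_1,\ldots,u_n)$ and stop at the first node lying in $\supp M$, at cost $\O(\abs{\Gamma})$; if there is none, report ``divisible''. Otherwise call that node $u_j$, and compare $\e(K)$ with $\e(\Lambda_{u_j})$ via Proposition \ref{prop:compare_in_red_pc} (both are markings in the given reduced circuit), again in $\O(\abs{\Gamma})$ time; report ``divisible'' if and only if the outcome is ``$<$'' or ``$=$''. The total running time is $\O(\abs{\Gamma})$. The one slightly delicate point is the valuation computation, namely verifying that the ``carry'' $q\,c$ cannot cancel the lowest digit $M(u_j)$ — this is exactly where reducedness (strictly increasing exponents $a_j$, coming from distinct node values) combines with the digit bound $\abs{M(u_j)}\le q-1$; the remainder is bookkeeping plus a direct appeal to Proposition \ref{prop:compare_in_red_pc}.
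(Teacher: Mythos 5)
Your proof is correct and follows the same route as the paper's: locate the node $u$ of minimal value in $\supp M$, observe that $q^{\e(K)}\mid\e(M)$ reduces to $\e(K)\le\e(\Lambda_u)$, and decide that with Proposition \ref{prop:compare_in_red_pc}. You merely spell out the valuation argument (and the empty-marking and $\e(K)\le 0$ edge cases) that the paper leaves implicit.
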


\begin{proof}
Let $u$ be the node of minimal value in $\supp M$. Then $q^{\e(K)}\mid\e(M)$ if and only if $q^{\e(K)}\mid\e(u)$. Using Proposition \ref{prop:compare_in_red_pc}, we can check the equivalent condition $\e(K)\le\e(\Lambda_u)$. 
\end{proof}

Power circuits arising from a sequence of arithmetic operations are usually far from being reduced. Every cloning creates a pair of nodes with the same value. Therefore, we need an algorithm that given an arbitrary circuit produces an equivalent reduced circuit. In this context, equivalence means that for each node and each marking in the old circuit, there is one with the same value in the reduced circuit. Before giving the algorithm, we need some preparations. 

\begin{definition}\label{def:chain}
A list $u_1\ldots,u_k$ of nodes in a power circuit is called a \emph{chain}\index{power circuit!chain} (starting at $u_1$), if $q\cdot\e(u_i)=\e(u_{i+1})$ for all $1\le i<n$. It is called a maximal chain (starting at $u_1$), if it is not part of a longer chain (starting at $u_1$). 
\end{definition}

Note that chains have nothing to do with paths in the graph. In arbitrary power circuits, chains are difficult to spot. However, in a reduced power circuit, they can be easily identified using the bit vector. 

In a reduced circuit, the maximal chain starting at the unique node of value $1$ is of particular interest. It is called the \emph{base chain}\index{base chain of a power circuit} of the power circuit. For later use, we define in Algorithm \ref{alg:prolong_base_chain} a procedure \PBC prolonging this chain by one node without destroying the reducedness property of a circuit. The procedure \PBC takes $\O(\abs{\Gamma})$ time. 

\begin{algorithm}[ht]\label{alg:prolong_base_chain}
\Input{a reduced power circuit $\Pi=(\Gamma,\delta)$}
\Output{a reduced power circuit $\Pi^\prime=(\Gamma\cupi\{u\},\delta^\prime)$ which is $\Pi$ with an additional node $u$ prolonging the base chain of $\Pi$}
\BlankLine
Let $\Gamma=(v_0,\ldots,v_n)$ be the ordered list of the nodes of the reduced circuit $\Gamma$. Using this list and the bit vector, find the smallest $i\ge 0$ such that $\e(v_i)>q^i$.\;
Write $i$ as a $q$-ary number $i=\sum_{\ell=0}^{i-1}\alpha_\ell\cdot q^\ell$ and use this to define the marking $M(v_\ell)=\alpha_\ell$ with value $\e(M)=i$. 
Insert a new node $u$ with $\Lambda_u=M$ into the circuit.\;
Place $u$ in the ordered list of nodes between $v_{i-1}$ and $v_i$.\;
Check whether $q\cdot\e(u)=\e(v_i)$ by applying Proposition \ref{prop:compare_in_red_pc} to $\Lambda_u$ and $\Lambda_{v_i}$ (both are contained in the reduced circuit $\Pi$). Set the bit vector for $u$ accordingly.\;
\caption{Procedure \PBC}
\end{algorithm}

Now we can give an algorithm that reduces power circuits. Reduction is done node by node. This means that at any point during the reduction procedure, the circuit consists of a reduced part and a part that is not yet reduced. The nodes in the non-reduced part are processed in topological order. In this way, the procedure only has to work for nodes all of whose successors are already in the reduced part. 

This approach allows us to generalize the reduction procedure. Instead of reducing the entire circuit, we can take into account that parts of it might already be reduced. This will turn out to be useful in applications. The procedure \ERed described in Algorithm \ref{alg:extend_reduce} takes as input not only the power circuit but also a list $\mathcal{M}$ of markings that need to be adjusted during reduction in order to preserve their value. 

\begin{algorithm}\label{alg:extend_reduce}
\Input{a graph $\Pi=(\Gamma\cupi U,\delta)$ such that $(\Gamma,\delta\vert_{\Gamma\times\Gamma})$ is a reduced power circuit, a list $\mathcal{M}=(M_1,\ldots M_m)$ of markings in $\Pi$}
\Output{a reduced power circuit $\Pi^\prime=(\Gamma^\prime,\delta^\prime)$ with $\Gamma\subseteq\Gamma^\prime$ and $\delta^\prime\vert_{\Gamma\times\Gamma}=\delta\vert_{\Gamma\times\Gamma}$, a list $\mathcal{M}^\prime=(M_1^\prime,\ldots,M_m^\prime)$ of markings in $\Pi^\prime$ such that $\e(M_i)=\e(M_i^\prime)$}
\BlankLine
Compute a topological order of $U$, i.e., $U=(u_1,\ldots,u_k)$ such that $\delta(u_i,u_j)\neq 0$ implies $i>j$.\;
\For{$i=1,\ldots,k$}{\label{alg:extend_reduce:main_loop}
$U:=U\setminus\{u_i\}$\;
If $\Gamma=\emptyset$, set $\Gamma:=\{u_1\}$ (a circuit with just one node is obviously reduced) and continue with the iteration $i=2$.\;
Let $\Gamma=(v_1,v_2,\ldots)$ be the ordered list of the nodes of the reduced circuit $\Gamma$. Using binary search, find the minimal $j$ such that $\e(u_i)\le\e(v_j)$. Comparing $u_i$ to some $v_j$ is done by comparing $\Lambda_{u_i}$ to $\Lambda_{v_j}$. Both markings are in the reduced part, so Proposition \ref{prop:compare_in_red_pc} applies.\;\label{alg:extend_reduce:bin_search}
\lIf{$\e(\Lambda_{u_i})<0=\e(\Lambda_{v_1})$}{the graph $\Pi$ is not a power circuit; abort the algorithm.}\;\label{alg:extend_reduce:abort}
\eIf{$\e(u_i)<\e(v_j)$ (or no such $v_j$ exists)}{\label{alg:extend_reduce:no_collision}
$\Gamma:=\Gamma\cup\{u_i\}$\;
Insert $u_i$ into $\Gamma$'s sorted list of nodes between $v_{j-1}$ and $v_j$.\;
Set the bit vector for $u_i$ according to whether $\e(\Lambda_{u_i})+1=\e(\Lambda_{v_j})$.\;}
({\ $\e(u_i)=\e(v_j)$}){\label{alg:extend_reduce:collision}
Find the last node $v_k$ of the maximal chain starting at $v_j$ and create $v:=\clone(v_k)$.\;
Multiply the value of $v$ by $q$ by adding $1$ to $\Lambda_{v}$: Let $v_\ell$ be the first node in the base chain with $\Lambda_v(v_\ell)<q-1$. If such $v_\ell$ does not exist, call \PBC to create it. Set $\Lambda_v(v_1)=\ldots=\Lambda_v(v_{\ell-1})=0$ and increment $M(v_\ell)$ by one.\;
Insert $v$ in the ordered list after $v_k$ and set the bit vector for $v$ by comparing $\Lambda_v$ to $\Lambda_{v_{k+1}}$.\;
\ForEach{$M\in\{\supp\Lambda_u\>:\>u\in U\}\cup\mathcal{M}$ with $u_i\in\supp M$}{\label{alg:extend_reduce:adjust_markings}Replace $u_i$ in $M$ by $v_j$, i.e., set $M(v_j):=M(v_j)+M(u_i)$ and $M(u_i):=0$. If now $M(v_j)=\alpha\not\in D$, write $\alpha=\beta\cdot q+\gamma$ with $\gamma\in D$, set $M(v_j):=\gamma$ and add $\beta$ to $M(v_{j+1})$. If again $M(v_{j+1})\not\in D$, repeat. This terminates at the latest at the newly created node $v$ which is not marked by $M$.\;}}}
\caption{Procedure \ERed}
\end{algorithm}

\begin{proposition}\label{prop:extend_reduce}(cf. \cite{dlu12efficient}, Thm. 6)
The procedure \ERed is correct and takes $\sO\left((\abs{\Gamma}+\abs{U})\cdot(\abs{U}+m)\right)$ time. The circuit growth $\abs{\Gamma^\prime\setminus\Gamma}$ is bounded by $2\abs{U}$. 
\end{proposition}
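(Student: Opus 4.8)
The plan is to verify correctness by establishing, as a loop invariant for the main \texttt{for} loop in line~\ref{alg:extend_reduce:main_loop}, that after iteration $i$ the pair $(\Gamma,\delta|_{\Gamma\times\Gamma})$ is a reduced power circuit (in the sense of Definition~\ref{def:reduced_pc}: distinct values, sorted node list, correct bit vector), that the markings $\Lambda_u$ for the remaining $u\in U$ and the $M_j\in\mathcal M$ still evaluate to their original values, and that acyclicity is preserved. The base case is the singleton circuit, which is trivially reduced. For the inductive step there are two cases matching the \texttt{eIf}. In the no-collision case (line~\ref{alg:extend_reduce:no_collision}) $\e(u_i)$ differs from every existing node value, so inserting $u_i$ at the position found by binary search keeps the list sorted; the only bits that can change are the one for $u_i$ itself and the one for $v_{j-1}$, and both are set correctly because all successor markings lie in the already-reduced part, so Proposition~\ref{prop:compare_in_red_pc} tells us exactly whether $\e(\Lambda_{u_i})+1$ equals $\e(\Lambda_{v_j})$ resp.\ whether $\e(\Lambda_{v_{j-1}})+1=\e(\Lambda_{u_i})$. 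In the collision case (line~\ref{alg:extend_reduce:collision}) the key point is that $\e(u_i)=\e(v_j)$ already exists, so $u_i$ must be eliminated: every marking containing $u_i$ has $u_i$ replaced by $v_j$ with a carry propagated up the base chain, which preserves the value by the identity $\e(v_j)\cdot q = \e(v_{j+1})$ along a chain; and since the chain starting at $v_j$ may be finite, a fresh clone $v$ of its last node, with value multiplied by $q$, is added at the top to absorb the final carry — this is why $v$ is guaranteed not to be marked by any $M$, terminating the carry. One must also check that multiplying $v$'s value by $q$ via incrementing $\Lambda_v$ on the base chain is itself value-correct and may require one call to \PBC, which by assumption preserves reducedness.

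For the complexity bound I would first bound the circuit growth. Each of the $k=\abs U$ iterations adds at most one node $u_i$ (no-collision) or at most one clone $v$ plus possibly one \PBC-node (collision), but \PBC is called at most... more carefully: across the whole run, each collision adds one clone, and a \PBC call is triggered only when the base chain is "full" up to the needed length, which can happen at most once per collision as well, giving $\abs{\Gamma'\setminus\Gamma}\le 2\abs U$. (I would double-check whether \PBC can be charged more tightly, but $2\abs U$ suffices.) For the running time, the topological sort is $\O((\abs\Gamma+\abs U)^2)$ or better; each iteration performs one binary search costing $\O(\log(\abs\Gamma+\abs U))$ comparisons, each comparison being an $\O(\abs\Gamma+\abs U)$ call to Proposition~\ref{prop:compare_in_red_pc}; the clone and \PBC cost $\O(\abs\Gamma+\abs U)$; and the marking-adjustment loop in line~\ref{alg:extend_reduce:adjust_markings} ranges over the $\le m$ markings in $\mathcal M$ plus the $\le\abs U$ successor markings $\Lambda_u$ of not-yet-processed nodes, each adjustment being a carry chain of length $\O(\abs\Gamma+\abs U)$. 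Summing over $i=1,\dots,k$ gives $\sO\left((\abs\Gamma+\abs U)\cdot(\abs U+m)\right)$, where the $\sO$ absorbs the logarithmic factor from binary search.

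The main obstacle I expect is the collision case, specifically proving that the carry-propagation in line~\ref{alg:extend_reduce:adjust_markings} both terminates and preserves every marking's value simultaneously, together with keeping the reduced-circuit invariant intact after inserting the clone $v$ and possibly calling \PBC. One has to argue that after replacing $u_i$ by $v_j$ the carries march strictly upward along a genuine chain whose bit vector already certifies the $q\cdot\e(u_\ell)=\e(u_{\ell+1})$ relations, that the freshly created top node $v$ sits correctly in the sorted order with a correctly computed bit, and that no previously-correct bit (other than the ones explicitly touched) becomes stale — in particular that no marking outside $\mathcal M$ and outside the $\Lambda_u$ for $u\in U$ is affected, which holds because those markings either already lived in the reduced part (where $u_i\notin\Gamma$) or have been processed. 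A secondary subtlety is the abort condition in line~\ref{alg:extend_reduce:abort}: one must confirm that $\e(\Lambda_{u_i})<0$ is detected correctly (again via Proposition~\ref{prop:compare_in_red_pc}, comparing $\Lambda_{u_i}$ with the empty marking) and that this is exactly the obstruction to the input graph being a power circuit, by Lemma~\ref{lem:pc_condition}\refenumproperty{prop:pc_condition:succ}.
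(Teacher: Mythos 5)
Your overall structure (loop invariant for reducedness, case split on collision versus no collision, growth accounting of one clone plus at most one \PBC{} node per collision) matches the paper's proof, and the correctness discussion is sound. However, there is a genuine gap in the time analysis of the carry propagation in line~\refenumalg{alg:extend_reduce:adjust_markings}. You bound one iteration of the main loop by ``$\le m$ markings in $\mathcal M$ plus $\le\abs U$ successor markings, each adjustment being a carry chain of length $\O(\abs\Gamma+\abs U)$,'' i.e.\ $\O\bigl((\abs\Gamma+\abs U)\cdot(\abs U+m)\bigr)$ \emph{per iteration}, and then claim that summing over the $k=\abs U$ iterations yields $\sO\bigl((\abs\Gamma+\abs U)\cdot(\abs U+m)\bigr)$. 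That summation is off by a factor of $\abs U$: the naive per-iteration worst case accumulates to $\O\bigl((\abs\Gamma+\abs U)\cdot(\abs U+m)\cdot\abs U\bigr)$, which does not prove the stated bound.

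The paper closes exactly this gap with a global amortization argument rather than a per-iteration bound: every single carry step strictly decreases the quantity $C:=\sum_{M}\sum_{v}\abs{M(v)}$ (summed over all markings in $\mathcal M$ and all successor markings of nodes still in $U$), because writing $\alpha=\beta q+\gamma$ with $q\ge 2$ gives $\abs\beta+\abs\gamma<\abs\alpha$. Since initially $C\le(q-1)\cdot(\abs\Gamma+\abs U)\cdot(\abs U+m)$, the \emph{total} number of carry steps over the entire run of \ERed{} is $\O\bigl((\abs\Gamma+\abs U)\cdot(\abs U+m)\bigr)$, independent of how the carries are distributed among iterations. Without this (or an equivalent potential-function argument) your analysis does not establish the claimed complexity; everything else in your proposal is consistent with the paper's proof.
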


\begin{proof}
At first, a topological order is computed. The time for this is bounded by the size of the subgraph $U$ (nodes and edges), which is $\O(\abs{U}^2)$. In the main loop starting at line \ref{alg:extend_reduce:main_loop}, the nodes are eliminated from $U$ one by one. Let $n=\abs{\Gamma}+\abs{U}$ be the initial size of the whole graph. Since $\Gamma$ grows by $\O(\abs{U})$ during the procedure (although we keep calling it $\Gamma$ for convenience), $\O(n)$ is the correct bound for the size of $\Gamma$. 

For each node $u_i\in U$, its position in the ordering of $\Gamma$ has to be found in step \ref{alg:extend_reduce:bin_search}. Since $u_j$ is chosen to be topologically minimal, the successor marking $\Lambda_u$ is contained in the reduced circuit $\Gamma$, so $u$ can be compared to any node $v\in\Gamma$ in $\O(n)$ time. Using binary search, $\O(\log n)$ comparisons are sufficient, taking $\sO(n\cdot\abs{U})$ time in total. 

For the insertion of $u_i$ in $\Gamma$, we distiguish two cases. In the first one (step \ref{alg:extend_reduce:no_collision}), there is no node in $\Gamma$ with the same value as $u_i$. In this case, $u_i$ is moved from $U$ to $\Gamma$ without any modification. Markings containing $u_i$ (including successor markings, i.e., edges with target $u_i$) are not affected either. 

The second case (step \ref{alg:extend_reduce:collision}), where there is a node $v_j$ with the same value as $u_i$ is more difficult. Figure \ref{fig:pc_reduction} shows an example. The idea is to delete $u_i$ and replace it in all markings $M$ (both markings from $M$ and successor markings of nodes in $U$) by $v_j$. This may cause $v_j$ to by ``overmarked'' by $M$, i.e., $M(v_j)\not\in D$. For example, in the simplest case $q=2$, if $M(v_j)=M(u_i)=1$, then $M(v_j)=2$ after the replacement. The solution is inspired by the idea of carry digits used when adding two $q$-ary numbers: if $\abs{M(v_j)}\ge q$, subtract the appropriate number $\alpha\cdot q$ and add $\alpha$ to the value that $M$ assigns to the next node $v_{j+1}$ in the chain, which has $q$ times the value of $v_j$. The carry might propagate to the end of the chain, which is why we preventively prolonged it by one node $v_{k+1}$. 

Note that the time bound for one execution of step \ref{alg:extend_reduce:adjust_markings} is not $\sO(\abs{\Gamma})$, but rather $\O(\abs{\Gamma}\cdot\#\text{of markings})$. Since this is not sufficient to prove the claimed bound, instead we count the total amount of time spent in step \ref{alg:extend_reduce:adjust_markings} during the whole procedure. The key observation is that for every carry that has to be moved to the next node in the chain, the number $C:=\sum_{M}\sum_{v\in\Gamma\cup U}\abs{M(v)}$ decreases. Initially $C\le (q-1)\cdot\left(n\cdot(\abs{U}+m)\right)$, so the total time complexity of step \ref{alg:extend_reduce:adjust_markings} is $\O(n\cdot(\abs{U}+m))$. 
\end{proof}

\begin{figure}[ht]
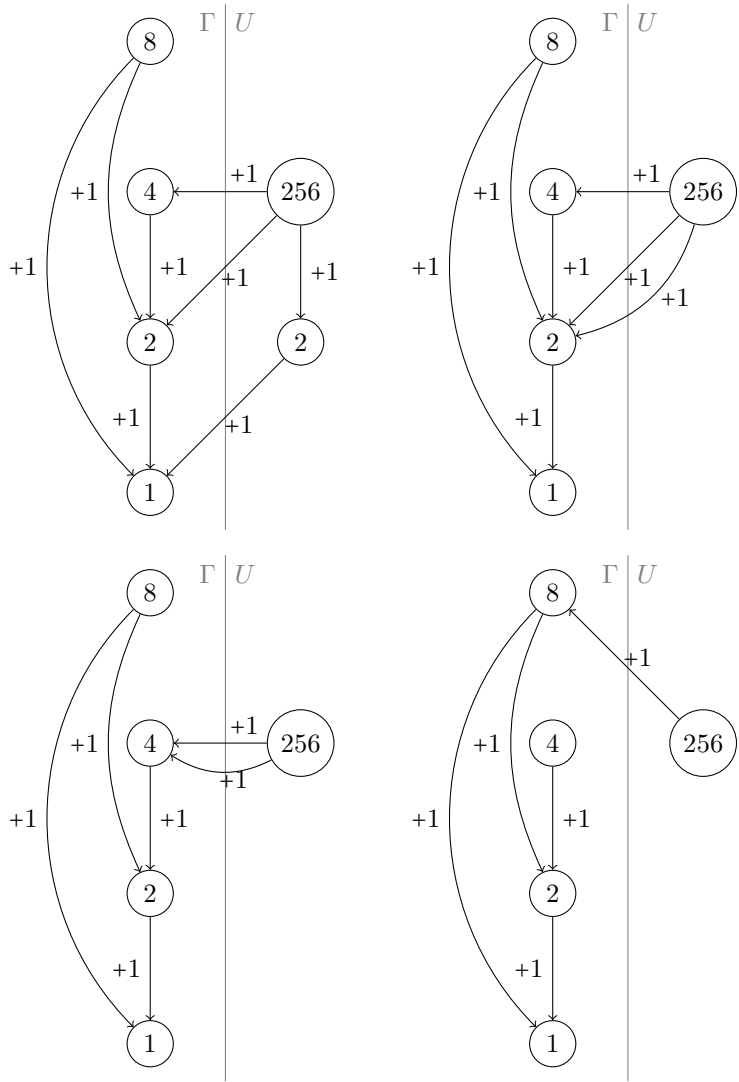

	\begin{center}
		\begin{PCpic}
			\path[use as bounding box] (-2,-0.5) rectangle (3,6.5);
			\draw[gray] (1,-0.5) -- (1,6.5);
			\node[gray,anchor=north east] at (1,6.5) {$\Gamma$};
			\node[gray,anchor=north west] at (1,6.5) {$U$};
			\PCnode{1}{$1$}{(0,0)}
			\PCnode{2}{$2$}{(0,2)}
			\PCnode{4}{$4$}{(0,4)}
			\PCnode{8}{$8$}{(0,6)}
			\PCnode{2a}{$2$}{(2,2)}
			\PCnode{256}{$256$}{(2,4)}
			\PCedge{2}{1}{$+1$}{left}{}
			\PCedge{4}{2}{$+1$}{right}{}
			\PCedge{8}{1}{$+1$}{left}{bend right=45}
			\PCedge{8}{2}{$+1$}{left}{bend right=25}
			\PCedge{2a}{1}{$+1$}{below right=-0.2cm}{}
			\PCedge{256}{2a}{$+1$}{right}{}
			\PCedge{256}{2}{$+1$}{below right=-0.2cm}{}
			\PCedge{256}{4}{$+1$}{above right}{}
		\end{PCpic}
		\begin{PCpic}
			\path[use as bounding box] (-2,-0.5) rectangle (3,6.5);
			\draw[gray] (1,-0.5) -- (1,6.5);
			\node[gray,anchor=north east] at (1,6.5) {$\Gamma$};
			\node[gray,anchor=north west] at (1,6.5) {$U$};
			\PCnode{1}{$1$}{(0,0)}
			\PCnode{2}{$2$}{(0,2)}
			\PCnode{4}{$4$}{(0,4)}
			\PCnode{8}{$8$}{(0,6)}
			\PCnode{256}{$256$}{(2,4)}
			\PCedge{2}{1}{$+1$}{left}{}
			\PCedge{4}{2}{$+1$}{right}{}
			\PCedge{8}{1}{$+1$}{left}{bend right=45}
			\PCedge{8}{2}{$+1$}{left}{bend right=25}
			\PCedge{256}{2}{$+1$}{right}{bend left}
			\PCedge{256}{2}{$+1$}{below right=-0.2cm}{}
			\PCedge{256}{4}{$+1$}{above right}{}
		\end{PCpic}\\
		\vspace{3mm}
		\begin{PCpic}
			\path[use as bounding box] (-2,-0.5) rectangle (3,6.5);
			\draw[gray] (1,-0.5) -- (1,6.5);
			\node[gray,anchor=north east] at (1,6.5) {$\Gamma$};
			\node[gray,anchor=north west] at (1,6.5) {$U$};
			\PCnode{1}{$1$}{(0,0)}
			\PCnode{2}{$2$}{(0,2)}
			\PCnode{4}{$4$}{(0,4)}
			\PCnode{8}{$8$}{(0,6)}
			\PCnode{256}{$256$}{(2,4)}
			\PCedge{2}{1}{$+1$}{left}{}
			\PCedge{4}{2}{$+1$}{right}{}
			\PCedge{8}{1}{$+1$}{left}{bend right=45}
			\PCedge{8}{2}{$+1$}{left}{bend right=25}
			\PCedge{256}{4}{$+1$}{below right=-0.2cm}{bend left}
			\PCedge{256}{4}{$+1$}{above right}{}
		\end{PCpic}
		\begin{PCpic}
			\path[use as bounding box] (-2,-0.5) rectangle (3,6.5);
			\draw[gray] (1,-0.5) -- (1,6.5);
			\node[gray,anchor=north east] at (1,6.5) {$\Gamma$};
			\node[gray,anchor=north west] at (1,6.5) {$U$};
			\PCnode{1}{$1$}{(0,0)}
			\PCnode{2}{$2$}{(0,2)}
			\PCnode{4}{$4$}{(0,4)}
			\PCnode{8}{$8$}{(0,6)}
			\PCnode{256}{$256$}{(2,4)}
			\PCedge{2}{1}{$+1$}{left}{}
			\PCedge{4}{2}{$+1$}{right}{}
			\PCedge{8}{1}{$+1$}{left}{bend right=45}
			\PCedge{8}{2}{$+1$}{left}{bend right=25}
			\PCedge{256}{8}{$+1$}{above right=-0.2}{}
		\end{PCpic}
	\end{center}
	\caption[Reduction step]{Reduction step ($q=2$)}
	\label{fig:pc_reduction}
\end{figure}

\begin{remark}
Not all markings need to be included in $\mathcal{M}$. Since $\Pi$ remains a subcircuit of $\Pi^\prime$ and the values of nodes in $\Gamma$ do not change, all markings whose support is completely contained in $\Gamma$ are automatically preserved. Only markings using nodes in $U$ have to be put into $\mathcal{M}$. In most applications, $\mathcal{M}$ consists only of a constant number of markings. 
\end{remark}

\begin{remark}
The bound for the circuit growth given in Proposition \ref{prop:extend_reduce} is a rather crude one. A more detailed analysis shows that calling \PBC is only necessary once every time $\abs{\Gamma}$ grows by a factor $q$. If one does some ``cleaning up'' in the circuit (for instance delete unmarked nodes with no incoming edges), \cite{muw11pc} shows that the growth during reduction is even bounded by $1$. However, this bound is of no importance in our applications since cloning during arithmetic operations increases the size by $\O(\abs{U})$ anyway. 

In practice, the circuit size rarely ever increases at all during reduction. Usually, the cicuit even shrinks. 
\end{remark}

\begin{theorem}\label{thm:reduce}(cf. \cite{dlu12efficient}, Cor. 7)
There is a procedure \Red which given a power circuit $\Pi=(\Gamma,\delta)$ and a list $\mathcal{M}=(M_1,\ldots,M_m)$ of markings in $\Pi$, returns a reduced circuit $\Pi^\prime=(\Gamma^\prime,\delta^\prime)$ and a list $\mathcal{M}^\prime=(M_1^\prime,\ldots,M_m^\prime)$ of markings in $\Pi^\prime$ such that $\e(M_i)=\e(M_i^\prime)$ ($1\le i\le m$). \Red takes $\sO(\abs{\Gamma}^2+\abs{\Gamma}\cdot m)$ time and the size of $\Gamma^\prime$ is bounded by $2\abs{\Gamma}$. 
\end{theorem}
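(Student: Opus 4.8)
The plan is to obtain \Red as the special case of \ERed in which the already-reduced part is empty. Given a power circuit $\Pi=(\Gamma,\delta)$ and a list $\mathcal{M}=(M_1,\ldots,M_m)$ of markings, I would view $\Pi$ as the graph $(\emptyset\cupi\Gamma,\delta)$, i.e.\ take the (vacuously) reduced subcircuit to be $\emptyset$ and set $U:=\Gamma$, and then define $\Red(\Pi,\mathcal{M})$ to be the call $\ERed\bigl((\emptyset\cupi\Gamma,\delta),\mathcal{M}\bigr)$. Here the whole list $\mathcal{M}$ must be handed over, because every marking $M_i$ has its support inside $U=\Gamma$ and so could be affected by the reduction.

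Correctness is then immediate from Proposition \ref{prop:extend_reduce}: \ERed returns a reduced power circuit $\Pi^\prime=(\Gamma^\prime,\delta^\prime)$ together with markings $\mathcal{M}^\prime=(M_1^\prime,\ldots,M_m^\prime)$ satisfying $\e(M_i)=\e(M_i^\prime)$ for all $i$. The slightly degenerate first iteration of the main loop, where $\Gamma=\emptyset$ forces the initialization $\Gamma:=\{u_1\}$ before any collision handling takes place, is already built into Algorithm \ref{alg:extend_reduce}. Since the input is assumed to be a genuine power circuit, the abort in line \ref{alg:extend_reduce:abort} never fires; one could of course retain that check, which would turn \Red into a combined test-and-reduce procedure.

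For the resource bounds I would simply substitute into Proposition \ref{prop:extend_reduce} with the reduced part of size $0$ and $\abs{U}=\abs{\Gamma}$. The running time $\sO\bigl((0+\abs{\Gamma})\cdot(\abs{\Gamma}+m)\bigr)$ collapses to $\sO(\abs{\Gamma}^2+\abs{\Gamma}\cdot m)$, as claimed. For the size bound, Proposition \ref{prop:extend_reduce} gives circuit growth $\abs{\Gamma^\prime\setminus\Gamma_{\text{red}}}\le 2\abs{U}$; with an empty reduced part this reads $\abs{\Gamma^\prime}=\abs{\Gamma^\prime\setminus\emptyset}\le 2\abs{\Gamma}$, and in particular this bound already absorbs any calls to \PBC made during the run.

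In short, there is essentially no real obstacle here: all of the substantive work — maintaining the sorted node list and the bit vector, resolving value collisions by cloning the top of a chain and incrementing via carries, prolonging the base chain when needed, and amortizing the cost of the marking updates against the quantity $C=\sum_M\sum_v\abs{M(v)}$ — has already been carried out inside Algorithm \ref{alg:extend_reduce} and Proposition \ref{prop:extend_reduce}. The only points that deserve an explicit word are that the empty power circuit counts as reduced and that passing the entire list $\mathcal{M}$ is both necessary and sufficient to preserve all marking values.
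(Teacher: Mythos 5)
Your proposal is correct and matches the paper's own proof, which likewise obtains \Red by invoking \ERed with $\emptyset$ as the reduced part and the whole circuit as $U$, the bounds following by substitution into Proposition \ref{prop:extend_reduce}. The extra remarks about passing the full list $\mathcal{M}$ and the degenerate first iteration are sound elaborations of the same argument.
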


\begin{proof}
Invoke \ERed taking $\emptyset$ as the reduced part and the whole circuit as $U$. 
\end{proof}

Step \ref{alg:extend_reduce:abort} in \ERed tests whether $\e(\Lambda_u)\ge 0$. This is one of the equivalent conditions specified in Lemma \ref{lem:pc_condition} for a graph to be a power circuit. Therefore, reduction is at the same time a test whether a graph is a power circuit:

\begin{corollary}\label{cor:test_graph_pc}(\cite{dlu12efficient}, Cor. 8)
Given a dag $\Pi=(\Gamma,\delta)$ it can be determined in $\sO(\abs{\Gamma}^2)$ time whether $\Pi$ is a power circuit. \qed
\end{corollary}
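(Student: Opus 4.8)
\emph{Proof proposal.} The plan is to observe that the reduction procedure already carries out the power-circuit test for free, so the corollary follows from a single call to \Red. Concretely, I would invoke \Red of Theorem~\ref{thm:reduce} on the input dag $\Pi=(\Gamma,\delta)$ with the empty list of markings ($m=0$); by that theorem this runs in $\sO(\abs{\Gamma}^2)$ time, matching the claimed bound. (Acyclicity and absence of multi-edges are already guaranteed by the hypothesis that $\Pi$ is a dag presented via $\delta:\Gamma\times\Gamma\to D$, so only the condition of Lemma~\ref{lem:pc_condition} is in question.) It then remains to argue that \Red halts normally exactly when $\Pi$ is a power circuit, and aborts otherwise.

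The key point concerns the inner routine \ERed (Algorithm~\ref{alg:extend_reduce}), which \Red invokes with $\Gamma:=\emptyset$ and $U:=\Gamma$. It processes the nodes of $U$ in topological order while maintaining the invariant that the part already moved into the ``reduced side'' is a genuine reduced power circuit. When a node $u_i$ is picked, all of its out-neighbours already lie on the reduced side, so $\e(\Lambda_{u_i})=\sum_v\delta(u_i,v)\e(v)$ is a well-defined \emph{integer} (each $\e(v)\in q^{\Nz}\subseteq\Z$), and it can be compared against $0$ and against the successor markings of reduced nodes via Proposition~\ref{prop:compare_in_red_pc}, since every marking involved lives on the reduced side. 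Now $u_i$ is a legitimate power-circuit node, i.e.\ $\e(u_i)=q^{\e(\Lambda_{u_i})}\in q^{\Nz}$, if and only if $\e(\Lambda_{u_i})\ge 0$ --- and this is precisely the test performed in line~\ref{alg:extend_reduce:abort} (using that the minimal node $v_1$ of any reduced circuit has value $1$, hence $\e(\Lambda_{v_1})=0$). Therefore, if $\Pi$ is a power circuit, then by Lemma~\ref{lem:pc_condition}\refenumproperty{prop:pc_condition:succ} the abort is never triggered and \ERed runs to completion; conversely, if $\Pi$ is not a power circuit, pick a topologically minimal node violating $\e(u)\in q^{\Nz}$: every earlier node passed the test, so the reduced side really is a power circuit at that moment, and the abort fires at the latest when this node is processed. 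By noetherian induction along the topological order this establishes the equivalence: completion of \Red witnesses condition~\refenumproperty{prop:pc_condition:node} of Lemma~\ref{lem:pc_condition}, and an abort witnesses its failure.

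I do not expect a serious obstacle here; the one point that needs care is the \emph{well-definedness} of every sub-step of \ERed on an input that is not (yet known to be) a power circuit --- the binary search of line~\ref{alg:extend_reduce:bin_search}, the chain prolongation \PBC, the insertions, and the carry propagation in line~\ref{alg:extend_reduce:adjust_markings}. The resolution is that each of these only ever touches the reduced side, which by the invariant above is always a bona fide reduced power circuit, so Proposition~\ref{prop:compare_in_red_pc} and Algorithm~\ref{alg:prolong_base_chain} remain applicable throughout; the topological processing order is exactly what guarantees this and what makes the induction go through. Hence no work beyond the single call to \Red is required.
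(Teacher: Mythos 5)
Your proposal is correct and follows exactly the paper's route: the paper derives the corollary from the observation that the abort test in step~\ref{alg:extend_reduce:abort} of \ERed checks condition \refenumproperty{prop:pc_condition:succ} of Lemma~\ref{lem:pc_condition}, so a single call to \Red within its $\sO(\abs{\Gamma}^2)$ bound doubles as the power-circuit test. Your added detail (the invariant that the already-processed side is a genuine reduced power circuit, and the induction along the topological order) is a faithful elaboration of what the paper leaves implicit.
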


%------------------------------------------------
\subsection{Compactness}\label{sec:compactness}

In this section, we will show that using a richer data structure for reduced circuits, the time complexity of \ERed can be reduced to $\O((\abs{\Gamma}+\abs{U})\cdot\abs{U})$. This eliminates the logarithmic factors both for \Red and for the test presented in Corollary \ref{cor:test_graph_pc}. We start by taking a closer look at power sums. 

% - - - - - - - - - - - - - - - - - - - - - - - -
\subsubsection{Compact power sums}\label{sec:compact_power_sums}

A power sum\index{power sum} is a formal sum $S=\sum_{i\ge 0}\alpha_i\cdot q^i$ whose coefficients $\alpha_i$ are in $D$ and only finitely many of them are non-zero. The value $\e(S)\in\Z$ of the power sum $S$ is defined in the obvious way. On the set of all power sums, we define a rewriting system $\mathcal{P}$ generated by the rules
\begin{align*}
	(1)\quad\alpha\cdot q^i+\beta\cdot q^{i+1}&\longrightarrow(\alpha-q)\cdot q^i+(\beta+1)\cdot q^{i+1}&\text{for}\ \alpha>0,\,\beta<0,\cr
	(2)\quad\alpha\cdot q^i+\beta\cdot q^{i+1}&\longrightarrow(\alpha+q)\cdot q^i+(\beta-1)\cdot q^{i+1}&\text{for}\ \alpha<0,\,\beta>0,
\end{align*}
and for $i<j$
\begin{align*}
	(3)\quad&\alpha\cdot q^i+(q-1)\cdot(q^{i+1}+\ldots+q^j)+\beta\cdot q^{j+1}\cr
	&\qquad\longrightarrow(\alpha-q)\cdot q^i+(\beta+1)\cdot q^{j+1}&\text{for}\ \alpha>0,\,\beta<q-1,\cr
	(4)\quad&\alpha\cdot q^i+(-q+1)\cdot(q^{i+1}+\ldots+q^j)+\beta\cdot q^{j+1}\cr
	&\qquad\longrightarrow(\alpha+q)\cdot q^i+(\beta-1)\cdot q^{j+1}&\text{for}\ \alpha<0,\,\beta>-q+1.
\end{align*}
None of these rules changes the value of the power sum. We omit the proof for the following lemma, since it consists of a long but simple enumeration of cases. 

\begin{lemma}\label{lem:power_sum_system_confluent}
The rewriting system $\mathcal{P}$ is locally confluent. \qed
\end{lemma}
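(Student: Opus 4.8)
The plan is to verify local confluence of $\mathcal{P}$ by examining all critical pairs, i.e.\ all ways two rules can overlap on a common power sum. A crucial preliminary observation is that each rule only reads and rewrites the coefficients at two index positions (for rules (1) and (2)) or along a block of consecutive indices with a specified "all $\pm(q-1)$" interior (for rules (3) and (4)); everywhere else the coefficients are untouched. Hence two redexes either touch disjoint sets of indices — in which case the rules commute trivially and the critical pair closes in one step each side — or their index sets overlap, and only finitely many overlap patterns are possible up to the obvious symmetry (the map $\alpha_i \mapsto -\alpha_i$, which swaps rule (1) with (2) and rule (3) with (4)). So the whole argument reduces to a bounded case analysis.

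Concretely, I would organize the overlaps as follows. First, rule (1) vs.\ rule (1) (and symmetrically (2) vs.\ (2)): two instances at positions $(i,i+1)$ and $(j,j+1)$ overlap only if $|i-j|\le 1$; the case $j=i$ is the same redex, and $j=i+1$ means index $i+1$ is simultaneously the "high" slot of one instance (needs $\alpha_{i+1}<0$) and the "low" slot of the other (needs $\alpha_{i+1}>0$), a contradiction, so there is in fact nothing to check. Rule (1) vs.\ rule (2) overlapping at a shared index: again the sign conditions on the shared coefficient are incompatible, so no genuine critical pair arises. The substantive work is rules (3)/(4) against everything: a rule-(3) redex occupies indices $i,i+1,\dots,j+1$ with $\alpha_i>0$, $\alpha_{i+1}=\cdots=\alpha_j=q-1$, $\alpha_{j+1}<q-1$. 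I would check its overlap with (a) another (3) redex, (b) a (4) redex, (c) a (1) redex, (d) a (2) redex, in each case splitting on where the second redex's indices sit relative to the interval $[i,j+1]$ (endpoints vs.\ interior). In each subcase one writes down both one-step reducts and then finds a common descendant, typically by applying the "other" rule (or a short chain of rules (1)/(2) to clear the carry, possibly followed by another (3)/(4)) to each; since no rule changes the value of the power sum, one only has to track the coefficient vectors, which are completely determined.

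The main obstacle — and the only place real bookkeeping is needed — is the overlap of a (3) (or (4)) redex with a (1) or (2) redex (or with another (3)/(4)) that meets it at an interior index $k$ with $i<k\le j$, where the interior coefficient $q-1$ is involved. Applying rule (1) there first breaks the uniform interior block, so one must then chase how the resulting carries propagate and argue they can be re-collected to reach the same normal-form-bound configuration that rule (3) produces directly. This is exactly the "long but simple enumeration of cases" the authors allude to for the preceding value-invariance claims, and it is why they elect to state the lemma with \qed and omit the proof. So my proposal is: reduce to finitely many overlap patterns via the disjoint-support remark and the sign-incompatibility remark; dispatch the (1)/(2)-only and sign-blocked cases immediately; and handle the remaining (3)/(4) overlaps by explicit computation of the two reducts and an explicit common descendant, using that rule applications preserve the value so the target is pinned down. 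I expect the full write-up to be several pages of routine casework with no conceptual difficulty, which justifies presenting it as an omitted proof.
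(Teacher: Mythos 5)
Your overall strategy --- reduce local confluence to a finite critical-pair analysis, exploit the sign-flip symmetry $\alpha_i\mapsto-\alpha_i$ to halve the work, and grind through the overlaps of rules $(3)/(4)$ --- is exactly the ``long but simple enumeration of cases'' the paper alludes to when it omits the proof, so the shape of the argument is right. However, there is a concrete error in your dispatching of the easy cases: the overlap of rule $(1)$ with rule $(2)$ at a shared index is \emph{not} sign-blocked. If rule $(1)$ sits at positions $(i,i+1)$ and rule $(2)$ at positions $(i+1,i+2)$, the shared coefficient $\alpha_{i+1}$ is the high slot of the $(1)$-redex (requiring $\alpha_{i+1}<0$) and the low slot of the $(2)$-redex (also requiring $\alpha_{i+1}<0$); the two conditions coincide rather than conflict, and symmetrically for the other ordering. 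Concretely, for $q=2$ the power sum with coefficients $(1,-1,1)$ admits both redexes: rule $(1)$ yields $(-1,0,1)$ while rule $(2)$ yields $(1,1,0)$, and joining these requires an application of rule $(3)$ to the second reduct. So this family of critical pairs --- arising from alternating-sign patterns, arguably the most important ones in the system --- cannot be dismissed and in general needs rules $(3)/(4)$ to close, with subcases depending on whether the coefficient produced by the first rewrite creates or extends a block of $\pm(q-1)$'s.

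The fix is straightforward but changes the bookkeeping: move the $(1)$-vs-$(2)$ overlaps (in both relative positions) into the ``substantive work'' bucket alongside the $(3)/(4)$ overlaps, and verify each by computing both one-step reducts and exhibiting a common descendant, using value-preservation to pin down the target as you propose. With that correction the plan is sound; without it, the enumeration is incomplete precisely at the critical pairs that make the lemma nontrivial.
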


\begin{lemma}\label{lem:power_sum_system_terminating}
The rewriting system $\mathcal{P}$ is terminating (noetherian) and hence confluent (\cite{bo93string}, Thm. 1.1.13). 
\end{lemma}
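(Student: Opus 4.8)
The plan is to exhibit a nonnegative integer-valued measure on power sums that strictly decreases under every rule of $\mathcal{P}$, so that no infinite rewriting chain can exist. The obvious candidate is something like $\Phi(S)=\sum_{i\ge 0}\abs{\alpha_i}$ (the $\ell_1$-norm of the coefficient vector), possibly with a lexicographic or weighted refinement. First I would check each of the four rules against this measure. Rules (1) and (2) replace $\abs{\alpha}+\abs{\beta}$ (with $\alpha,\beta$ of opposite sign, $\abs{\alpha}\le q-1$, $\abs{\beta}\le q-1$) by $\abs{\alpha-q}+\abs{\beta+1}=(q-\alpha)+(\abs{\beta}-1)$ in rule (1); since $\alpha\ge 1$ we have $q-\alpha\le q-1<q-1+?$… here the change in the two affected coefficients is $(q-\alpha)+(\beta+1)-\alpha-(-\beta) = q-2\alpha+1$, which is \emph{not} obviously negative, so a plain $\ell_1$-norm is too crude. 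This is exactly the main obstacle: the local transformations can move ``mass'' upward to higher-index coefficients, so a norm that ignores position does not work, while a naive position-weighted norm ($\sum \abs{\alpha_i}q^i$, say) is essentially $\abs{\e(S)}$, which is invariant. The termination measure has to penalize ``spread'' and ``bad sign patterns'' without being value-invariant.

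The way I would resolve this is to use a measure that counts sign alternations or, more precisely, tracks the ``spread'' of the support together with the coefficient sizes. Concretely I would try a lexicographic combination: first component the position of the highest nonzero coefficient (or better, $\max\supp$), second component $\sum_i\abs{\alpha_i}$, and observe that rules (1)–(4) either strictly decrease $\max\supp$ or keep it fixed while strictly decreasing $\sum\abs{\alpha_i}$. Rules (3) and (4) collapse a long run $\alpha,q-1,\dots,q-1,\beta$ into just two nonzero coefficients $\alpha-q$ at position $i$ and $\beta+1$ at position $j+1$, which for $j>i+1$ clearly decreases the number of nonzero coefficients and the $\ell_1$-norm. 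Rules (1) and (2) need the more careful bookkeeping: after applying rule (1), the coefficient at position $i$ becomes $\alpha-q$ (which has absolute value $q-\alpha\le q-1$) and at $i+1$ becomes $\beta+1$ (absolute value $\abs{\beta}-1$, since $\beta<0$). So the net change to $\sum\abs{\alpha_i}$ is $(q-\alpha)+(\abs{\beta}-1)-\alpha-\abs{\beta}=q-2\alpha-1$. For $\alpha\ge q/2$ this is $\le -1$, good; but for small $\alpha$ it is positive — meaning a simple $\ell_1$-norm genuinely fails and one must combine it with an auxiliary quantity, e.g. $\sum_i\abs{\alpha_i}\cdot 2^i$ or a count weighted so that moving mass from position $i$ to $i+1$ is penalized. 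Since $\alpha\mapsto \alpha-q$ moves a positive coefficient toward $0$ from above (so $\abs{\alpha-q}<\abs{\alpha}$ never holds; it's $q-\alpha$ vs $\alpha$), but crucially $\beta<0$ means $\abs{\beta+1}=\abs{\beta}-1$ \emph{strictly} drops, I would use the weight vector $(\,(q-1)^{-i}\,)_i$ or rather an order where lower positions dominate: measure $\mu(S)=\sum_i \abs{\alpha_i}\cdot N^{-i}$ is ill-typed; instead the right move is to notice position $i$'s coefficient goes \emph{up} in absolute value at most by $q-1$ while position $i+1$'s goes strictly \emph{down}, so a measure like $\mu(S)=\sum_i\abs{\alpha_i}\cdot R^i$ with $R<1$… this still doesn't terminate nicely.

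Given these complications, the cleanest route — and the one I would actually present — is a two-level lexicographic measure where the primary component is $\sum_{i}\abs{\alpha_i}$ restricted appropriately, combined with the observation that \emph{rules (1) and (2) can only be applied finitely often between applications that strictly reduce the support spread}. More honestly: I expect the intended argument is that each rule strictly decreases the pair $(\text{length of the ``span'' of }\supp S,\ \sum_i\abs{\alpha_i})$ in lexicographic order, \emph{after} one checks that rules (1),(2) with the opposite-sign hypothesis do reduce $\sum\abs{\alpha_i}$ because they turn an opposite-sign adjacent pair into one where the higher coefficient has strictly smaller modulus and the lower one has modulus $\le q-1$ — and then rule (1)/(2) cannot be reapplied at the same spot. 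The main obstacle, which I would flag explicitly, is pinning down the precise multiset or lexicographic order that handles rules (1) and (2) (where mass appears to move up by one position) simultaneously with rules (3) and (4) (where long runs collapse); once the right order is fixed, verifying the four rules is the ``long but simple enumeration of cases'' the authors already alluded to for Lemma~\ref{lem:power_sum_system_confluent}. With termination established, confluence follows from local confluence (Lemma~\ref{lem:power_sum_system_confluent}) by Newman's Lemma, exactly as cited.
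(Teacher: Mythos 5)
There is a genuine gap: you never arrive at a termination order that actually works, and the candidate you finally settle on is refuted by your own computation. For the lexicographic pair (span of the support, $\sum_i\abs{\alpha_i}$), take rule $(1)$ with $q\ge 4$, $\alpha=1$, $\beta=-2$: the two affected coefficients go from $(1,-2)$ to $(1-q,-1)$, so both positions remain in the support (span unchanged) while the $\ell_1$-norm jumps from $3$ to $q$, i.e.\ the second component strictly \emph{increases} with the first unchanged. (Your own formula $q-2\alpha-1$ already told you this.) The first component is also not monotone: rule $(3)$ with $\beta=0$ turns the zero coefficient at position $j+1$ into $1$, so $\max\supp$ and the span can grow. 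So neither ordering of these two quantities gives a well-founded decreasing measure, and the proposal ends without a proof.

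The idea you are missing is that one need not exhibit a single measure decreasing under every rule; the paper instead runs a stabilization argument on a hypothetical infinite rewriting sequence, in three stages. First, the number of non-zero coefficients never increases under any rule, so it eventually stabilizes. Second, no rule ever creates a non-zero coefficient to the left of the region it touches --- non-zeros only drift rightward --- and here is the step you half-saw but discarded: the invariance of $\e(S)$ is used not as a (useless, constant) decreasing quantity but as a \emph{boundedness constraint}: with a fixed value and a fixed number $k$ of non-zeros, the estimate $\abs{\e(S)}\ge q^{p_k}-(q-1)k\,q^{p_{k-1}}$ for positions $p_1<\dots<p_k$ prevents the support from drifting right forever, so the positions stabilize too. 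Third, once positions are frozen, rules $(3)$ and $(4)$ can no longer fire (they move or kill non-zeros), and rules $(1)$ and $(2)$ only convert an adjacent opposite-sign pair at $(i,i+1)$ into an equal-sign (or shorter) configuration, so the leftmost such sign alternation moves strictly left --- bounded below by position $0$. Each stage terminates, hence so does the sequence; confluence then follows from Lemma~\ref{lem:power_sum_system_confluent} by Newman's Lemma as you say.
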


\begin{proof}
Let $S_1\underset{\mathcal{P}}{\Rightarrow}^\ast S_2\underset{\mathcal{P}}{\Rightarrow}^\ast\ldots$ be a sequence of rewritings. Since none of the rules of $\mathcal{P}$ increases the number of non-zero coefficients, this number must eventually reach a minimum. Thus, ignoring a finite number of terms, we can assume that the number of non-zeros is constant within the sequence. No rule in $\mathcal{P}$ moves a non-zero coefficient to the left (in the direction of smaller exponents). As the value of the $S_i$ is fixed, non-zeros cannot be moved indefinitely to the right either. Again, by disregarding a finite prefix of the sequence, we assume that the positions of the non-zero coefficients are fixed. At this point, no application of $(3)$ or $(4)$ is possible any more. Finally, rules of type $(1)$ and $(2)$ move pairs of consecutive coefficients with opposite signs to the left (or remove them), which can also occur only finitely often. Thus, the sequence $S_1,S_2,\ldots$ is eventually constant. 
\end{proof}

We call power sums that are irreducible with respect to $\mathcal{P}$ compact\index{power sum!compact}. If $S=\sum_{i\ge 0}\alpha_iq^i$ is compact, then so is $-S=\sum_{i\ge 0}(-\alpha_i)q^i$. 

\begin{proposition}\label{prop:power_sum_properties}~
\begin{enumproperties}
\item\label{prop:power_sum_properties:unique} For each power sum there is a unique compact power sum of the same value. 
\item\label{prop:power_sum_properties:min} Compact power sums have the minimal number of non-zero coefficients among all power sums of the same value. 
\item\label{prop:power_sum_properties:inc} If $S$ and $T$ are compact power sums, then $\e(T)=\e(S)+1$ if and only if
\begin{align*}
	S&=U\cdot q^{i+1}+\alpha_i\cdot q^i+\sum_{j=0}^{i-1}\alpha_j\cdot q^j\quad\text{and}\cr
	T&=U\cdot q^{i+1}+\beta_i\cdot q^i+\sum_{j=0}^{i-1}\beta_j\cdot q^j
\end{align*}
for some power sum $U$, $\beta_i=\alpha_i+1$, and for each $0\le j<i$ either $\alpha_j=q-1$ and $\beta_j=0$ or $\alpha_j=0$ and $\beta_j=-q+1$. 
\item\label{prop:power_sum_properties:lex} The usual order on $D$ gives rise to a lexicographical order on power sums (where coefficients of higher powers of $q$ are compared before those of lower powers). Restricted to compact power sums, this lexicographical order coincides with the order by values. 
\end{enumproperties}
\end{proposition}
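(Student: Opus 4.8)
The plan is to treat the four items in order, using termination and confluence of $\mathcal{P}$ (Lemmas \ref{lem:power_sum_system_confluent} and \ref{lem:power_sum_system_terminating}) as the backbone.

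\emph{Item \refenumproperty{prop:power_sum_properties:unique}.} Existence: start with any power sum of the given value and apply rewriting rules until a normal form is reached; this halts by termination, and no rule changes the value, so the result is a compact power sum of the same value. Uniqueness: suppose $S$ and $T$ are compact power sums with $\e(S)=\e(T)$. Their difference $S-T$ (coefficientwise, possibly with coefficients outside $D$) represents $0$, so by the comparison algorithm behind Proposition \ref{prop:compare_in_red_pc} — applied purely as a fact about $q$-ary sums with bounded coefficients — all coefficients of $S-T$ must vanish, i.e.\ $S=T$. Alternatively, and more in the spirit of the rewriting setup, one can argue that any two power sums of equal value are joinable: this requires showing that the one-step ``value-preserving moves'' generate the same equivalence classes as equality of value, which again reduces to the carry-propagation analysis. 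I would go with the direct coefficient argument since it is shortest.

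\emph{Item \refenumproperty{prop:power_sum_properties:min}.} Given an arbitrary power sum $S'$ of value $v$, reduce it to its compact form $S$; by item \refenumproperty{prop:power_sum_properties:unique} this $S$ is the unique compact representative of $v$, and since no rule of $\mathcal{P}$ increases the number of non-zero coefficients (this is noted in the proof of Lemma \ref{lem:power_sum_system_terminating}), $S$ has at most as many non-zeros as $S'$. Hence the compact representative minimizes the non-zero count among all power sums of value $v$.

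\emph{Item \refenumproperty{prop:power_sum_properties:inc}.} One direction is a direct computation: if $S$ and $T$ have the stated shapes with $\beta_i=\alpha_i+1$ and each low block contributing either $(q-1)\cdot q^j\mapsto 0$ or $0\mapsto(-q+1)\cdot q^j$, then $\e(T)-\e(S)=q^i-\sum_{j=0}^{i-1}(q-1)q^j=q^i-(q^i-1)=1$ in the first case for each position, and telescoping over the block gives exactly $+1$ overall; I would organize this as: the index $i$ is the least position where $S$ and $T$ differ, below it $S$ has only coefficients $q-1$ where $T$ has $0$, etc. For the converse, suppose $\e(T)=\e(S)+1$ with $S,T$ compact. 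Let $i$ be the smallest index where $\alpha_i\neq\beta_i$; argue first that $T$ and $S$ must agree on all positions $>i$ as well (otherwise the difference above position $i$ would already be $\geq q^{i+1}$ in absolute value, too large to be corrected to a total difference of $1$ by the bounded lower coefficients — this is the same estimate $q^{n}-\sum_{j<n}(2q-2)q^j\geq 2$ used in the proof of Proposition \ref{prop:compare_in_red_pc}). Then $\beta_i>\alpha_i$, and since the remaining discrepancy from positions $<i$ can contribute at most $\sum_{j<i}(q-1)q^j=q^i-1$ in absolute value, we must have $\beta_i=\alpha_i+1$ and the lower block must contribute exactly $-(q^i-1)$, forcing $\alpha_j-\beta_j=q-1$ for every $j<i$; compactness of $S$ and $T$ then pins down $(\alpha_j,\beta_j)\in\{(q-1,0),(0,-q+1)\}$ (any other pair with difference $q-1$, e.g.\ $(q-2,-1)$, would leave $S$ or $T$ reducible by rule $(3)$ or $(4)$). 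I expect this converse to be the main obstacle, because ruling out the non-extremal pairs requires carefully invoking irreducibility, and one must be sure the reduction rules $(3)$/$(4)$ really do apply in all the would-be-bad configurations.

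\emph{Item \refenumproperty{prop:power_sum_properties:lex}.} Let $S\neq T$ be compact, and let $i$ be the highest index where they differ. If $\alpha_i<\beta_i$, I claim $\e(S)<\e(T)$: write $\e(T)-\e(S)=(\beta_i-\alpha_i)q^i+\sum_{j<i}(\beta_j-\alpha_j)q^j$, and since $\beta_i-\alpha_i\geq 1$ while $|\sum_{j<i}(\beta_j-\alpha_j)q^j|\leq(2q-2)\sum_{j<i}q^j<q^i$ — wait, more sharply $\leq(2q-2)\frac{q^i-1}{q-1}=2(q^i-1)$, which can exceed $q^i$, so I instead reuse the sign-determination logic of Proposition \ref{prop:compare_in_red_pc} (cases 2 and 3 there) applied to the coefficient sequence of $T-S$: the leading nonzero coefficient has absolute value $\geq 1$ at index $i$, and the argument there shows the overall sign equals that of the leading coefficient unless a same-value-index collapse occurs, which cannot here since $T-S\neq 0$. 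Thus $\e(T)-\e(S)>0$. The converse (equal values $\Rightarrow$ equal sequences) is item \refenumproperty{prop:power_sum_properties:unique}, so the lexicographic order and the value order agree on compact power sums.
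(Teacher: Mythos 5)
Your existence argument for (i) and your item (ii) are fine and match the paper, but the rest has genuine gaps. The ``direct coefficient argument'' for uniqueness in (i) does not work: a sum $\sum_i\delta_i\cdot q^i$ with $\abs{\delta_i}\le 2q-2$ can evaluate to $0$ without all $\delta_i$ vanishing (e.g.\ $1\cdot q^1-q\cdot q^0=0$), and the algorithm behind Proposition~\ref{prop:compare_in_red_pc} only determines the sign of the value, not that the coefficients are zero; compactness of $S$ and $T$ has to enter somewhere, and in your argument it never does. The paper instead proves uniqueness by the rewriting route you set aside: any two power sums of equal value are $\Leftrightarrow^\ast$-equivalent under $\mathcal{P}$ (both are connected to the standard $q$-ary representation by applying rules forward and backward), so confluence plus termination yields a unique normal form per value.

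In the converse of (iii) the index bookkeeping is already inconsistent: if $i$ is the \emph{smallest} differing index, then ``$S$ and $T$ agree on all positions $>i$ as well'' would force them to differ in a single position, contradicting the target shape (for $q=2$, the compact sums $S=1$ and $T=q$ differ at positions $0$ and $1$); you need the \emph{largest} differing index. Even after that repair, the bound $\sum_{j<i}(q-1)q^j$ on the lower-block discrepancy should be $(2q-2)\sum_{j<i}q^j=2(q^i-1)$ (you notice exactly this in (iv) but use the smaller bound here), and the step ``the lower block contributes exactly $-(q^i-1)$, forcing $\alpha_j-\beta_j=q-1$ for every $j<i$'' is a non sequitur: for $q=3$ one has $q^2-1=8=4\cdot 3-4$ with both coefficients in the admissible range $\{-4,\ldots,4\}$, so compactness is needed to force the coefficient pattern in the first place, not merely to pin down $(\alpha_j,\beta_j)$ afterwards. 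Finally, (iv) rests on the claim that the sign of $T-S$ equals the sign of its leading non-zero coefficient; case~3 of the proof of Proposition~\ref{prop:compare_in_red_pc} shows precisely that this can fail when that coefficient is $\pm 1$ (e.g.\ $1\cdot q^1-(2q-2)\cdot q^0=2-q<0$ for $q\ge 3$), so this too needs compactness in an essential way that is not supplied. The paper sidesteps both difficulties: it proves the converse of (iii) constructively, by incrementing the lowest coefficient of $S$ and re-compactifying via an explicit case analysis and induction (uniqueness from (i) then identifies the result with $T$), and it obtains (iv) as a consequence of (iii) rather than by a direct sign estimate.
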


\begin{proof}
For \refenumproperty{prop:power_sum_properties:unique} it suffices to show that for two power sums $S$ and $T$ of the same value, we have $S\underset{\mathcal{P}}{\Leftrightarrow}^\ast T$. This is true, since applying the rules of $\mathcal{P}$ forward or backward, one can turn any power sum into an ordinary $q$-ary number with coefficients from $\{0,\ldots,q-1\}$. (For instance, for positive values of $S$, use rules of type $(1)$ backward and rules of type $(2)$ forward to push negative coefficients to the right.)

Claim \refenumproperty{prop:power_sum_properties:min} follows from the fact that no rule increases the number of non-zero coefficients. 

For the ``if'' part of \refenumproperty{prop:power_sum_properties:inc}, we observe that component-wise subtraction yields
\begin{equation*}
	T-S=q^i-(q-1)\sum_{j=0}^{i-1}q^j,
\end{equation*}
which evaluates to $1$. For the ``only if'' part, let $S=\sum_{\ell\ge 0}\alpha_\ell\cdot q^\ell$ be compact. Consider $S^\prime=\sum_{\ell\ge 1}\alpha_\ell\cdot q^\ell+(\alpha_0+1)$. If $S^\prime$ is a valid power sum (i.e. $\alpha_0+1\in D$) and $S^\prime$ is compact, it already has the desired form (for $i=0$). Otherwise we have one of the following cases:
\begin{enumcases}
\item $S^\prime$ is not a valid power sum, since $\alpha_0=q-1$. Let $k>0$ be the maximum number such that $\alpha_0=\ldots=\alpha_k=q-1$. We transform $S^\prime$ into
\begin{equation*}
	(\alpha_{k+1}+1)\cdot q^{k+1}+\sum_{\ell>k+1}\alpha_\ell\cdot q^\ell
\end{equation*}
and use induction on $(\alpha_{k+1}+1)\cdot q^0+\sum_{\ell>k+1}\alpha_\ell\cdot q^{\ell-k-1}$. 
\item A rule of type $(1)$ can be applied. We have $\alpha_0=0$ and $\alpha_1<0$. Applying the rule gives
\begin{equation*}
	S^\prime\underset{(1)}{\Longrightarrow}\sum_{\ell\ge 2}\alpha_\ell\cdot q^\ell+(\alpha_1+1)\cdot q+(-q+1).
\end{equation*}
Use induction on $\sum_{\ell\ge 2}\alpha_\ell\cdot q^{\ell-1}+(\alpha_1+1)$. 
\item A rule of type $(3)$ can be applied. We have $\alpha_0=0$ and $\alpha_1=\ldots=\alpha_k=q-1$ and $\alpha_{k+1}<q-1$ for some $k\ge 1$. This yields
\begin{equation*}
	S^\prime\underset{(3)}{\Longrightarrow}\sum_{\ell>k+1}\alpha_\ell\cdot q^\ell+(\alpha_{k+1}+1)\cdot q^{k+1}+(-q+1)
\end{equation*}
and induction applies to $\sum_{\ell>k}\alpha_\ell\cdot q^{\ell-k-1}+(\alpha_{k+1}+1)$. 
\end{enumcases}

Finally, \refenumproperty{prop:power_sum_properties:lex} is a consequence of \refenumproperty{prop:power_sum_properties:inc}. 
\end{proof}

The notion of compactness was introduced in \cite{muw11pc} for $q=2$ and subsequently used in \cite{dlu13efficient}. Our definition is a generalization that inherits most of the original characteristics. 

There is, however, one important difference: it is much less obvious for $q>2$ how to make a power sum compact in linear time. In the case $q=2$ it suffices to apply the rules of $\mathcal{P}$ from left to right. Yet, if for instance $q=3$, the application of rule $(1)$ to
\begin{equation*}
	1+q+q^2+q^3-2q^4\underset{(1)}{\Longrightarrow}+q+q^2-2q^3-q^4
\end{equation*}
turns the previously compact prefix $1+q+q^2+q^3$ into the $\mathcal{P}$-reducible sum $1+q+q^2-2q^3$. 

\begin{proposition}\label{prop:compactify_power_sum}
Any power sum $S=\sum_{i=0}^n\alpha_i\cdot q^i$ can be transformed into a compact power sum with the same value in $\O(n)$ time. 
\end{proposition}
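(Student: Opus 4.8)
The plan is to first establish an explicit \emph{local} description of compactness and then realise it by a bottom-up sweep. The claim to prove is that a power sum $\sum_i\alpha_i q^i$ with coefficients in $D$ is compact if and only if (i) no two consecutive positions carry coefficients of strictly opposite sign, and (ii) no non-zero coefficient is immediately followed by the sign-matching extreme digit, i.e.\ $\alpha_i>0\Rightarrow\alpha_{i+1}\neq q-1$ and $\alpha_i<0\Rightarrow\alpha_{i+1}\neq -q+1$. One direction is clear; for the other one checks, rule by rule, when $\mathcal{P}$ is applicable: rules $(1)$ and $(2)$ can be applied exactly when (i) fails, and whenever $\alpha_i>0$ and $\alpha_{i+1}=q-1$ one may extend the block of $(q-1)$'s upward to a maximal run and apply rule $(3)$ (and symmetrically for $(4)$), so $(3)$ and $(4)$ can be applied exactly when (ii) fails. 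This is the same finite case analysis already invoked for Lemma \ref{lem:power_sum_system_confluent}.

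For the algorithm, it is worth recalling why the naive left-to-right approach fails for $q\ge 3$ although it works for $q=2$: when $q=2$ conditions (i) and (ii) merely say ``no two adjacent non-zero coefficients'', i.e.\ compact $=$ non-adjacent form, whereas for $q\ge 3$ — as the displayed example shows — applying a rule high up can destroy the compactness of the prefix below it, and the repair cascades downward. Instead I would process from the \emph{least} significant end. \textbf{Phase 1:} determine $\sign\e(S)$ by the ``compare to $0$'' procedure behind Proposition \ref{prop:compare_in_red_pc} in $\O(n)$ time, negate $S$ if necessary so that $v:=\e(S)\ge 0$ (negating a compact power sum keeps it compact), and convert $v$ to ordinary base-$q$ form $v=\sum_i d_i q^i$ with $d_i\in\{0,\dots,q-1\}$ by one carry-propagation sweep; since $\abs{\e(S)}<q^{n+1}$ this has only $\O(n)$ digits. \textbf{Phase 2:} sweep over $d_0,d_1,\dots$ maintaining a ``fresh'' pointer. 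At a fresh position: if the current digit is $0$, emit $0$ and advance; otherwise scan the following digits $d_1,d_2,\dots$ until the first one equal to $0$ or $q-1$, say $d_t$. If $d_t=0$, emit the block $d_0,\dots,d_{t-1}$ unchanged (a positive run: $d_0\in\{1,\dots,q-1\}$ and $d_1,\dots,d_{t-1}\in\{1,\dots,q-2\}$, which is exactly what (ii) demands) followed by the terminating $0$. If $d_t=q-1$, emit the negative run $d_0-q,\ d_1+1-q,\dots,d_{t-1}+1-q$ followed by $0$ (the carry produced by the $q-1$ collapses it to $0$ and ends the run), and add the resulting $+1$ carry to the still unprocessed suffix. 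In both cases continue ``fresh'' past the terminating $0$.

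Correctness then follows easily: value is preserved step by step (each emitted coefficient $\beta_i$ satisfies $\beta_i\equiv v_i\pmod q$ with $v_{i+1}=(v_i-\beta_i)/q$), and the output is a concatenation of runs of the shape above, separated by at least one $0$, hence satisfies (i) and (ii) and so, by Proposition \ref{prop:power_sum_properties}\refenumproperty{prop:power_sum_properties:unique}, equals the compact power sum of value $\e(S)$. For the running time, the forward scan that classifies a run inspects exactly the digits that run occupies plus one terminating digit, so every position is inspected $\O(1)$ times; and the only carries ever generated turn a $q-1$ in the unprocessed suffix into a $0$, a digit thereafter read only once, so the total carry work is $\O(n)$ as well. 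Together with Phase 1 this yields $\O(n)$.

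The one delicate point — and the reason $q\ge 3$ is harder than $q=2$ — is showing the sweep never gets stuck: one must check that the forward scan always reaches a $0$ or a $q-1$ (immediate, since only finitely many $d_i$ are non-zero) and that when it reaches a $q-1$ first the negative run terminates cleanly, i.e.\ the $+1$ carries ripple through the interior digits $d_1,\dots,d_{t-1}\in\{1,\dots,q-2\}$ without further carry and then collapse the $q-1$ at position $t$ to $0$. It is precisely the bound $d_s\le q-2$ on these interior digits that makes the interior entries $d_s+1-q$ of the negative run land in $\{-q+2,\dots,-1\}$, and in particular never hit the forbidden value $-q+1$. The rest is bookkeeping.
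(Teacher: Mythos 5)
Your proof is correct, but it takes a genuinely different route from the paper's. The paper works directly on the signed-digit representation: it shows that $S$ and its compact form $T$ differ by at most one carry move $(\star^\pm_i)$ per position, encodes the admissible carry patterns $J_i\in\{-1,0,+1\}$ as a chain of local constraints $(\meddiamond_i)$, and solves this by dynamic programming (a left-to-right pass computing the feasible sets $\mathcal{J}_i[j,k]$, then a right-to-left readout). You instead first prove a local characterization of compactness (no adjacent opposite signs; no non-zero coefficient immediately followed by the extreme digit of the same sign --- this is correct, and the equivalence with $\mathcal{P}$-irreducibility checks out, since a violation of your condition (ii) always extends to a maximal block of $(q-1)$'s to which rule $(3)$ or $(4)$ applies), then normalize to a non-negative value in standard base-$q$ form, and finally produce the compact form by a greedy least-significant-first sweep that classifies each maximal block of non-zero digits by whether it is terminated by a $0$ (emit as a positive run) or by a $q-1$ (emit as a negative run and push a $+1$ carry into the suffix). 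Your approach buys an explicit, readable description of the compact form of an integer in terms of its ordinary $q$-ary digits, at the cost of the extra normalization phase (sign test plus digit conversion) and a slightly delicate amortized argument for the carries; for the latter you should state explicitly that successive carry ranges are disjoint --- each new carry starts strictly beyond the endpoint of the previous one because the next run begins at the digit the previous carry incremented --- so that every position absorbs at most one carry and the total carry work is $\O(n)$. The paper's DP avoids the normalization and the carry bookkeeping entirely and treats positive and negative values uniformly, but is less transparent about what the compact form actually looks like. Both arguments are valid and both yield the claimed $\O(n)$ bound; as a sanity check, your algorithm reproduces the answer of Example \ref{ex:compactify_power_sum}.
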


\begin{proof}
Any two power sums $S$ and $T$ with $\e(S)=\e(T)$ can be transformed into each other using only replacements of the form
\begin{equation*}
	\alpha\cdot q^{i-1}+\beta\cdot q^i\longrightarrow(\alpha\pm q)\cdot q^{i-1}+(\beta\mp 1)\cdot q^i.\eqno{(\star^\pm_i)}
\end{equation*}
Moreover, at most one application of $(\star^+_i)$ or $(\star^-_i)$ is needed for each $i$. In fact, whether $(\star^+_i)$ or $(\star^-_i)$ or neither is needed, depends only on $S$ and $T$. This can be seen using induction on $i$. 

Let $T=\sum_{i=0}^{n+1}\beta_i\cdot q^i$ be the compact power sum with $\e(S)=\e(T)$. Define $J_i\in\{+1,-1,0\}$ ($1\le i\le n$) depending on whether the replacement $(\star^+_i)$ or $(\star^-_i)$ or neither of them occurs in the sequence $S\underset{(\star^\pm_i)}{\Longrightarrow}^\ast T$. For notational conveniance, we define $J_0=J_{n+2}=0$ and $\alpha_{n+1}=0$. Then we have $\beta_i=\alpha_i+J_i-q\cdot J_{i+1}$ for $0\le i\le n+1$. 

It remains to compute the values $J_i$ ($1\le i\le n+1$). These are the unique solution of the following system of conditions ($0\le i\le n+1$):
\begin{align*}
	(\meddiamond_i)\quad
	&\alpha_{i-1}+J_{i-1}+q\cdot J_i\in D\ \text{and}\ \alpha_i+J_i+q\cdot J_{i+1}\in D\ \text{and}\cr
	&\sign(\alpha_{i-1}+J_{i-1}+q\cdot J_i)\cdot\sign(\alpha_i+J_i+q\cdot J_{i+1})\neq -1\ \text{and}\cr
	&\text{if }\sign(\alpha_{i-1}+J_{i-1}+q\cdot J_i)=\sign(\alpha_i+J_i+q\cdot J_{i+1})=\pm 1,\cr
	&\qquad\text{then}\ \alpha_i+J_i+q\cdot J_{i+1}\neq\pm(q-1).
\end{align*}
For $i=2,\ldots,n+1$ and for all $j,k\in\{-1,0,+1\}$ we define $\mathcal{J}_i[j,k]$ to be the set of possible values for $J_i$, provided that $J_{i-2}=j$ and $J_{i-1}=k$:
\begin{align*}
	\mathcal{J}_i[j,k]:=\{J_i\>:\>&\text{there are $J_1,\ldots,J_{i-3}$ such that}\cr
	&\text{$J_0=0,J_1,\ldots,J_{i-3},J_{i-2}=j,J_{i-1}=k,J_i$ satisfy $(\meddiamond_0),\ldots,(\meddiamond_{i-1})$}\}
\end{align*}
Since $(\meddiamond_{i-1})$ only affects $J_{i-2},J_{i-1},J_i,\alpha_{i-2},\alpha_{i-1}$, the sets $\mathcal{J}_i[\cdot,\cdot]$ can be computed in constant time using $\mathcal{J}_{i-1}[\cdot,\cdot]$. After this, the solution $J_{n+1},\ldots,J_1$ can be read from right to left. 
\end{proof}

\begin{example}\label{ex:compactify_power_sum}
Suppose that $q=3$ and we want to make $S=1+q-2q^2-2q^3+q^4-q^5$ compact. We get:
\begin{center}
\begin{tabular}{lcccccc}\toprule
$\mathcal{J}_i[j,k]$	&$i=2$				&$i=3$				&$i=4$				&$i=5$				&$i=6$				&$i=7$\cr\midrule
$j=-1,\,k=-1$	&$\emptyset$	&$\emptyset$	&$\emptyset$	&$\{0\}$			&$\emptyset$	&$\emptyset$\cr
$j=-1,\,k= 0$	&$\emptyset$	&$\emptyset$	&$\emptyset$	&$\emptyset$	&$\{-1,0\}$		&$\emptyset$\cr
$j=-1,\,k=+1$	&$\emptyset$	&$\emptyset$	&$\emptyset$	&$\emptyset$	&$\emptyset$	&$\emptyset$\cr
$j= 0,\,k=-1$	&$\emptyset$	&$\emptyset$	&$\{-1\}$			&$\emptyset$	&$\emptyset$	&$\emptyset$\cr
$j= 0,\,k= 0$	&$\{0\}$			&$\{-1\}$			&$\emptyset$	&$\emptyset$	&$\emptyset$	&$\{0\}$		\cr
$j= 0,\,k=+1$	&$\{+1\}$			&$\emptyset$	&$\emptyset$	&$\emptyset$	&$\emptyset$	&$\emptyset$\cr
$j=+1,\,k=-1$	&$\emptyset$	&$\emptyset$	&$\emptyset$	&$\emptyset$	&$\emptyset$	&$\emptyset$\cr
$j=+1,\,k= 0$	&$\emptyset$	&$\emptyset$	&$\emptyset$	&$\emptyset$	&$\emptyset$	&$\emptyset$\cr
$j=+1,\,k=+1$	&$\emptyset$	&$\{0\}$			&$\emptyset$	&$\emptyset$	&$\emptyset$	&$\emptyset$\cr
\bottomrule
\end{tabular}
\end{center}
Since $J_7=0$, we deduce from the last column that $J_5=J_6=0$ as well. The only $0$ in the column for $i=6$ with $k=0$ yields $J_4=-1$ and so on. We end up with $J_0=J_1=J_2=0,\,J_3=J_4=-1,\,J_5=J_6=J_7=0$ which results in $T=1+q+q^2-q^5$. 
\end{example}

% - - - - - - - - - - - - - - - - - - - - - - - -
\subsubsection{Power circuits and trees}\label{sec:pc_treed}

Property \ref{prop:power_sum_properties:lex} of Proposition \ref{prop:power_sum_properties} is the main motivation for the following definition. 

\begin{definition}\label{def:treed_pc}(cf. \cite{dlu13efficient}, Def. 5)
A power $\Pi=(\Gamma,\delta)$ circuit is called \emph{treed}\index{power circuit!treed} (a convenient shorthand for ``reduced and equipped with additional data in the form of a tree''), if
\begin{enumproperties}
\item\label{def:treed_pc:red} $\Pi$ is reduced (i.e., different nodes evaluate to different values and it is equipped with an ordered list $\Gamma=(u_1,\ldots,u_n)$ and a bit vector, see Definition \ref{def:reduced_pc})
\item\label{def:treed_pc:tree} $\Pi$ is equipped with a directed tree $T$ in which each node has up to $\abs{D}$ outgoing edges, labelled with pairwise distinct values from $D$ (and ordered from left to right by increasing values). For each leaf in $T$, the unique path from the root to this leaf must consist of exactly $n=\abs{\Gamma}$ edges. The sequence of labels $\alpha_1,\ldots,\alpha_n$ of such a path (read from the leaf to the root) corresponds to a marking $M:u_i\mapsto\alpha_i$. For all leaves, the power sum $\sum_{i=1}^n\alpha_i\cdot q^i$ (which evaluates to $\e(M)$) must be compact. Any marking represented as a leaf in this way is called compact. 
\item\label{def:treed_pc:comp} All successor markings $\Lambda_u$ ($u\in\Gamma$) are compact. 
\item\label{def:treed_pc:mark} Any node $v\in\Gamma$ that is contained in the support of some marking (compact or not) is not the top node of a maximal chain. 
\item\label{def:treed_pc:lev} For each level in the tree, there is a list containing the nodes of this level. 
\end{enumproperties}
\end{definition}

\begin{example}
Figure \ref{fig:treed_circuit} shows an example of a treed circuit (for $q=2$) alongside the tree containing its markings. Note that the order of the nodes is implicitly given by the lowest level of the tree. 
\end{example}

\begin{figure}
	\begin{center}
		\begin{PCpic}
			\PCnode{u1}{$u_1$}{(0,0)}
			\PCnode{u2}{$u_2$}{(0,2)}
			\PCnode{u3}{$u_3$}{(0,4)}
			\PCnode{u4}{$u_4$}{(2,5)}
			\PCnode{u5}{$u_5$}{(-2,5)}
			\PCedge{u2}{u1}{$+1$}{left}{}
			\PCedge{u3}{u2}{$+1$}{left}{}
			\PCedge{u4}{u1}{$-1$}{right}{bend left}
			\PCedge{u4}{u3}{$+1$}{above}{}
			\PCedge{u5}{u1}{$+1$}{right}{bend right}
			\PCedge{u5}{u3}{$+1$}{above}{}
			\PCmark{u3}{M}{-1}{above}
			\PCmark{u1}{M}{+1}{right}
		\end{PCpic}
		\begin{tikzpicture}
			% tree
			\node[circle,fill,inner sep=0pt,minimum size=0.15cm] (e) at (0,0) {};
			\node[circle,fill,inner sep=0pt,minimum size=0.15cm] (0) at (0,-1) {};
			\node[circle,fill,inner sep=0pt,minimum size=0.15cm] (00) at (0,-2) {};
			\node[circle,fill,inner sep=0pt,minimum size=0.15cm] (00m) at (-1.5,-3) {};
			\node[circle,fill,inner sep=0pt,minimum size=0.15cm] (000) at (0,-3) {};
			\node[circle,fill,inner sep=0pt,minimum size=0.15cm] (00p) at (3.5,-3) {};
			\node[circle,fill,inner sep=0pt,minimum size=0.15cm] (00m0) at (-1.5,-4) {};
			\node[circle,fill,inner sep=0pt,minimum size=0.15cm] (0000) at (0,-4) {};
			\node[circle,fill,inner sep=0pt,minimum size=0.15cm] (000p) at (2,-4) {};
			\node[circle,fill,inner sep=0pt,minimum size=0.15cm] (00p0) at (3.5,-4) {};
			\node[circle,fill,inner sep=0pt,minimum size=0.15cm] (00m0p) at (-1,-5) {};
			\node[circle,fill,inner sep=0pt,minimum size=0.15cm] (00000) at (0,-5) {};
			\node[circle,fill,inner sep=0pt,minimum size=0.15cm] (0000p) at (1,-5) {};
			\node[circle,fill,inner sep=0pt,minimum size=0.15cm] (000p0) at (2,-5) {};
			\node[circle,fill,inner sep=0pt,minimum size=0.15cm] (00p0m) at (3,-5) {};
			\node[circle,fill,inner sep=0pt,minimum size=0.15cm] (00p0p) at (4,-5) {};
			\draw[->]	(e) edge node[right] {$0$} (0)
								(0) edge node[right] {$0$} (00)
								(00) edge node[above left=-0.1] {$-1$} (00m)
								(00) edge node[right] {$0$} (000)
								(00) edge node[above right=-0.1] {$+1$} (00p)
								(00m) edge node[right] {$0$} (00m0)
								(000) edge node[right] {$0$} (0000)
								(000) edge node[above right=-0.1] {$+1$} (000p)
								(00p) edge node[right] {$0$} (00p0)
								(00m0) edge node[right] {$+1$} (00m0p)
								(0000) edge node[left] {$0$} (00000)
								(0000) edge node[right] {$+1$} (0000p)
								(000p) edge node[left] {$0$} (000p0)
								(00p0) edge node[left] {$-1$} (00p0m)
								(00p0) edge node[right] {$+1$} (00p0p);
			% levels of tree edges
			\node[gray] (u1) at (-2,-4.5) {$u_1$};
			\node[gray] (u2) at (-2,-3.5) {$u_2$};
			\node[gray] (u3) at (-2,-2.5) {$u_3$};
			\node[gray] (u4) at (-2,-1.5) {$u_4$};
			\node[gray] (u5) at (-2,-0.5) {$u_5$};
			% lists of nodes for each tree level
			\node (li0) at (-2.5,0) {};
			\node (li1) at (-2.5,-1) {};
			\node (li2) at (-2.5,-2) {};
			\node (li3) at (-2.5,-3) {};
			\node (li4) at (-2.5,-4) {};
			\node (li5) at (-2.5,-5) {};
			\node (lt0) at (5,0) {};
			\node (lt1) at (5,-1) {};
			\node (lt2) at (5,-2) {};
			\node (lt3) at (5,-3) {};
			\node (lt4) at (5,-4) {};
			\node (lt5) at (5,-5) {};
			\draw[gray,->]	(li0) edge (e) (e) edge (lt0)
											(li1) edge (0) (0) edge (lt1)
											(li2) edge (00) (00) edge (lt2)
											(li3) edge (00m) (00m) edge (000) (000) edge (00p) (00p) edge (lt3)
											(li4) edge (00m0) (00m0) edge (0000) (0000) edge (000p) (000p) edge (00p0) (00p0) edge (lt4)
											(li5) edge (00m0p) (00m0p) edge (00000) (00000) edge (0000p) (0000p) edge (000p0) (000p0) edge (00p0m) (00p0m) edge (00p0p) (00p0p) edge (lt5);
			\node[anchor=north] (M) at (-1,-5) {$M$};
			\node[anchor=north] (Lv1) at (0,-5) {$\Lambda_{v_1}$};
			\node[anchor=north] (Lv2) at (1,-5) {$\Lambda_{v_2}$};
			\node[anchor=north] (Lv3) at (2,-5) {$\Lambda_{v_3}$};
			\node[anchor=north] (Lv4) at (3,-5) {$\Lambda_{v_4}$};
			\node[anchor=north] (Lv5) at (4,-5) {$\Lambda_{v_5}$};
		\end{tikzpicture}
	\end{center}
	\caption{Treed power circuit ($q=2$) with compact marking $M$}
	\label{fig:treed_circuit}
\end{figure}
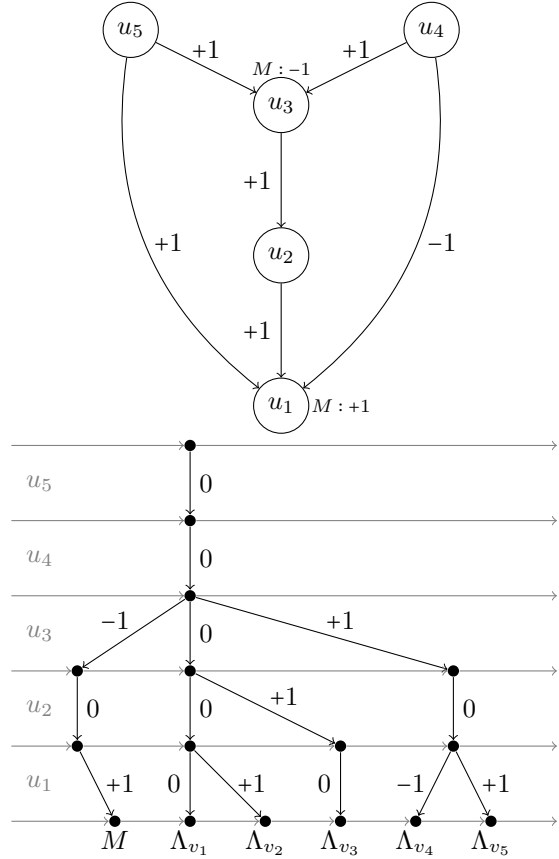

The most time consuming step in \ERed was to find the position of a new node in the sorted list of $\Gamma$. Using binary search, this took $\O(n\log n)$ time. If $\Gamma$ is treed and $\Lambda_u$ is compact, we can improve this to $\O(n)$: the position of the leaf corresponding to $\Lambda_u$ already tells the position of $u$ in $\Gamma$. In order to adjust the bit vector, we have to read the paths from the root of the tree to the respective leaves and check the condition given by Proposition \ref{prop:power_sum_properties} \ref{prop:power_sum_properties:inc}. Yet, making a circuit treed is more complicated than just reducing it. 

For the time analysis, we use amortization\index{amortized analysis} with respect to a potential function $\pot$, mapping power circuits to numbers, see 17.3 in \cite{cormen09introduction}. An algorithm on power circuits is said to run in amortized time $t$, if the real running time is bounded by $t+\pot(\Pi)-\pot(\Pi^\prime)$, where $\Pi$ is the input and $\Pi^\prime$ is the resulting circuit. Thus, an algorithm may take longer than its indicated amortized time, as long as it decreases the potential by the same amount. The potential can be thought of as a debt which is accumulated whenever the algorithm does not terminate in time. 

\begin{definition}
For a power circuit $\Pi=(\Gamma,\delta)$, the number of maximal chains is denoted $\ch(\Pi)$. The \emph{potential} of $\Pi$\index{potential of a power circuit} is $\pot(\Pi)=\ch(\Pi)\cdot\abs{\Gamma}$. 

In a situation where $\Pi=(\Gamma\cupi U,\delta)$ is a graph with an embedded power circuit $\Pi^\prime=(\Gamma,\delta\vert_{\Gamma\times\Gamma})$, we define $\ch(\Pi):=\ch(\Pi^\prime)$ and $\pot(\Pi):=\pot(\Pi^\prime)$. 
\end{definition}

\begin{lemma}\label{lem:insert_node}
There is a procedure \InsNode, which takes a treed power circuit $\Pi=(\Gamma,\delta)$ and a compact marking $M$ in $\Pi$ and which inserts a new node $u$ with $\Lambda_u=M$ into $\Pi$. \InsNode runs in amortized time $\O(\abs{\Gamma})$. 
\end{lemma}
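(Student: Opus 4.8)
The plan is to insert the new node $u$ with $\Lambda_u = M$ into the ordered list, set its bit-vector entry, and then repair the tree $T$ so that all five treedness properties of Definition~\ref{def:treed_pc} hold again. First I would locate where $u$ belongs in the sorted list $\Gamma = (u_1,\dots,u_n)$: since $M$ is compact and is represented as a leaf of $T$, Proposition~\ref{prop:power_sum_properties}\refenumproperty{prop:power_sum_properties:lex} says its position in value order is exactly its position among the leaves of $T$ read left-to-right, so this takes $\O(\abs{\Gamma})$ time by walking to that leaf. There is a subtlety: $u$ might evaluate to the same value as an existing node $u_j$ (namely if $M$ equals some $\Lambda_{u_j}$, which is also compact and hence corresponds to the same leaf). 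In that case $u$ is redundant and we do nothing — but the lemma statement presumes a genuine insertion, so I would note that this case is handled trivially, or that callers ensure $M$ is new. Otherwise $u$ goes strictly between two consecutive leaves/nodes $u_{j-1}$ and $u_j$; comparing the compact power sums $\Lambda_u$ with $\Lambda_{u_j}$ via Proposition~\ref{prop:power_sum_properties}\refenumproperty{prop:power_sum_properties:inc} (reading the two root-to-leaf paths) tells us whether $q\cdot\e(u) = \e(u_j)$, so we can set the new bit.

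The real work is maintaining the tree. Adding $u$ increases $\abs{\Gamma}$ by one, so every root-to-leaf path in $T$ must grow by one edge: at the level corresponding to the new node's rank, every existing node of $T$ at the level just below must acquire an intermediate node carrying the label that the marking assigns to $u$. For every marking currently stored as a leaf, that label is $0$ unless the marking previously had $u$ "virtually" in its support — but since $u$ is brand new, no old marking contains it, so the inserted label is always $0$. Hence the tree surgery is: at the appropriate level, splice a single new child (with edge label $0$) under each node of the level above, and reconnect. Updating the per-level node lists (property~\refenumproperty{def:treed_pc:lev}) and re-checking compactness of the affected power sums is then routine — inserting a zero coefficient cannot break compactness of an already-compact power sum. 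One also has to check property~\refenumproperty{def:treed_pc:mark}: inserting $u$ may change which nodes are tops of maximal chains, because the new node could extend or split a chain; if $u$ lands as the new top of a chain and is in nobody's support that is fine, but if its insertion makes some previously-top marked node no longer the top, that is an improvement; the only danger is the reverse, and I would argue it cannot happen because $\Lambda_u = M$ is a marking that was already present, so $u$'s successors were already being "used" — actually the cleaner route is that property~\refenumproperty{def:treed_pc:mark} constrains marked nodes, and inserting an unmarked node $u$ can only lengthen chains, never promote an existing marked node to a top, unless $u$ sits directly above a marked top, which would require $q\cdot\e(\text{marked node}) = \e(u)$; this edge case I would resolve by observing the marked node was not a top before precisely because something above it existed, and that something is still there.

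The amortized time bound is where the potential function $\pot(\Pi) = \ch(\Pi)\cdot\abs{\Gamma}$ enters. The tree surgery itself touches one node per element of the level above the insertion point, which is $\O(\abs{T\text{'s width}})$ — and this can exceed $\O(\abs{\Gamma})$, since $T$ may have many leaves. This is the main obstacle. The resolution I expect: the number of leaves of $T$ is tied to the number of stored compact markings, which in the intended usage is $\O(1)$ beyond the $\abs{\Gamma}$ successor markings, so the tree has $\O(\abs{\Gamma})$ nodes total and the surgery is genuinely $\O(\abs{\Gamma})$; alternatively, any excess cost is charged against a decrease in $\pot$. Concretely, if inserting $u$ merges or shortens maximal chains — which happens whenever $u$ fills a gap so that $q\cdot\e(u_{j-1}) = \e(u)$ and/or $q\cdot\e(u) = \e(u_j)$ — then $\ch(\Pi)$ drops, releasing potential proportional to $\abs{\Gamma}$ that pays for any extra bookkeeping; and if no chains merge, the cheap case applies. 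So the running time is $\O(\abs{\Gamma}) + \pot(\Pi) - \pot(\Pi')$, i.e.\ amortized $\O(\abs{\Gamma})$. I would present the two cases — "insertion merges chains" versus "insertion is chain-neutral" — and verify the potential accounting in each, leaving the purely mechanical tree-pointer updates described in prose.
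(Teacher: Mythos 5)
Your construction matches the paper's: locate $u$'s rank via the leaf of $M$ in the tree, set the bit--vector entry by comparing $M$ with the successor markings of the neighbouring nodes, and stretch the tree by one level whose edges are all labelled $0$ because no existing marking contains the new node. The extra care you take with duplicate values and with Condition \refenumproperty{def:treed_pc:mark} is harmless (the paper silently skips both), and your observation that each tree level has $\O(\abs{\Gamma})$ nodes is the right reason the surgery is $\O(\abs{\Gamma})$ actual time.

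The gap is in the potential accounting, and you have the direction of the argument backwards. The actual running time of \InsNode is already $\O(\abs{\Gamma})$ in the worst case; no released potential is needed to ``pay for extra bookkeeping''. The reason the lemma says \emph{amortized} $\O(\abs{\Gamma})$ is that the potential $\pot(\Pi)=\ch(\Pi)\cdot\abs{\Gamma}$ can \emph{increase}: $\abs{\Gamma}$ always grows by one, and if $u$ neither extends nor merges existing chains it becomes a new singleton maximal chain, so $\ch$ grows by one as well, giving $\pot(\Pi^\prime)-\pot(\Pi)\le\abs{\Gamma}+\ch(\Pi)+1$. Under the paper's definition, amortized time $t$ means the real time is bounded by $t+\pot(\Pi)-\pot(\Pi^\prime)$, so this increase must be added to the cost and one must check it is $\O(\abs{\Gamma})$ --- which it is, since $\ch(\Pi)\le\abs{\Gamma}$. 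Your write-up only treats the cases where $\ch$ stays the same or drops (where the bound is trivially fine) and never verifies the case $\ch(\Pi^\prime)=\ch(\Pi)+1$, which is exactly the case that forces the ``amortized'' qualifier and is the one the paper's proof addresses. The fix is one line, but as written the analysis does not establish the stated bound.
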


\begin{proof}
Since $M$ is compact, the position of $v$ in the ordered list of nodes is determined by the position of the leaf corresponding to $M$ in the tree. The bit vector can be adjusted by comparing $M$ to the successor markings of the nodes immediately before and after $v$. 

The tree has to be ``stretched'' by inserting a new level corresponding to $v$. All edges on this level are labelled $0$, since no marking contains the new node $v$. Using the lists of nodes of the same level, this takes $\O(\abs{\Gamma})$ time. 

The insertion of $v$ increases $\abs{\Gamma}$ and in some cases also $\ch(\Pi)$. Thus, the potential grows by up to $\abs{\Gamma}+\ch(\Pi)+1\in\O(\abs{\Gamma})$. 
\end{proof}

Incrementing a compact marking by $1$ is easier than doing the same with an arbitrary marking. Let $M$ be a compact marking in a treed circuit, in which $u_1,u_2$ are the unique nodes with values $1$ and $2$ respectively (assuming they exist). If $M(u_1)<q-1$, $M(u_1)$ can be incremented directly. If $M(u_1)=q-1$, then $M(u_2)<q-1$ thanks to compactness. Set $M(u_1):=0$ and increment $M(u_2)$. However, the resulting marking is not always compact. 

\begin{lemma}\label{lem:compactify_marking}
There is a procedure \CompMark which, given an arbitrary marking $M$ in a treed circuit $\Pi=(\Gamma,\delta)$, computes a compact marking $M^\prime$ with $\e(M^\prime)=\e(M)$ in $\O(\abs{\Gamma})$ time. \end{lemma}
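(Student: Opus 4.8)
The plan for \CompMark is to reduce to Proposition~\ref{prop:compactify_power_sum} by exploiting the maximal-chain structure of a reduced circuit. A marking $M$ on the sorted node list $\Gamma=(u_1,\dots,u_n)$ corresponds to the power sum $\sum_i M(u_i)\,q^{e_i}$, where $q^{e_i}=\e(u_i)$ and $e_i=\e(\Lambda_{u_i})$; within a maximal chain the exponents $e_i$ are consecutive integers, while between two maximal chains they jump by at least $2$ (the separating bit is $0$, and since node values are powers of $q$ their ratio is a power of $q$ that is at least $q^2$). First I would read the maximal chains $C_1,\dots,C_r$ off the bit vector in $\O(\abs{\Gamma})$ time. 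For a chain $C=(w_0,\dots,w_{k-1})$ with $\e(w_{\ell+1})=q\,\e(w_\ell)$, the restriction of $M$ to $C$ is, after dividing out $\e(w_0)$, the dense power sum $P_C=\sum_{\ell=0}^{k-1}M(w_\ell)\,q^\ell$, and $\e(M)=\sum_C\e(w_0^{(C)})\cdot\e(P_C)$.

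The second step is to compactify each $P_C$ on its own with Proposition~\ref{prop:compactify_power_sum}, obtaining a compact power sum $P_C'$ of the same value in $\O(k)$ time, and to let $M'$ assign the coefficients of $P_C'$ to the nodes of $C$. Then $\e(M')=\e(M)$, and the running time is $\O(\abs{\Gamma})+\sum_C\O(k_C)=\O(\abs{\Gamma})$, to which one adds $\O(\abs{\Gamma})$ if the result has to be inserted as a root-to-leaf path into the tree (and the level lists updated). Two points need to be checked: (i) the coefficients of $P_C'$ still occupy the positions $0,\dots,k-1$ of $C$, so that $M'$ is a marking of the unchanged circuit with no new nodes; and (ii) the power sum obtained by gluing the $P_C'$ back together is globally compact, not merely compact on each block.

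Point~(i) is exactly where treed property~\ref{def:treed_pc:mark} enters: since the support of a marking never contains the top node of a maximal chain, $M$ restricted to $C$ is supported on positions $0,\dots,k-2$, so in Proposition~\ref{prop:compactify_power_sum} one has ``$n=k-2$'' and the compact output lives on positions $\le n+1=k-1$. For point~(ii) I would argue that in the glued power sum every window on which a rule of $\mathcal{P}$ could act stays inside the exponent range of a single chain together with the unoccupied exponent directly above it: the left end of such a window must carry a nonzero coefficient and its interior coefficients must equal $\pm(q-1)\neq 0$, so the window cannot contain an exponent realised by no node, and between consecutive chains there is at least one such exponent. Since every $P_C'$ is compact, no such rule fires, hence the glued power sum --- and therefore $M'$ --- is compact. (Here ``compact'' for a treed marking is meant with respect to the power sum over the node exponents $e_i$, i.e.\ the value $\e(M)$ written in spread-out form; since chain-top positions carry coefficient $0$ in $M$, this coincides block by block with the coefficient sequence stored along a root-to-leaf path.)

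The main obstacle I expect is precisely this boundary bookkeeping: without invariant~\ref{def:treed_pc:mark} a carry produced inside a chain could reach the chain's top node with a nonzero coefficient and spill into the exponent gap above it, or even into the next chain, which would force a \PBC call for each affected chain and destroy the linear-time bound. (Conversely, $M'$ itself may legitimately place a coefficient on a chain top when $\abs{\e(P_C)}$ is close to the maximum $q^{k-1}-1$, so preserving property~\ref{def:treed_pc:mark} downstream --- if one insists on it --- does require prolonging those chains separately.) Everything else --- splitting into chains, invoking Proposition~\ref{prop:compactify_power_sum}, and rebuilding the tree path --- is routine.
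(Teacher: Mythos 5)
Your proof is correct and follows essentially the same route as the paper's: decompose $M$ along the maximal chains read off the bit vector, apply Proposition~\ref{prop:compactify_power_sum} to each chain's restriction, and use Property~\ref{def:treed_pc:mark} to guarantee the one extra slot the compact output may need. Your additional observations --- that inter-chain exponent gaps of size at least $2$ prevent any rule of $\mathcal{P}$ from firing across chain boundaries, and that Property~\ref{def:treed_pc:mark} may fail for $M'$ afterwards --- match what the paper leaves implicit or defers to the remark immediately following the lemma.
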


\begin{proof}
This is based on the following observation: if $S=\sum_{i=n}^m\alpha_i\cdot q^i$ is a power sum, then in the corresponding compact power sum $T$ only the coefficients of $q^{n},\ldots,q^{m+1}$ can be non-zero. Hence, for each individual chain $C\subseteq\Gamma$, we can apply the algorithm from Proposition \ref{prop:compactify_power_sum} to the power sum defined by $M\vert_C$. Note that a node for $q^{m+1}$ exists due to Property \ref{def:treed_pc:mark} of the treed circuit $\Pi$. 
\end{proof}

Strictly speaking, we have not proved that the resulting circuit is treed. In fact, Condition \ref{def:treed_pc:mark} demanding an unmarked node at the top of every chain might have been lost during compactification. We will fix this shortly. For now, we content ourselves with the observation that the problem does not arise when \CompMark is called during \PBC. In this special case, the successor marking of the new node with value $q^i$ only uses nodes with much smaller values from the base chain (approximately the first $\log_q i$). This shows:

\begin{corollary}\label{cor:prolong_base_chain_treed}
The procedure \PBC given in Algorithm \ref{alg:prolong_base_chain} can be adapted for treed power circuits. The amortized time complexity is $\O(\abs{\Gamma})$. \qed
\end{corollary}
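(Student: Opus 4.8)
The plan is to realize the treed version of \PBC as a short composition of routines that are already available — \CompMark and \InsNode — and to spend essentially all of the work on checking that the output is still treed, the only delicate point being Condition~\ref{def:treed_pc:mark} of Definition~\ref{def:treed_pc}.

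First I would locate, exactly as in Algorithm~\ref{alg:prolong_base_chain}, the smallest index $i\ge 0$ with $\e(v_i)>q^i$, by walking the ordered node list together with the bit vector; this costs $\O(\abs{\Gamma})$. Since in any reduced circuit one has $\e(v_k)\ge q^k$ for every $k$ (induction: $\e(v_k)>\e(v_{k-1})\ge q^{k-1}$ and $\e(v_k)$ is a power of $q$), minimality of $i$ forces $\e(v_k)=q^k$ for $k<i$. Hence $v_0,\dots,v_{i-1}$, with values $1,q,\dots,q^{i-1}$, is precisely the base chain, and the node $u$ we must create has value $q^i$ and belongs just after $v_{i-1}$ in the ordered list. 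If $i=0$ there is no node of value $1$; we simply invoke \InsNode on the empty (hence compact) marking and are done, so assume $i\ge 1$. Writing $i$ in base $q$ as $i=\sum_\ell\alpha_\ell q^\ell$, let $M$ be the marking $M(v_\ell)=\alpha_\ell$ on the base chain, so $\e(M)=i$ and $\supp M\subseteq\{v_0,\dots,v_j\}$ with $q^j\le i$, i.e.\ $j\le\log_q i$. Apply \CompMark to $M$ to obtain a compact marking $M'$ of the same value $i$ (time $\O(\abs{\Gamma})$ by Lemma~\ref{lem:compactify_marking}), then call \InsNode on $M'$. The latter (amortized $\O(\abs{\Gamma})$ by Lemma~\ref{lem:insert_node}) inserts a node $u$ with $\Lambda_u=M'$, hence $\e(u)=q^{\e(M')}=q^i$, places it between $v_{i-1}$ and $v_i$ (or at the end of the list if $v_i$ does not exist), stretches the tree by one level of all-zero edges, and repairs the surrounding bit vector. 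Because $q\cdot\e(v_{i-1})=q^i=\e(u)$, the bit between $v_{i-1}$ and $u$ is set, so $u$ genuinely prolongs the base chain; the total cost is amortized $\O(\abs{\Gamma})$.

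It remains to verify that the result is treed. Reducedness, the sorted list, the bit vector, the tree and the level lists are maintained by \InsNode; the successor markings of the old nodes are untouched, and $\Lambda_u=M'$ is compact by construction, which settles Conditions~\ref{def:treed_pc:red}, \ref{def:treed_pc:tree}, \ref{def:treed_pc:comp} and~\ref{def:treed_pc:lev}. For Condition~\ref{def:treed_pc:mark}, the only change to the chain structure is that the base chain grows from $v_0,\dots,v_{i-1}$ to $v_0,\dots,v_{i-1},u$ (and possibly merges with a chain above it, if $\e(v_i)=q^{i+1}$): the former top $v_{i-1}$ becomes interior, and no node that was previously not a chain top becomes one except possibly $u$ itself. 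So Condition~\ref{def:treed_pc:mark} for every pre-existing marking is preserved, the set of non-top nodes only having grown, and none of those markings contains the brand-new node $u$. For the single new marking $\Lambda_u=M'$: \CompMark raises the top position of a power sum inside each chain by at most one, so $\supp M'\subseteq\{v_0,\dots,v_{j+1}\}$ with $j\le\log_q i$; since $\log_q i\le i-2$ for $i\ge 4$, the node $v_{j+1}$ sits strictly below the new top of the base chain (which is $u$ at list position $i$, or some node even higher up), so $\supp M'$ avoids $u$ and every other chain top. The handful of small values $i\in\{1,2,3\}$ must be checked by hand, but there $M$ is already compact (namely $\alpha_0\cdot q^0$ with $\alpha_0\le q-1$, or $q^1$, or — only when $q=2$ and $i=3$ — $q^0+q^1$, which \CompMark turns into $-q^0+q^2$), its support lies among the first three base-chain nodes, and in every case $\supp M'$ still avoids the new top $u$ at position $i$; moreover in these cases the node $v_{j+1}$ needed by \CompMark is present because it lies within the current base chain. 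This establishes Condition~\ref{def:treed_pc:mark}.

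The main obstacle I anticipate is exactly this last verification: reconciling the ``spill by one position'' behaviour of \CompMark with Condition~\ref{def:treed_pc:mark}'s demand for an unmarked ceiling node above the support of $\Lambda_u$. It works out only because $\Lambda_u$ encodes the tiny number $i$ and therefore lives near the bottom of a base chain of length $i$, far below its top — the observation already flagged in the remark preceding the statement. Everything else is bookkeeping built on top of the \CompMark and \InsNode primitives.
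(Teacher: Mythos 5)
Your proposal is correct and follows the same route as the paper: the paper justifies this corollary solely by the observation that the successor marking of the new node of value $q^i$ encodes the small number $i$ and hence lives in the first roughly $\log_q i$ positions of the base chain, so compactification cannot disturb Condition \ref{def:treed_pc:mark}. You have simply carried out that observation in full detail (including the $i\le 3$ edge cases), which is a faithful elaboration rather than a different argument.
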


\begin{corollary}\label{cor:increment_compact_marking}
There is a procedure \IncMark which, given a compact marking $M$ in a treed power circuit $\Pi=(\Gamma,\delta)$, computes a compact marking $M^\prime$ with $\e(M^\prime)=\e(M)+1$ and leaves $\Pi$ treed. \IncMark takes $\O(\abs{\Gamma})$ amortized time and increases the circuit size by $1+\ch(\Pi)-\ch(\Pi^\prime)$. 
\end{corollary}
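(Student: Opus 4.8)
The plan is to combine the naive incrementation of a compact marking (described in the paragraph preceding Lemma~\ref{lem:compactify_marking}) with the machinery of \CompMark and \PBC, and then carefully restore Property~\ref{def:treed_pc:mark}, which \CompMark alone need not preserve. First I would handle the trivial subcase: if the node $u_1$ of value $1$ exists and $M(u_1)<q-1$, simply increment $M(u_1)$; the resulting power sum (restricted to the base chain) stays compact by Proposition~\ref{prop:power_sum_properties}~\ref{prop:power_sum_properties:inc}, the tree leaf moves by at most a bounded amount along the bottom level, and nothing else changes, so $\O(\abs{\Gamma})$ amortized time is immediate. If $u_1$ does not exist, create it via (the treed version of) \PBC, which by Corollary~\ref{cor:prolong_base_chain_treed} costs $\O(\abs{\Gamma})$ amortized and accounts for the $\ch(\Pi)-\ch(\Pi^\prime)$ discrepancy in the circuit-size bound (prolonging the base chain can merge two maximal chains into one, decreasing $\ch$ by $1$ while increasing $\abs{\Gamma}$ by $1$).

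For the general case $M(u_1)=q-1$ (or more generally when a carry is triggered), I would: (i) add $q^0$ to the marking, obtaining an ordinary, possibly non-compact marking $M''$ with $\e(M'')=\e(M)+1$ whose support meets only the base chain plus $O(1)$ extra coefficients that may have spilled just above it; (ii) run \CompMark on the base chain, which by Lemma~\ref{lem:compactify_marking} takes $\O(\abs{\Gamma})$ time and yields a compact marking $M'$ with the right value. The key point is that the carry propagation stays \emph{within the base chain}: since $M$ was compact, its base-chain coefficients are bounded, so adding $1$ produces coefficients in $\{-q+1,\dots,q\}$ and by the observation in the proof of Lemma~\ref{lem:compactify_marking} the compactified sum lives on the exponents $q^0,\dots,q^{m+1}$ where $q^m$ is the top of the chain — and by Property~\ref{def:treed_pc:mark} of the input treed circuit, a node for $q^{m+1}$ already exists (it is the next node of the chain, which is unmarked precisely because $u_m$ was, being below the top, allowed to be marked while the top is not). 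Hence no new node is needed here, and the tree is updated by re-reading and overwriting the $O(\abs{\Gamma})$ leaf labels along the base-chain levels for the single marking $M$.

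Finally I would address the restoration of Property~\ref{def:treed_pc:mark}. After incrementing, the top node of the (base) chain might now be marked by $M'$ — this is exactly the failure mode flagged in the remark after Lemma~\ref{lem:compactify_marking}. The fix is to call \PBC once more to prolong the base chain by a fresh unmarked node on top; by Corollary~\ref{cor:prolong_base_chain_treed} this is $\O(\abs{\Gamma})$ amortized, and again it changes $\abs{\Gamma}$ and $\ch(\Pi)$ in lockstep so that the total circuit growth is $1+\ch(\Pi)-\ch(\Pi^\prime)$ as claimed (one deliberate prolongation, plus whatever chain-merging \PBC causes). I would then note that all other treed-circuit invariants are untouched: the node list, bit vector, and successor markings $\Lambda_u$ are unchanged except along the base chain where we re-verified the bit vector using Proposition~\ref{prop:power_sum_properties}~\ref{prop:power_sum_properties:inc}, and the per-level node lists are extended by the one new level. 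Summing the $\O(\abs{\Gamma})$ contributions gives the stated amortized bound. The main obstacle I expect is (a) pinning down that the carry never escapes the base chain — equivalently that $m+1$ is enough headroom and that the needed node is present — and (b) accounting honestly for the interplay between $\abs{\Gamma}$ and $\ch(\Pi)$ across the (up to two) \PBC calls so that the precise size bound $1+\ch(\Pi)-\ch(\Pi^\prime)$ comes out rather than a crude $O(1)$.
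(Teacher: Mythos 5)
Your overall strategy is the paper's: increment touches only the two bottom nodes of the base chain, compactification stays within the base chain, and Condition \refenumproperty{def:treed_pc:mark} is restored by prolonging the base chain, with the potential $\pot(\Pi)=\ch(\Pi)\cdot\abs{\Gamma}$ absorbing the cost. Two details deserve correction. First, in your step (i) you ``add $q^0$'' and allow a coefficient equal to $q$, which is not in $D$, so the object you hand to \CompMark is not a marking and Proposition \ref{prop:compactify_power_sum} does not apply to it as stated. The clean move, which is exactly the one in the paragraph preceding Lemma \ref{lem:compactify_marking} that you cite, is: if $M(u_1)=q-1$ then compactness forces $M(u_2)<q-1$, so set $M(u_1):=0$ and increment $M(u_2)$; all coefficients stay in $D$ and the carry provably stops at $u_2$. (Your extra \PBC call to ``create $u_1$'' is unnecessary: a nonempty reduced circuit has exactly one leaf, and it has value $1$.)

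Second, and more substantively, the paper does not call \PBC ``once more'' at the end: it calls \PBC \emph{repeatedly} until the newly created node is the top of the base chain. A single call may land the new node in a gap that links the base chain to the next maximal chain; in that event $\ch$ drops by one, the released potential pays for the $\O(\abs{\Gamma})$ work done so far, and the procedure repeats. This loop is what produces the \emph{exact} growth figure $1+\ch(\Pi)-\ch(\Pi^\prime)$ (with $k$ iterations one adds $k$ nodes and performs $k-1$ merges), whereas your accounting --- ``one deliberate prolongation, plus whatever chain-merging \PBC causes'' --- does not derive that formula and would at best give an inequality. You correctly identify this bookkeeping as the delicate point, but the missing idea is the repeat-until-top loop together with the observation that every non-terminating iteration strictly decreases $\ch(\Pi)$ and thereby finances both its own running time and its contribution to the size bound.
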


\begin{proof}
As discussed above, incrementing $M$ by $1$ only affects the two nodes with values $1$ and $2$. As a consequence, the compactification process of the incremented marking is limited to the base chain of $\Pi$. 

In order to fulfill Condition \ref{def:treed_pc:mark} of Definition \ref{def:treed_pc}, we invoke \PBC which creates a new node $u$. If $u$ is the new top node of the base chain, we are done. Otherwise the insertion of $u$ has linked two maximal chains, decreasing $\ch(\Pi)$ by one. We use the released potential to pay for the $\O(\abs{\Gamma})$ time used so far and repeat. 
\end{proof}

For treed circuits, the procedure \ETree given in Algorithm \ref{alg:extend_tree} replaces \ERed. 

\begin{algorithm}\label{alg:extend_tree}
\Input{a graph $\Pi=(\Gamma\cupi U,\delta)$ such that $(\Gamma,\delta\vert_{\Gamma\times\Gamma})$ is a treed power circuit, a list $\mathcal{M}=(M_1,\ldots M_m)$ of markings in $\Pi$ with $\supp M_i\not\subseteq\Gamma$}
\Output{a treed power circuit $\Pi^\prime=(\Gamma^\prime,\delta^\prime)$ with $\Gamma\subseteq\Gamma^\prime$ and $\delta^\prime\vert_{\Gamma\times\Gamma}=\delta\vert_{\Gamma\times\Gamma}$, a list $\mathcal{M}^\prime=(M_1^\prime,\ldots,M_m^\prime)$ of compact markings in $\Pi^\prime$ such that $\e(M_i)=\e(M_i^\prime)$}
\BlankLine
Compute a topological order $U=(u_1,\ldots,u_k)$.\;
\For{$i=1,\dots,k$}
{
	$U:=U\setminus\{u_i\}$\;
	\lIf{$\Gamma=\emptyset$}{set $\Gamma:=\{u_1\}$, create a tree for $u_1$ and insert all markings with support $\{u_1\}$. Continue with $i=2$.}\;
	Traversing the lowest level of the tree, find the first node $v_j$ in the ordered list $\Gamma=(v_1,v_2,\ldots)$ such that $\e(u_i)\le\e(v_j)$.\label{alg:extend_tree:search}\;
	\lIf{$\e(\Lambda_{u_i})<0$}{abort the algorithm.}\;
	\eIf{$\e(u_i)<e(v_j)$ (or no such $v_j$ exists)}
	{
		Insert $u_i$ into $\Gamma$ using \InsNode.\;
	}
	({\ $\e(u_i)=\e(v_j)$})
	{
		Prolong the maximal chain $v_j,v_{j+1},\ldots,v_k$ by a new node by creating a copy of $\Lambda_{v_k}$ and applying \IncMark and \InsNode.\;\label{alg:extend_tree:prolong_chain}
		\ForEach{marking $M$ with $u_i\in\supp M$}
		{
			Replace $u_i$ in $M$ by $v_j$ by adding $M(u_i)$ to $M(v_j)$ and setting $M(u_i):=0$. If, after this, $M(v_j)\not\in D$, add $\pm q$ to $M(v_j)$ and decrease $M(v_{j+1})$ by $\pm 1$. Repeat if $M(v_{j+1})\not\in D$ and so on.
		}
	}
	\lForEach{marking $M\in\mathcal{M}\cap\{\Lambda_u\>:\>u\in U\}$ of which $u_i$ was the last node not in $\Gamma$\label{alg:extend_tree:comp_marks}}{$\CompMark(M)$.}\;
	\Repeat{the newly created node is the top of the chain}
	{\label{alg:extend_tree:restore_inv_top}
		Prolong the maximal chain starting at $v_j$ using \IncMark and \InsNode
	}
}
\caption{Procedure \ETree}
\end{algorithm}

\begin{proposition}\label{prop:extend_tree}
The procedure \ETree is correct and runs in amortized time $\O((\abs{\Gamma}+\abs{U})\cdot(\abs{U}+m))$. The circuit growth $\abs{\Gamma^\prime\setminus\Gamma}$ is bounded by $4\cdot\abs{U}+\ch(\Pi)-\ch(\Pi^\prime)$. 
\end{proposition}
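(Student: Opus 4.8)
\medskip\noindent\emph{Proof plan.} I would argue in close parallel to Proposition~\ref{prop:extend_reduce}, the new features being that the tree and all five conditions of Definition~\ref{def:treed_pc} must be preserved and that the logarithmic factors disappear. The backbone is a loop invariant for the \textbf{for}-loop of Algorithm~\ref{alg:extend_tree}: at the start of iteration $i$, the graph $(\Gamma,\delta\vert_{\Gamma\times\Gamma})$ is a treed power circuit whose node values coincide with those of the input; each marking in $\mathcal{M}$ and each successor marking $\Lambda_u$ ($u\in U$) has been rewritten -- by replacing already-processed $U$-nodes with equivalued $\Gamma$-nodes and propagating carries along chains -- so that its value is unchanged and its support lies in $\Gamma\cupi U$; and $\Lambda_{u_i}$ is a \emph{compact} marking supported in $\Gamma$ (ensured for $i=1$ by compactifying $\Lambda_{u_1}$ first, and for $i>1$ because step~\ref{alg:extend_tree:comp_marks} of the iteration that deleted the last $U$-node from $\supp\Lambda_{u_i}$ made it compact -- a marking's $U$-support only shrinks). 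Compactness of $\Lambda_{u_i}$ is exactly what makes step~\ref{alg:extend_tree:search} run in $\O(\abs{\Gamma})$ instead of $\O(\abs{\Gamma}\log\abs{\Gamma})$: since $\Lambda_{u_i}$ and all $\Lambda_{v_j}$ are compact, Proposition~\ref{prop:power_sum_properties}\,\ref{prop:power_sum_properties:lex} identifies the position of $u_i$ in the sorted node list with the position of the leaf $\Lambda_{u_i}$ in the tree, located by one root-to-leaf descent. The abort just after step~\ref{alg:extend_tree:search} is reached precisely when $\e(\Lambda_{u_i})<0$, which by Lemma~\ref{lem:pc_condition} is exactly the case in which the input graph is not a power circuit.

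\medskip\noindent Next I would show that each iteration re-establishes the invariant. In the no-collision case this is Lemma~\ref{lem:insert_node}: \InsNode inserts $u_i$, keeps the circuit treed and changes no value. In the collision case ($\e(u_i)=\e(v_j)$) the prolongation in step~\ref{alg:extend_tree:prolong_chain} gives the carries room; then, in every marking $M$ with $u_i\in\supp M$, folding $u_i$ into $v_j$ and pushing the overflow one chain-node at a time -- each step turning $\alpha\,q^\ell+\beta\,q^{\ell+1}$ into $(\alpha\mp q)q^\ell+(\beta\pm1)q^{\ell+1}$ along $v_\ell,v_{\ell+1}$ with $q\,\e(v_\ell)=\e(v_{\ell+1})$ -- leaves all values intact, and the carry stops no later than at the node just created. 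Step~\ref{alg:extend_tree:comp_marks} restores compactness of the markings whose support has just entered $\Gamma$ (Lemma~\ref{lem:compactify_marking}), and the \textbf{repeat} loop of step~\ref{alg:extend_tree:restore_inv_top} restores Condition~\ref{def:treed_pc:mark}, which the carry or compactification may have broken by marking the top of a maximal chain: each pass prolongs the offending chain by one fresh unmarked node via \IncMark and \InsNode, stopping once that node is the chain top, while every earlier pass has fused two maximal chains, lowering $\ch(\Pi)$. The delicate point here is the argument -- via Lemma~\ref{lem:compactify_marking}'s observation that a compact power sum occupies only one extra power of $q$ per chain, together with the invariant that chain tops are unmarked -- that \CompMark can endanger Condition~\ref{def:treed_pc:mark} only along the chain through $v_j$, so that iterating step~\ref{alg:extend_tree:restore_inv_top} is enough. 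Collecting these, the output is treed and all tracked markings are compact with unchanged values.

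\medskip\noindent For the bounds I would amortise against $\pot(\Pi)=\ch(\Pi)\cdot\abs{\Gamma}$; with $n=\abs{\Gamma}+\abs{U}$ and the growth bound proved in tandem, $\abs{\Gamma}=\O(n)$ throughout. The topological sort costs $\O(\abs{U}^2)$; per iteration the search is $\O(\abs{\Gamma})$, \InsNode and \IncMark are $\O(\abs{\Gamma})$ amortised (Lemma~\ref{lem:insert_node}, Corollary~\ref{cor:increment_compact_marking}), and \CompMark is $\O(\abs{\Gamma})$ and invoked at most once per marking, i.e.\ $\O(\abs{U}+m)$ times. The one step whose per-call cost can exceed $\O(\abs{\Gamma})$ is the carry propagation, handled exactly as in Proposition~\ref{prop:extend_reduce}: $C=\sum_M\sum_v\abs{M(v)}$ does not increase when $u_i$ is folded into $v_j$ and drops by at least $q-1$ per carry moved up a chain, while $C\le(q-1)\,n\,(\abs{U}+m)$ initially, so all carry work totals $\O(n\,(\abs{U}+m))$. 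For the growth bound I would tally node creations: each element of $U$ is either inserted ($+1$) or triggers a collision; a collision triggers the forced prolongation plus the passes of step~\ref{alg:extend_tree:restore_inv_top}; a chain prolongation adds $\O(1)$ nodes, with \IncMark's contribution telescoping through $\ch$; and every non-final pass of step~\ref{alg:extend_tree:restore_inv_top} decreases $\ch$, while the insertions raise $\ch$ by only $\O(\abs{U})$ overall. The main obstacle -- the step I would need to get exactly right -- is this last piece of bookkeeping: one must bound the total number of chain prolongations in terms of $\abs{U}$ and $\ch(\Pi)-\ch(\Pi^\prime)$ and check that the \IncMark/\InsNode calls inside the repeat loop stay within the potential budget $\ch\cdot\abs{\Gamma}$, so that the amortised running time is $\O((\abs{\Gamma}+\abs{U})\cdot(\abs{U}+m))$ and the growth is at most $4\abs{U}+\ch(\Pi)-\ch(\Pi^\prime)$. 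Everything else is a routine adaptation of Proposition~\ref{prop:extend_reduce}.
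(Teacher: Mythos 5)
Your proposal follows essentially the same route as the paper's proof: the same two loop invariants (compactness of markings whose support has entered $\Gamma$, and treedness including Condition \ref{def:treed_pc:mark}), the tree-based search replacing binary search, the carry-counting argument inherited from Proposition \ref{prop:extend_reduce}, and amortization against $\pot(\Pi)=\ch(\Pi)\cdot\abs{\Gamma}$ with the growth tallied as $4\abs{U}+\ch(\Pi)-\ch(\Pi^\prime)$. The bookkeeping step you flag as delicate is handled in the paper exactly as you sketch, by the argument of Corollary \ref{cor:increment_compact_marking}, so there is no substantive difference.
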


\begin{proof}
The basic structure of \ETree is the same as that of \ERed, so we focus on the differences. Let $n=\abs{\Gamma}+\abs{U}$. 
Between cycles of the main loop, we keep up the following invariants for all markings $M\in\mathcal{M}\cup\{\Lambda_u\>:\>u\in U\}$:
\begin{enuminvariants}
\item\label{prop:extend_tree:inv_comp} If the support of $M$ is completely contained in $\Gamma$, $M$ is compact. 
\item\label{prop:extend_tree:inv_top} $\Gamma$ is treed. In particular, Condition \ref{def:treed_pc:mark} holds, which means that for all nodes $v\in\supp M\cap\Gamma$, the top node of the maximal chain starting at $v$ is not marked by $M$. 
\end{enuminvariants}
At the beginning, both invariants are true by definition. 

The time complexity for finding $v_j$ in step \ref{alg:extend_tree:search} is reduced to $\O(n)$ due to the representation of the markings in the tree. 

If $\Gamma$ contains no node with the same value as $u_i$, we can insert it as we did in \ERed. Remember that $\Lambda_{u_i}$ is compact due to \refenuminvariant{prop:extend_tree:inv_comp}. The case when $\e(u_i)=\e(v_j)$ also ressembles \ERed and has the same (now amortized) time bound. 

All markings $M$ whose support is completely contained in $\Gamma$ after the processing of $u_i$ must be made compact in order to regain \refenuminvariant{prop:extend_tree:inv_comp}. This is done in step \ref{alg:extend_tree:comp_marks}. After that, we have to restore Condition \ref{def:treed_pc:mark}. Using the same argument as in Corollary \ref{cor:increment_compact_marking}, step \ref{alg:extend_tree:restore_inv_top} takes $\O(n)$ amortized time and causes the circuit to grow by at most $2+\ch(\Pi)-\ch(\Pi^\prime)$ nodes. Together with step \ref{alg:extend_tree:prolong_chain}, the overall circuit growth during \ETree is bounded by $4\cdot\abs{U}+\ch(\Pi)-\ch(\Pi^\prime)$. 
\end{proof}

\begin{theorem}\label{thm:make_tree}(Treed analogon of Theorem \ref{thm:reduce})
There is a procedure \MTree which given a power circuit $\Pi=(\Gamma,\delta)$ and a list $\mathcal{M}=(M_1,\ldots,M_m)$ of markings in $\Pi$, returns a treed circuit $\Pi^\prime=(\Gamma^\prime,\delta^\prime)$ and a list $\mathcal{M}^\prime=(M_1^\prime,\ldots,M_m^\prime)$ of compact markings in $\Pi^\prime$ such that $\e(M_i)=\e(M_i^\prime)$ ($1\le i\le m$). \MTree takes $\O(\abs{\Gamma}^2+\abs{\Gamma}\cdot m)$ time and the size of $\Gamma^\prime$ is bounded by $4\abs{\Gamma}$. \qed
\end{theorem}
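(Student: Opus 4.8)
The plan is to proceed exactly as in the proof of Theorem~\ref{thm:reduce}: \MTree will invoke \ETree with the treed part taken to be the empty power circuit and the whole input circuit $\Pi=(\Gamma,\delta)$ handed over as the node set $U$ still to be processed. The empty power circuit is vacuously treed (empty node list, empty bit vector, and a one-node tree whose single root is a leaf reached by a path of $n=0$ edges, representing the empty — hence compact — power sum), and \ETree is moreover written to handle an empty starting $\Gamma$ explicitly, so the call is legitimate. The one formal mismatch is that \ETree requires $\supp M_i\not\subseteq\Gamma$, which with $\Gamma=\emptyset$ excludes the zero marking; I would therefore remove any empty $M_i$ from $\mathcal{M}$ before the call and re-insert it afterwards unchanged (it is already compact and has value $0$). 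Correctness — an output treed circuit $\Pi'$ with $\Gamma\subseteq\Gamma'$, $\delta'\vert_{\Gamma\times\Gamma}=\delta$, and compact markings $M_i'$ with $\e(M_i')=\e(M_i)$ — is then immediate from Proposition~\ref{prop:extend_tree}; since $\Pi$ is a power circuit by hypothesis, the abort branch of \ETree ($\e(\Lambda_{u_i})<0$) is never entered.

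It remains to read off the two quantitative claims from Proposition~\ref{prop:extend_tree}. For the time bound, that proposition gives an \emph{amortized} bound $\O\bigl((\abs{\Gamma}+\abs{U})(\abs{U}+m)\bigr)$, which here (treed part empty, $\abs{U}=\abs{\Gamma}$) is $\O(\abs{\Gamma}^2+\abs{\Gamma}\cdot m)$; by definition this means the real running time is at most that plus $\pot(\Pi_{\mathrm{in}})-\pot(\Pi_{\mathrm{out}})$. The input of this \ETree call has empty embedded power circuit, so $\pot(\Pi_{\mathrm{in}})=\ch(\emptyset)\cdot 0=0$, while $\pot(\Pi_{\mathrm{out}})=\ch(\Pi')\cdot\abs{\Gamma'}\ge 0$; hence the real running time is bounded by the amortized bound, namely $\O(\abs{\Gamma}^2+\abs{\Gamma}\cdot m)$. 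For the circuit growth, Proposition~\ref{prop:extend_tree} gives $\abs{\Gamma'\setminus\Gamma_{\text{treed part}}}\le 4\abs{U}+\ch(\Pi_{\mathrm{in}})-\ch(\Pi_{\mathrm{out}})$, and with the treed part empty this reads $\abs{\Gamma'}\le 4\abs{\Gamma}+0-\ch(\Pi')\le 4\abs{\Gamma}$.

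I do not anticipate a real obstacle: the statement stands to Proposition~\ref{prop:extend_tree} exactly as Theorem~\ref{thm:reduce} stands to Proposition~\ref{prop:extend_reduce}. The only points needing a moment's care are (i) verifying that the empty circuit satisfies all five clauses of Definition~\ref{def:treed_pc}, so that the \ETree call is well-formed; (ii) the bookkeeping converting the amortized bound of Proposition~\ref{prop:extend_tree} into a worst-case bound, which works precisely because the starting potential is $0$; and (iii) the harmless pre-/post-processing of zero markings forced by the hypothesis $\supp M_i\not\subseteq\Gamma$ of \ETree.
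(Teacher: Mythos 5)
Your proposal is correct and follows exactly the route the paper intends (the theorem is stated with an immediate \qed, mirroring how Theorem \ref{thm:reduce} is proved by invoking \ERed with empty reduced part and the whole circuit as $U$). Your additional care about the empty treed circuit, the zero-potential start converting the amortized bound into a worst-case one, and the $\supp M_i\not\subseteq\Gamma$ hypothesis is sound and fills in details the paper leaves implicit.
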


\begin{remark}\label{rem:pc_strategy}(Working with treed power circuits)\\
The usual strategy when solving a problem using power circuits is to create one power circuit and keep all integers as markings in this circuit. The power circuit is kept in treed form in order that comparisons can be done efficiently at any time. For each arithmetic operation, the markings corresponding to the operands are cloned, and the operation (addition or multiplication by a power of two) is performed on the clones. Finally, \ETree is called with the set of clones as $U$ to regain a treed circuit. This takes $\O((\abs{\Gamma}+\abs{U})\cdot\abs{U})$ time and this time bound also absorbs everything else done during the operation. 

For an estimate of the time complexity of an entire algorithm, we need to keep track of the circuit size $\abs{\Gamma}$ as well as the size $\omega$ of the supports of the markings. The latter is usually called the ``weight'' of the ciruit and determines the growth during each operation. The cost for one operation is $\O((\abs{\Gamma}+\omega)\cdot\omega)$. If we start with a comparatively small circuit and $\omega$ remains constant during the algorithm (which is normally the case), then after $s$ operations the circuit size is bounded by $\O(s\cdot\omega)$ and the time by $\O(s^2\omega^2)$. In our main application -- the solution of the word problem in Higman's group -- $s$ will turn out to be quadratic and $\omega$ linear in the input size $n$, leading to an $\O(n^6)$ time algorithm. 
\end{remark}

\begin{remark}
Seen from the outside, \ERed and \ETree as well as \Red and \MTree behave very much alike. In applications, all four procedures are used as ``black boxes'' and of the resulting circuits only the weaker property of reducedness is used. Therefore, in order to simplify nomenclature, we will speak of ``reduction'' and ``reduced'' circuits, subsuming both concepts. The reader may then choose whether to use the simpler reduction concept at the cost of logarithmic factors or to go through the more complicated procedures for treed power circuits with better asymptotic time complexity. 
\end{remark}

%================================================
\section{The Word Problem in Generalized Baumslag-Gersten groups}\label{sec:BG}

Although the main goal of this paper is to solve the word problem in Higmans' groups, we sidetrack briefly to present another generalization that is made possible by power circuits with arbitrary base $q$. 

The Baumslag-Gersten group is defined as
\begin{align*}
	\BG{2}&=\gr{a,b}{a^{a^b}=a^2}\cr
	&=\gr{a,b}{(bab^{-1})a(bab^{-1})^{-1}=a^2}\cr
	&\simeq\gr{a,b,t}{tat^{-1}=a^2,\,bab^{-1}=t}.
\end{align*}

This is an HNN extension of $\BS(1,2)$ generated by $a$ and $t$. Replacing $\BS(1,2)$ by $\BS(1,q)$ (for $q\ge 2$), we get a family of generalized Baumslag-Gersten groups:
\begin{align*}
	\BG{q}&=\gr{a,b}{a^{a^b}=a^q}\cr
	&\simeq\gr{a,b,t}{tat^{-1}=a^q,\,bab^{-1}=t}
\end{align*}

\begin{theorem}\label{thm:wp_BG}
	The word problem for the generalized Baumslag-Gersten group $\BG{q}$ is solvable in $\O(n^3)$ time. \qed
\end{theorem}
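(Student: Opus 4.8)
The plan is to treat $\BG{q}$ as an HNN extension of $H=\BS(1,q)=\gr{a,t}{tat^{-1}=a^q}$ with stable letter $b$ and associated isomorphism $\psi\colon\langle a\rangle\to\langle t\rangle$, $a^m\mapsto t^m$; this is exactly the presentation $\gr{a,b,t}{tat^{-1}=a^q,\ bab^{-1}=t}$. One then runs Britton's reduction for this HNN extension, dragging the $\BS(1,q)$-elements that show up along as markings in one base-$q$ power circuit kept in treed form, and shows that only $\O(n)$ elementary power-circuit operations are needed, each costing $\O(n^2)$.

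First I would fix a representation of elements of $\BS(1,q)$. Through the embedding $\BS(1,q)\hookrightarrow\Z[1/q]\rtimes\Z$, in which $t=(0,1)$, $a=(1,0)$ and $(\gamma,\delta)\,(\gamma',\delta')=(\gamma+q^{\delta}\gamma',\ \delta+\delta')$, every element is determined by a pair $(\lambda q^{-\mu},\,\delta)$ with $\lambda\in\Z$, $\mu\in\Nz$, $\delta\in\Z$, which I would store by markings for $\lambda$, $\mu$ and $\delta$ in the circuit. The crucial point is that everything the HNN reduction needs unwinds into a bounded number of the power-circuit primitives from Section~\ref{sec:pc}: a group multiplication takes a few additions, a few multiplications by powers of $q$ (to form $q^{\delta}\gamma'$ and to put the two summands over a common denominator) and, if desired, one normalisation via the divisibility extraction of Corollary~\ref{cor:divisibility_in_red_pc}; inversion is immediate; the membership tests ``$g\in\langle a\rangle$?'' (amounting to $\delta=0$ and $q^{\mu}\mid\lambda$) and ``$g\in\langle t\rangle$?'' (amounting to $\lambda=0$), as well as the exponents themselves, are answered by Proposition~\ref{prop:compare_in_red_pc} and Corollary~\ref{cor:divisibility_in_red_pc}; and $\psi^{\pm1}$ merely exchanges the roles of the $a$-exponent and the $t$-exponent. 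After cloning the operands, each such primitive together with the ensuing call to \MTree costs $\O((\abs{\Gamma}+\omega)\cdot\omega)$ by Remark~\ref{rem:pc_strategy}, where $\omega$ is the weight.

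Next comes the algorithm proper. Write the input as $w=a^{k_0}b^{\sigma_1}a^{k_1}b^{\sigma_2}\cdots b^{\sigma_r}a^{k_r}$ with $\sigma_i\in\{+1,-1\}$ and $r+\sum_i\abs{k_i}\le n$. I would maintain a stack whose entries alternate between $\BS(1,q)$-elements (stored as above) and signs. Reading $w$ from left to right, push each converted syllable $a^{k_i}=(k_i,0)$ and each letter $b^{\sigma_i}$, but whenever the top of the stack displays a Britton pinch --- an element $g$ flanked by $b^{+1}$ and $b^{-1}$ with $g\in\langle a\rangle$, or by $b^{-1}$ and $b^{+1}$ with $g\in\langle t\rangle$ --- pop the two $b$'s together with $g$, multiply $\psi^{\pm1}(g)$ into the $\BS(1,q)$-element now on top, and carry on (this may expose a further pinch, handled in the same way). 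Each pinch removal deletes two occurrences of $b$, so at most $r/2$ of them occur; together with the $\O(n)$ syllable conversions and multiplications this makes $\O(n)$ power-circuit primitives in all. When $w$ is exhausted, Britton's lemma gives the verdict: if some $b$ survives on the stack then $w\neq_{\BG{q}}1$; otherwise the stack holds a single $\BS(1,q)$-element, and $w=_{\BG{q}}1$ iff that element is trivial --- a single comparison.

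What is left is the time bound, and this is where the real care is needed. A crude estimate --- $\O(n)$ primitives of weight $\omega$ on a circuit that may double at each step --- gives only $\O(n^4)$. To reach $\O(n^3)$ one shows, as in the $q=2$ treatment in \cite{muw11bg} and \cite{dlu13efficient}, that the circuit stays of size $\O(n)$ and the weight stays $\O(n)$ throughout: the $a$- and $t$-exponents produced by the repeated applications of $\psi^{\pm1}$, although of tower-function magnitude (cf.\ Figure~\ref{fig:pc_tower}), are built by $\O(n)$ arithmetic steps and hence retain linear-size power circuits, and the clones created in each operation collapse back onto already-present nodes of equal value when \MTree runs, so that no cumulative blow-up occurs. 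Granting this, each of the $\O(n)$ operations costs $\O((n+n)\cdot n)=\O(n^2)$, for a total of $\O(n^3)$. The main obstacle is precisely the claim that circuit size and weight remain linear under the merges carried out during Britton reduction; once that is in place, the theorem is just bookkeeping on top of the machinery of Section~\ref{sec:pc}.
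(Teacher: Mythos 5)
Your proposal follows essentially the same route as the paper, whose entire proof of Theorem~\ref{thm:wp_BG} is the single remark that the argument for $\BG{2}$ from \cite{dlu12efficient} and \cite{dlu13efficient} --- Britton reduction over the HNN extension of $\BS(1,q)$, with the $\BS(1,q)$-elements carried as (triple) markings in one reduced base-$q$ power circuit --- goes through verbatim once the base-$q$ circuits of Section~\ref{sec:pc} are available. Your sketch is in fact more detailed than the paper's; the one point where you are no more rigorous than the paper is the claim that circuit size and weight stay linear (the ``clones collapse back onto existing nodes'' justification is not what Proposition~\ref{prop:extend_tree} guarantees, which only bounds growth by $4\abs{U}$ per reduction), but like the paper you correctly identify this as the crux and defer it to the cited $q=2$ treatment.
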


The proof of Theorem \ref{thm:wp_BG} is literally the same as that for $\BG{2}$ which was given in \cite{dlu12efficient} and \cite{dlu13efficient}, except that the new base $q$ power circuits from Section \ref{sec:pc} are used. 

%================================================
\section{The Word Problem in Generalized Higman groups}\label{sec:higman}

We generalize the groups $H_f$ defined in the introduction by replacing the underlying Baumslag-Solitar group $\BS(1,2)$ by $\BS(1,q)$. 

\begin{definition}\label{def:higman_group}
The (generalized) Higman group $\Hig{q}{f}$ is defined as
\begin{equation}
	\Hig{q}{f}=\gr{a_1,\ldots,a_f}{a_{i+1}a_ia_{i+1}^{-1}=a_i^q\ (i\in\Z/f\Z)}.
\end{equation}
\end{definition}

While $H_f=\Hig{2}{f}$ ($f>4$) retains all the important properties of $H_4$ (infinite, huge compression, no non-trivial normal subgroup of finite index), this is not entirely true for $\Hig{q}{f}$ in general. For example, for all $f\ge 1$, the homomorphism given by
\begin{align*}
	\Hig{3}{f}&\twoheadrightarrow\Z/2\Z;\cr
	a_1&\mapsto 1,\cr
	a_2,\ldots,a_f&\mapsto 0
\end{align*}
sends $\Hig{3}{f}$ onto a finite non-trivial group. 

In this section, we will prove:

\begin{theorem}\label{thm:wp_Hig}
Let $q\ge 2,f\ge 4$. The word problem for the generalized Higman group $\Hig{q}{f}$ can be solved in $\O(n^6)$ time. 
\end{theorem}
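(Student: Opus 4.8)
The plan is to follow the route of \cite{dlu12efficient,dlu13efficient}, which solves the word problem of $\Hig{2}{4}$ within the bound $\O(n^6)$, replacing the base-$2$ power circuits used there by the base-$q$ power circuits of Section~\ref{sec:pc} and handling the extra generators $a_5,\dots,a_f$ by a rewriting system. The group-theoretic input I would use is that $\Hig{q}{f}$ is the fundamental group of a graph of groups whose underlying graph is a cycle of length $f$, with vertex groups $G_i=\gr{a_i,a_{i+1}}{a_{i+1}a_ia_{i+1}^{-1}=a_i^q}\cong\BS(1,q)$ and infinite cyclic edge groups $\langle a_i\rangle$; this is (the version insensitive to $q$ of) the argument of \cite{serre02trees}, Section~1.4. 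Grouping consecutive vertices, for $f=4$ this yields the amalgam $\Hig{q}{4}=L\ast_C R$ with $L=\langle a_1,a_2,a_3\rangle$, $R=\langle a_3,a_4,a_1\rangle$ and $C=\langle a_1,a_3\rangle$ free of rank $2$ (the normal form theorem for amalgams shows $C$ is free in both factors), and for general $f\ge4$ the analogous split $\Hig{q}{f}=P\ast_C Q$ in which $P$ and $Q$ are iterated amalgams -- ``chains'' -- of copies of $\BS(1,q)$ glued along cyclic subgroups. In every case Britton's lemma gives reduced normal forms, and every constituent step of a Britton reduction reduces to arithmetic and membership questions inside $\BS(1,q)$ -- rewriting an element as a power of a fixed $a_i$, multiplying such powers, dividing out powers of $q$ -- each of which can be performed in a single treed base-$q$ power circuit, in time polynomial in its size, by Corollary~\ref{cor:divisibility_in_red_pc}, Proposition~\ref{prop:compare_in_red_pc} and the arithmetic of Section~\ref{sec:pc_op}.

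\textbf{The algorithm.} Concretely, I would keep one treed base-$q$ power circuit $\Pi$ in which every integer exponent that occurs is stored as a compact marking -- this is essential, since a word of length $n$ can already denote $a_1^{\tow_q(n)}$ (cf. Figure~\ref{fig:pc_tower}), which $\Pi$ holds in linear space -- and represent the current group element by a word over an enriched alphabet whose letters are vertex-group elements tagged by markings of $\Pi$. A terminating, locally confluent rewriting system $R_f$ -- comprising a family of rules for each relator $a_{i+1}a_ia_{i+1}^{-1}=a_i^q$, the Britton cancellation rules of the amalgam and of the chains, and the $q$-ary carry rules analysed in Lemmas~\ref{lem:power_sum_system_confluent} and \ref{lem:power_sum_system_terminating} -- is run to normal form; two irreducible words then represent the same element if and only if they are literally equal after a final comparison of their markings by Proposition~\ref{prop:compare_in_red_pc}. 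Correctness, that is, that $R_f$ decides the word problem, follows from local confluence of the Britton moves together with confluence of the carry system exactly as in the case $q=2$, $f=4$; the one genuinely new point is that adjoining $a_5,\dots,a_f$ preserves confluence, which should follow from the chain structure, since the new generators interact with the rest only through edge subgroups $\langle a_i\rangle$ that are already present.

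\textbf{Complexity, and the main obstacle.} For the running time I would argue as in Remark~\ref{rem:pc_strategy}: each elementary step of $R_f$ triggers a bounded number of power-circuit operations (a clone, an addition, or a multiplication by $q^{\e(M)}$, followed by a call to \MTree or \ETree); the weight $\omega$ -- the total size of the supports of the markings in play -- stays $\O(n)$ throughout; and the total number $s$ of rewriting steps is $\O(n^2)$, the parameter $f$ entering only as a constant factor. Starting from a circuit of size $\O(n)$ and performing $s$ operations of weight $\omega$, the circuit size stays $\O(s\omega)=\O(n^2)$ and the running time is $\O(s^2\omega^2)=\O(n^6)$, using Theorems~\ref{thm:reduce} and \ref{thm:make_tree}. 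The step I expect to be the main obstacle is precisely the bound $s=\O(n^2)$: unlike cancellation in a free group, a single new letter of the input can set off a cascade of Britton cancellations propagating through the nested copies of $\BS(1,q)$ and around the cyclic amalgam, and one must show the cascade terminates after polynomially many steps, with the bound independent of $q$ and only polynomial in $f$ -- this is where the analysis of \cite{dlu13efficient} has to be genuinely redone rather than merely quoted.
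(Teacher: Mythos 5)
Your overall architecture is the paper's: the same decomposition of $\Hig{q}{f}$ into an amalgam of two ``chains'' of copies of $\BS(1,q)$ over the rank-two free group $F_{1,f-1}=\langle a_1,a_{f-1}\rangle$, Britton-style reduction, one treed base-$q$ power circuit holding all exponents, and the cost accounting of Remark~\ref{rem:pc_strategy}. However, the two steps you flag as obstacles are exactly where the paper does its real work, and your proposal does not supply them. First, the constituent steps of the top-level Britton reduction do \emph{not} reduce to ``arithmetic and membership questions inside $\BS(1,q)$'': the amalgamated subgroup is free of rank two and sits inside the iterated amalgam $G_{1,\ldots,f-1}$ of $f-2$ copies of $\BS(1,q)$. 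One must decide, for an arbitrary sequence of pairs $(u,k)_i$, whether it represents an element of $F_{1,f-1}$, and if so rewrite it \emph{constructively} as an alternating product of pairs $(u,0)_1$ and $(0,\ell)_{f-2}$, so that the generators can be swapped into the other factor. The paper does this with the extended rewriting system $\mathcal{L}'$ (rules $(6)$ and $(7)$) together with the proposition that an $\mathcal{L}'$-reduced word representing a subgroup element is already in that alternating form; this effective membership result is the genuinely new content for general $f$, and ``should follow from the chain structure'' is an assertion, not an argument.

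Second, the bound of $\O(n^2)$ on the number of power-circuit operations is not obtained by analysing cancellation cascades at all. The paper organises the computation as a stack-based scan (Algorithm~\ref{alg:wp_Hig}) with a pointer $t$: in each loop iteration either $t$ increases, or some $w_i$ is merged with $w_{i+1}$ and $s$ decreases, so there are only $\O(n)$ iterations; each iteration performs $\O(n)$ arithmetic operations because every rule of $\mathcal{L}'$ is length-non-increasing and a single left-to-right pass produces an $\mathcal{L}'$-reduced word. Correctness rests on the Britton-type statements (Propositions~\ref{prop:britton_G_1_f} and~\ref{prop:britton_H}), not on global confluence of a combined rewriting system including carry rules, which you would additionally have to establish. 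You correctly identify the operation count as the missing piece but leave it unresolved, so as written the proposal does not yield the $\O(n^6)$ bound. (A minor slip: with $s=\O(n^2)$ and $\omega=\O(n)$ the circuit size is $\O(s\omega)=\O(n^3)$, not $\O(n^2)$; the final bound $\O(s^2\omega^2)=\O(n^6)$ is unaffected.)
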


The key observation for the solution of the word problem is the decomposition of $\Hig{q}{f}$ into a series of amalgamations of $f$ copies of the Baumslag-Solitar group $\BS(1,q)=\gr{a,t}{tat^{-1}=t^q}$, see \cite{serre02trees}. 

\begin{equation*}
	\Hig{q}{f}=G_{1,\ldots,f-1}\ast_{F_{1,f-1}}G_{f-1,f,1},
\end{equation*}
where
\begin{align*}
	G_{1,\ldots,f-1}&=\gr{a_1,\ldots,a_{f-1}}{a_{i+1}a_ia_{i+1}^{-1}=a_i^q (1\le i<f-1)}\ \text{and}\cr
	G_{f-1,f,1}&=\gr{a_{f-1},a_f,a_1}{a_fa_{f-1}a_f^{-1}=a_{f-1}^q,\,a_1a_fa_1^{-1}=a_f^q}
\end{align*}
and in both cases $F_{1,f-1}$ is the subgroup generated by $a_1$ and $a_{f-1}$, which in fact freely generate $F_{1,f-1}$ (if $f\ge 4$). Furthermore, we can break $G_{1,\ldots,f-1}$ and $G_{f-1,f,1}$ down to
\begin{align*}
	G_{1,\ldots,f-1}&=G_{1,2}\ast_{F_2}G_{2,3}\ast_{F_3}\ldots\ast_{F_{f-2}}G_{f-2,f-1}\text{ and}\cr
	G_{f-1,f,1}&=G_{f-1,f}\ast_{F_f}G_{f,1},
\end{align*}
where $G_{i,i+1}=\gr{a_i,a_{i+1}}{a_{i+1}a_ia_{i+1}^{-1}=a_i^2}$ and the indices are read in $\Z/f\Z$. 

Each group $G_{i,i+1}$ is a copy of the Baumslag-Solitar group $\BS(1,q)$ and thus isomorphic to the semidirect product $\Z[1/q]\rtimes\Z$ which consists of pairs $(u,k)\in\Z[1/q]\rtimes\Z$. The isomorphism is given by $a_i\mapsto(1,0)$ and $a_{i+1}\mapsto(0,1)$. In $\Z[1/q]\rtimes\Z$, we have the following formulae for multiplication and inversion:
\begin{align*}
	(u,k)(v,\ell)&=(u+v\cdot q^k,k+\ell)\cr
	(u,k)^{-1}&=(-u\cdot q^{-k},-k)
\end{align*}

When dealing with more than one group $G_{i,i+1}$, we add $i$ as a subscript to those pairs designating an element of $G_{i,i+1}$. 

In order to solve the word problem for $\Hig{q}{f}$, we first need a solution for the subgroup membership problem of $F_{1,e}$ in $G_{1,\ldots,e}$ (with $e\ge 3$; this covers both $G_{1,\ldots,f-1}$ and $G_{f-1,f,1}$). Furthermore, we have to do this in an effective way, i.e., given a sequence of pairs $(u,k)_i$ which represents an element of $F_{1,e}$, we have to find a corresponding sequence of pairs of the form $(u,0)_1$ and $(0,\ell)_{e-1}$. 

We start by giving a reduction system $\mathcal{L}$ for $G_{1,\ldots,e}$:
\begin{align*}
(1)\quad&(u,k)_i(v,\ell)_i\longrightarrow(u+v\cdot q^k,k+\ell)_i&\text{for $1\le i\le e$}\cr
(2)\quad&(u,k)_i(v,0)_{i+1}\longrightarrow(u,k+v)_i&\text{for $1\le i<e$ and $v\in\Z$}\cr
(3)\quad&(u,0)_{i+1}(v,\ell)_i\longrightarrow(v\cdot q^u,\ell+u)_i&\text{for $1\le i<e$ and $u\in\Z$}\cr
(4)\quad&(u,k)_{i+1}(0,\ell)_i\longrightarrow(u+\ell\cdot q^k,k)_{i+1}&\text{for $1\le i<e$}\cr
(5)\quad&(0,k)_i(v,\ell)_{i+1}\longrightarrow(k+v,\ell)_{i+1}&\text{for $1\le i<e$}
\end{align*}

The system $\mathcal{L}$ is not confluent in general, but the following property holds:
\begin{proposition}\label{prop:britton_G_1_f}
If $w$ is an $\mathcal{L}$-reduced word that equals $1$ in $G_{1,\ldots,e}$, then $w$ is the empty word. \qed
\end{proposition}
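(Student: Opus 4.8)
The statement is Britton's Lemma for the iterated amalgamated product $G_{1,\ldots,e}=G_{1,2}\ast_{F_2}G_{2,3}\ast_{F_3}\cdots\ast_{F_{e-1}}G_{e-1,e}$, and the plan is to deduce it from the normal form theorem for a single amalgamated free product $A\ast_C B$ (see \cite{lyndon01combinatorial}, Ch.~IV) by induction on $e$. For $e=2$ the group $G_{1,2}$ is isomorphic to $\Z[1/q]\rtimes\Z$ via $a_1\mapsto(1,0)$, $a_2\mapsto(0,1)$; by rule~$(1)$ an $\mathcal{L}$-reduced word is a single syllable $(u,k)_1$ (after discarding trivial syllables), and since the isomorphism is injective this represents $1$ only if $(u,k)=(0,0)$, i.e.\ the word is empty. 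For the inductive step I would first verify, routinely from the multiplication and inversion formulae together with the amalgamation identities $a_{i+1}=(0,1)_i=(1,0)_{i+1}$, that every rule of $\mathcal{L}$ preserves the represented element of $G_{1,\ldots,e}$; this licenses all the rewriting used below.

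Write $G_{1,\ldots,e}=A\ast_C B$ with $A=G_{1,\ldots,e-1}$, $B=G_{e-1,e}$ and $C=F_{e-1}=\langle a_{e-1}\rangle$, where inside $B$ one has $C=\{(m,0)_{e-1}:m\in\Z\}$ while inside $A$ the subgroup $C$ lies in the last factor $G_{e-2,e-1}$ as $\{(0,m)_{e-2}:m\in\Z\}$. Given a nonempty $\mathcal{L}$-reduced word $w$, parse it into maximal blocks: maximal runs of syllables of subscript $\le e-2$ alternating with maximal runs of syllables of subscript $e-1$. By rule~$(1)$ a run of the second kind is a single syllable of $B$; a run of the first kind is a sub-word of $w$, hence $\mathcal{L}$-reduced for the system $\mathcal{L}$ of $G_{1,\ldots,e-1}$ (whose rules are exactly those of the system for $G_{1,\ldots,e}$ restricted to subscripts $\le e-2$), so by the induction hypothesis it represents a nontrivial element of $A$. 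The role of rules $(2)$–$(5)$ is precisely to perform Britton reduction across the amalgamated subgroup $C$: a syllable of subscript $e-1$ lying in $C$ cannot stand next to a syllable of subscript $e-2$, and — using the induction hypothesis to control which $\mathcal{L}$-reduced elements of $A$ lie in $C$ — a left block lying in $C$ cannot stand next to a right block either. Absorbing every right block that lies in $C$ into its neighbouring left blocks (rewriting $(m,0)_{e-1}$ as $(0,m)_{e-2}$) turns $w$ into an alternating word $P_0Q_1P_1\cdots Q_sP_s$ for $A\ast_C B$ with every $Q_j\in B\setminus C$ and every interior $P_j\in A\setminus C$. If $s\ge 1$ the normal form theorem for $A\ast_C B$ gives $w\neq 1$; if $s=0$ then $w$ lies in $A$, so $w=1$ in $G_{1,\ldots,e}$ forces $w=1$ in $A$, and after rewriting the (necessarily $C$-valued) subscript-$(e-1)$ syllables of $w$ down one level and re-reducing, the induction hypothesis applied to $G_{1,\ldots,e-1}$ finishes the case.

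The part I expect to be the main obstacle is precisely the reconciliation in the previous paragraph: an $\mathcal{L}$-reduced word need not parse as a \emph{reduced} alternating word for $A\ast_C B$, because a subscript-$(e-1)$ syllable lying in $C$ may be flanked by syllables of factors $G_{i,i+1}$ with $i<e-2$, which no rule of $\mathcal{L}$ touches, and because rules $(2)$–$(5)$ only fire on special coefficient patterns. Making the ``absorb $C$-blocks'' step and the case $s=0$ rigorous requires tracking the rewriting $(m,0)_{e-1}\mapsto(0,m)_{e-2}$ followed by re-reduction and observing that it never introduces a new subscript-$(e-1)$ syllable (no rule of $\mathcal{L}$ has such a syllable on its right-hand side without one on its left), which gives a terminating secondary induction on the number of subscript-$(e-1)$ syllables — provided the induction hypothesis is stated as ``nonempty and $\mathcal{L}$-reduced $\Rightarrow$ not equal to $1$'' rather than as ``equal to $1\Rightarrow$ empty''. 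In practice the cleanest route is probably to prove by induction a strengthened statement that also describes which $\mathcal{L}$-reduced words lie in $C$ (and more generally in $F_{1,e}$) — a description that is needed in any case for the membership problem of $F_{1,e}$ treated next — so that the recursion closes without this bookkeeping.
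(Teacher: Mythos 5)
The paper in fact gives no proof of this proposition: it is stated with a \qed and a remark that it is an instance of the normal-form theorem for iterated amalgamated products, provable via Bass--Serre theory. Your plan --- split off the last factor, write $G_{1,\ldots,e}=A\ast_C B$ with $A=G_{1,\ldots,e-1}$, $B=G_{e-1,e}$, $C=\langle a_{e-1}\rangle$, and reduce to the two-factor normal form theorem by induction on $e$ --- is the natural elementary substitute, and its skeleton is sound. Your termination observation is also correct: every rule of $\mathcal{L}$ has at most as many subscript-$j$ syllables on its right-hand side as on its left, for every $j$, so the secondary induction on the number of subscript-$(e-1)$ syllables does terminate. (One cosmetic point: since $\mathcal{L}$ has no rule deleting a trivial syllable, a lone $(0,0)_1$ is $\mathcal{L}$-reduced and equals $1$; the paper implicitly assumes trivial pairs are discarded, as you do in your base case, and this convention should be made explicit.)

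The gap you flag at the end is, however, a genuine one and is essentially the whole content of the proposition, so the argument does not close as written. Two steps need more than the stated induction hypothesis. First, to apply the normal form theorem you must know that every \emph{interior} left block (a maximal $\mathcal{L}$-reduced run of syllables with subscripts $\le e-2$) lies outside $C$. Local $\mathcal{L}$-reducedness only excludes the single-syllable case $(0,m)_{e-2}$, via rules $(4)$ and $(5)$; a longer $\mathcal{L}$-reduced block could a priori still evaluate to some $a_{e-1}^m$, and ``$\mathcal{L}$-reduced and nonempty $\Rightarrow$ not equal to $1$'' says nothing about this. Second, in the $s=0$ case and in the absorption of $C$-valued right syllables $(m,0)_{e-1}$ flanked by subscripts $<e-2$, you rewrite down one level and re-reduce, and you again need to control where the result lands. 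Both issues are resolved exactly as you suggest: prove by simultaneous induction the strengthened statement that a nonempty $\mathcal{L}$-reduced word over subscripts $\le e-2$ representing an element of $\langle a_{e-1}\rangle$ must be the single syllable $(0,m)_{e-2}$ (and, symmetrically, the analogous description for $\langle a_1\rangle$ and for $F_{1,e}$, which the paper needs anyway for the membership proposition that follows). Until that strengthened hypothesis is formulated and carried through the induction, the proof is an outline rather than a proof; with it, your route works and yields a self-contained combinatorial argument where the paper offers only a citation.
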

This ressembles Britton's Lemma for HNN extensions. In fact, Bass-Serre theory provides a unifying notion (and proof) for both phenomena. We give no further proof here, but instead apply the system $\mathcal{L}$ to the subgroup membership problem. 

Let $\mathcal{L}^\prime$ be the system $\mathcal{L}$ extended by the rules
\begin{align*}
(6)\quad&(x_1,-x_2)_1(x_2,-x_3)_2\ldots(x_{e-2},-x_{e-1})_{e-2}(\widetilde x_{e-1},x_e)_{e-1}\cr
&\qquad\longrightarrow(x_1,0)_1(\widetilde x_{e-1}-x_{e-1},x_e)_{e-1}\qquad\text{and}\cr
(7)\quad&(-x_{e-1}\cdot q^{x_e},x_e)_{e-1}(-x_{e-2}\cdot q^{x_{e-1}},x_{e-1})_{e-2}\ldots(-x_2\cdot q^{x_3},x_3)_2(x_1\cdot q^{x_2},\widetilde x_2)_1\cr
&\qquad\longrightarrow(0,x_e)_{e-1}(x_1,\widetilde x_2-x_2)_1,
\end{align*}
where all $x_i\neq 0$. 

The new rules respect the group structure, and hence Proposition \ref{prop:britton_G_1_f} holds for $\mathcal{L}^\prime$ as well. The new rules are not length-increasing, since $e\ge 3$. 

Starting with an arbitrary sequence $w$ of pairs $(u,k)_i$ representing an element in $G_{1,\ldots,e}$, one can compute an equivalent $\mathcal{L}^\prime$-reduced word $\hat w$ with linearly many operations: First, compute an $\mathcal{L}$-reduced word $\tilde w$, then apply rules $(6)$ and $(7)$. Note that the latter leave $\tilde w$ $\mathcal{L}$-reduced. Only the second pair generated by either of these rules can be part of another application of $(6)$ or $(7)$. Therefore, $\tilde w$ can be $\mathcal{L}^\prime$-reduced with one pass from left to right. 

\begin{proposition}
Let $w$ be a sequence of pairs $(u,k)_i$ which represents an element of the subgroup $F_{1,e}\le G_{1,\ldots,e}$. If $w$ is $\mathcal{L}^\prime$-reduced, then $w$ is already an alternating sequence of pairs of types $(u,0)_1$ and $(0,\ell)_{e-1}$. 
\end{proposition}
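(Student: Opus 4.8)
The plan is to show that an $\mathcal{L}^\prime$-reduced word $w$ representing an element of $F_{1,e}$ must coincide, as a sequence of pairs, with the freely reduced normal form of that element. Recall $a_1=(1,0)_1$ and $a_e=(0,1)_{e-1}$, so that $a_1^u=(u,0)_1$ and $a_e^\ell=(0,\ell)_{e-1}$. Since $e\ge 3$ (equivalently $f\ge 4$), the subgroup $F_{1,e}=\langle a_1,a_e\rangle$ is free of rank two, so the element represented by $w$ has a unique freely reduced expression; I write it as an alternating sequence $\hat v=p_1\cdots p_m$ of pairs of types $(u,0)_1$ and $(0,\ell)_{e-1}$, all exponents nonzero, with $\hat v=w$ in $G_{1,\ldots,e}$. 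If $w=\hat v$ as sequences of pairs, then we are done, because $\hat v$ has exactly the claimed form.

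The first step is to check that $\hat v$ is $\mathcal{L}^\prime$-reduced. Rule $(1)$ is inapplicable because consecutive pairs of $\hat v$ carry the distinct indices $1$ and $e-1$. Rules $(2)$--$(5)$ require two consecutive pairs with indices $i$ and $i+1$: for $e\ge 4$ the indices $1$ and $e-1$ differ by at least two, so such a configuration never occurs, and for $e=3$ one checks directly that none of the five left-hand sides matches a pair $(u,0)_1$ adjacent to a pair $(0,\ell)_{e-1}$ with $u,\ell\neq 0$. Rules $(6)$ and $(7)$ need a block whose indices run consecutively through all of $1,\ldots,e-1$; for $e\ge 4$ no such block occurs in $\hat v$, and for $e=3$ the two left-hand sides again fail to match the pure pairs. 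The same reasoning applies to $\hat v^{-1}$, obtained by reversing $\hat v$ and inverting each pair, which is again an alternating sequence of pairs of the two pure types; hence $\hat v^{-1}$ is $\mathcal{L}^\prime$-reduced as well.

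It then remains to prove $w=\hat v$ as words, which I would do by induction on the length $n$ of $w$. For $n=0$ there is nothing to show. For $n\ge 1$ the word $\hat v$ is also nonempty, since otherwise $w=1$ in $G_{1,\ldots,e}$ and Proposition~\ref{prop:britton_G_1_f} would force $w$ to be empty. Consider the concatenation $w\hat v^{-1}$; it equals $1$ in $G_{1,\ldots,e}$. Because $w$ and $\hat v^{-1}$ are each $\mathcal{L}^\prime$-reduced, every reducible block of $w\hat v^{-1}$ must straddle the junction between the two halves, and $w\hat v^{-1}$ cannot itself be $\mathcal{L}^\prime$-reduced, for then Proposition~\ref{prop:britton_G_1_f} would make it empty. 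A case distinction on the type of the last pair $p_m^{-1}$ of $\hat v^{-1}$ (which is $(u',0)_1$ or $(0,\ell')_{e-1}$) and on which rule fires at the junction shows that the last pair $w_n$ of $w$ must be of the same pure type as $p_m$: for instance, if $p_m^{-1}$ has type $1$ but $w_n$ is not of the form $(u_n,0)_1$, then after the junction has been resolved one is left with a nonempty $\mathcal{L}^\prime$-reduced word equal to $1$, contradicting Proposition~\ref{prop:britton_G_1_f}. Once $w_n$ is known to be a pure pair it lies in $F_{1,e}$, so the prefix $w_1\cdots w_{n-1}$ is $\mathcal{L}^\prime$-reduced (a prefix of an $\mathcal{L}^\prime$-reduced word is $\mathcal{L}^\prime$-reduced) and represents $w\cdot w_n^{-1}\in F_{1,e}$; the induction hypothesis applies to it, and since $w_{n-1}$ and $w_n$ have different indices (rule $(1)$), $w$ is an alternating sequence of pairs of types $(u,0)_1$ and $(0,\ell)_{e-1}$, completing the induction.

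The main obstacle is the junction analysis in the last step. To make it rigorous one has to carry out the $\mathcal{L}^\prime$-reductions of $w\hat v^{-1}$ in an order in which reducible blocks only ever appear next to the junction, so that the reduction genuinely ``zips'' $w$ against $\hat v^{-1}$ one pair at a time, and one has to keep track of the additional sub-cases produced by rules $(6)$ and $(7)$, whose left-hand sides span $e-1$ pairs and which can be triggered when cancellation pushes a long index-block together near the junction. This is the same kind of bookkeeping that underlies Proposition~\ref{prop:britton_G_1_f} itself; everything else in the argument is routine.
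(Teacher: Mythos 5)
Your overall strategy is the same as the paper's: write the element of $F_{1,e}$ in its free normal form $\hat v$ as an alternating sequence of pure pairs, form the product of $w$ with the inverse of $\hat v$ (you put the junction on the right, the paper on the left --- immaterial), observe that this product is trivial and hence must $\mathcal{L}$-reduce to the empty word by Proposition~\ref{prop:britton_G_1_f}, and analyze the forced cancellations at the junction. The preparatory parts of your argument (the normal form exists and is $\mathcal{L}^\prime$-reduced, factors of $\mathcal{L}^\prime$-reduced words are $\mathcal{L}^\prime$-reduced, the induction scaffolding) are fine.

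However, the step you yourself flag as ``the main obstacle'' is not a routine bookkeeping exercise to be deferred --- it is the entire content of the proposition, and your sketch of it points in the wrong direction. You describe the difficulty as having to ``keep track of the additional sub-cases produced by rules $(6)$ and $(7)$'' when they fire inside the concatenation $w\hat v^{-1}$. That is not where these rules enter. The reduction of $w\hat v^{-1}$ to the empty word uses only $\mathcal{L}$; the point of rules $(6)$ and $(7)$ is that when the cancellation at the junction does \emph{not} immediately match a whole pure pair of $\hat v^{-1}$ against a pure pair of $w$, the chain of forced $\mathcal{L}$-reductions consumes $e-1$ consecutive pairs of $w$ whose indices and entries are pinned down one by one ($i_j=j$, $u_1=v_1$, $u_2=-k_1$, \dots), and this pinned-down block is exactly a left-hand side of rule $(6)$ (or $(7)$) sitting inside $w$ itself --- contradicting the hypothesis that $w$ is $\mathcal{L}^\prime$-reduced. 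This is the only place the hypothesis ``$\mathcal{L}^\prime$-reduced'' rather than merely ``$\mathcal{L}$-reduced'' is used, and the statement is false without it: the left-hand side of rule $(6)$ with $\widetilde x_{e-1}=x_{e-1}$ is an $\mathcal{L}$-reduced word representing an element of $F_{1,e}$ that is not an alternating sequence of pure pairs. Relatedly, your claim that the reduction ``zips'' $w$ against $\hat v^{-1}$ one pair at a time and that a single case distinction at the junction shows $w_n$ is pure is not accurate: resolving one pure pair of $\hat v^{-1}$ can drag in up to $e-1$ pairs of $w$, and it is precisely this long drag that must be converted into the contradiction above. Until that analysis is carried out, the proof is incomplete at its central step.
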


\begin{proof}
Let $w\streq(u_1,k_1)_{i_1}(u_2,k_2)_{i_2}\ldots(u_n,k_n)_{i_n}$. We assume that
\begin{equation*}
	w\grpeq\tilde w\streq(v_1,0)_1(0,\ell_1)_{e-1}(v_2,0)_1(0,\ell_2)_{e-1}\ldots\in F_{1,e}
\end{equation*}
and that $\tilde w$ contains no trivial pairs $(0,0)_i$ which makes $\tilde w$ $\mathcal{L}$-reduced. The case where $\tilde w$ starts with $(0,\ell_1)_{e-1}(v_1,0)_1\ldots$ is similar. The sequence
\begin{equation*}
	\tilde w^{-1}w\streq\ldots(-v_2,0)_1(0,-\ell_1)_{e-1}(-v_1,0)_1(u_1,k_1)_{i_1}(u_2,k_2)_{i_2}(u_3,k_3)_{i_3}\ldots
\end{equation*}
equals $1$ in $G_{1,\ldots,e}$ and must therefore $\mathcal{L}$-reduce to the empty sequence. Note that both $\tilde w^{-1}$ and $w$ are $\mathcal{L}$-reduced, so any $\mathcal{L}$-reduction can only occur at the border between the two words. 

Clearly, we cannot have $i_1\ge 3$ or else $\tilde w^{-1}w$ would be $\mathcal{L}$-reduced. If $i_1=2$, then a reduction of type $(2)$ is possible if $k_1=0$ and $u_1\in\Z$, in which case we get $(-v_1,0)_1(u_1,0)_2\overset{(2)}{\longrightarrow}(-v_1,u_1)_1$. But after that, the sequence is $\mathcal{L}$-reduced since $k_1=0$ and $u_1\in\Z$ imply $i_2\neq 1$. 

Hence, we are left with $i_1=1$. In that case, we get $(-v_1,0)_1(u_1,k_1)_1\overset{(1)}{\longrightarrow}(-v_1+u_1,k_1)_1$. If this is $(0,0)_1$, we have $(u_1,k_1)_{i_1}=(v_1,0)_1$ and we proceed inductively with the remaining sequence. Otherwise, we must have $u_1=v_1$ in order to continue applying rules. If $e\ge 4$, the next rule can only apply to $(0,k_1)_1(u_2,k_2)_{i_2}$, so $i_2=2$ and we get $(0,k_1)_1(u_2,k_2)_2\overset{(5)}{\longrightarrow}(k_1+u_2,k_2)_2$. Again, the sequence is $\mathcal{L}$-reduced unless $u_2=-k_1$. We iterate this argument until we arrive at
\begin{equation*}
	\tilde w^{-1}w\overset{\ast}{\underset{\mathcal{L}}{\Longrightarrow}}
	\ldots(-v_2,0)_1(0,-\ell_1)_{e-1}(0,k_{e-2})_{e-2}(u_{e-1},k_{e-1})_{i_{e-1}}\ldots.
\end{equation*}
On the way, we have found $u_1=v_1,u_2=-k_1,u_3=-k_2,\ldots,u_{e-2}=-k_{e-3}$, and $i_j=j$ for $1\le j\le e-2$. One further reduction of type $(4)$ brings us to
\begin{equation*}
	\tilde w^{-1}w\overset{\ast}{\underset{\mathcal{L}}{\Longrightarrow}}
	\ldots(-v_2,0)_1(k_{e-2}\cdot q^{-\ell_1},-\ell_1)_{e-1}(u_{e-1},k_{e-1})_{i_{e-1}}\ldots.
\end{equation*}
Since $\ell_1\neq 0$, the next reduction requires $i_{e-1}=e-1$. Thus, rule (6) of $\mathcal{L}^\prime$ can be applied to the prefix $(u_1,k_1)_1\ldots(u_{e-1},k_{e-1})_{e-1}$ of the original word $w$. 
\end{proof}

For the amalgamated product $\Hig{q}{f}=G_{1,\ldots,f-1}\ast_{F_{1,f-1}}G_{f-1,f,1}$, a property similar to \ref{prop:britton_G_1_f} holds:
\begin{proposition}\label{prop:britton_H}
Let $w\streq w_1w_2\ldots w_s$ be a non-empty sequence with $w_i\in\{(u,k)_i\>:\>1\le i<f-1\}^\ast$ or $w_i\in\{(u,k)_i\>:\>f-1\le i\le f\}^\ast$, alternatingly. If $w$ equals $1$ in $\Hig{q}{f}$, then $w_i\in F_{1,f-1}$ for some index $i$. \qed
\end{proposition}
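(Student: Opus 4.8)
The statement is the amalgamated-product analogue of Britton's Lemma for the decomposition $\Hig{q}{f}=G_{1,\ldots,f-1}\ast_{F_{1,f-1}}G_{f-1,f,1}$ recalled above, and the plan is to reduce it to the normal form theorem for amalgamated free products (part of Bass--Serre theory), exactly as Proposition~\ref{prop:britton_G_1_f} is obtained.

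First I would translate the hypothesis into the language of the amalgam. Each block $w_i$ is a word over the generators of precisely one of the two factors: if its pairs carry indices in $\{1,\ldots,f-2\}$ it represents an element $g_i\in G_{1,\ldots,f-1}$, and if they carry indices in $\{f-1,f\}$ it represents an element $g_i\in G_{f-1,f,1}$. The assumption that the blocks alternate in type then says exactly that $g_1,g_2,\ldots,g_s$ lie alternately in the two factor groups, while ``$w=1$ in $\Hig{q}{f}$'' means $g_1g_2\cdots g_s=1$. Next I would invoke that the inclusions of $F_{1,f-1}=\langle a_1,a_{f-1}\rangle$ into $G_{1,\ldots,f-1}$ and into $G_{f-1,f,1}$ are injective and compatible --- indeed $a_1$ and $a_{f-1}$ freely generate a free subgroup of rank two in each factor whenever $f\ge 4$, as recorded above --- so that the displayed decomposition really is an amalgamated product over $F_{1,f-1}$. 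Granting this, the normal form theorem for $A\ast_C B$ gives: a product $g_1\cdots g_s$ of elements lying alternately in $A$ and $B$ can equal $1$ only if either $s=1$ and $g_1=1$, or some $g_i\in C$. In both cases some $g_i$ lies in $F_{1,f-1}$, i.e.\ $w_i\in F_{1,f-1}$, which is the claim.

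I expect the only real work to be bookkeeping rather than conceptual: lining up the hypothesis's ``alternating blocks of indexed pairs'' with the two factors, and --- the one genuinely load-bearing point --- checking that the two copies of $F_{1,f-1}$, namely $\langle a_1,a_{f-1}\rangle$ inside $G_{1,\ldots,f-1}$ and inside $G_{f-1,f,1}$, are identified correctly by the amalgamation and that both inclusions are injective. This is precisely where $f\ge 4$ enters, and it is why the analogous statement has no chance for smaller $f$ (where $\Hig{q}{f}$ is trivial or nearly so).

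For readers preferring an argument in the style of the $\mathcal{L}$- and $\mathcal{L}^\prime$-reductions above, one may instead set up a two-level rewriting procedure: $\mathcal{L}^\prime$-reduce each block inside its own factor, and whenever a block comes to represent an element of $F_{1,f-1}$ rewrite it --- using the preceding proposition on the subgroup membership problem --- into the canonical alternating form of pairs $(u,0)_1$, $(0,\ell)_{f-1}$ and absorb it into the neighbouring block(s); one then argues as in Proposition~\ref{prop:britton_G_1_f} that a non-empty fully reduced sequence never evaluates to $1$. The crux of this route is again the amalgamated normal-form argument, so appealing to Bass--Serre theory directly is the more economical option.
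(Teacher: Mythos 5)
Your proposal is correct and is essentially the argument the paper intends: the paper states Proposition~\ref{prop:britton_H} without proof, relying (as it does for Proposition~\ref{prop:britton_G_1_f}) on the normal form theorem for the amalgamated product $\Hig{q}{f}=G_{1,\ldots,f-1}\ast_{F_{1,f-1}}G_{f-1,f,1}$ from Bass--Serre theory, together with the recorded fact that $a_1$ and $a_{f-1}$ freely generate $F_{1,f-1}$ in both factors when $f\ge 4$. Your identification of the injectivity of the two embeddings of $F_{1,f-1}$ as the one load-bearing point is exactly right.
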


\begin{algorithm}\label{alg:wp_Hig}
\Input{a word $w$ over $a_i,a_i^{-1}$ ($1\le i\le f$)}
\Output{the answer to $w\overset{?}{=}1$ in $\Hig{q}{f}$}
\BlankLine
Rewrite the input $w$ by replacing each $a_i^{\pm 1}$ by $(\pm 1,0)_i$.\;\label{alg:wp_Hig:init}
Break $w$ into subsequences $w=w_1w_2\ldots w_s$ such that in each $w_j$ the subscripts of all pairs are either in $\{1,2,\ldots,f-2\}$ or in $\{f-1,f\}$.\;
Let $t:=0$.\;
\While{$(t=0\wedge s>1)\vee(0<t<s)$}
{
	\eIf{$t=0\wedge s>1$}
	{
		$\mathcal{L}^\prime$-reduce $w_1$. If $w_1$ becomes emtpy, remove it (thereby decreasing $s$) and continue with the next iteration.\;
		\eIf{$w_1\in F_{1,f-1}$}
		{
			Merge $w_1$ and $w_2$. Before doing so, if $w_1$ and $w_2$ are from different groups (one from $G_{1,\ldots,f-1}$ and the other one from $G_{f-1,f,1}$), swap all the pairs in $w_1$ using the following rules: $(x,0)_1\leftrightarrow(0,x)_f$ and $(0,x)_{f-2}\leftrightarrow(x,0)_{f-1}$\;
		}
		{Increment $t$ by one.\;}
	}({\quad($0<t<s$)})
	{
		\eIf{$w_t$ and $w_{t+1}$ are both from $G_{1,\ldots,f-1}$ or both from $G_{f-1,f,1}$}
		{
			Merge $w_t$ and $w_{t+1}$.\;
			Decrement $t$ by one.\;
		}
		{
			$\mathcal{L}^\prime$-reduce $w_{t+1}$. If $w_{t+1}$ becomes emtpy, remove it and continue with the next iteration.\;
			\eIf{$w_{t+1}\in F_{1,f-1}$}
			{
				Perform the replacements $(x,0)_1\leftrightarrow(0,x)_f$ and $(0,x)_{f-2}\leftrightarrow(x,0)_{f-1}$ in $w_{t+1}$, then merge it with $w_t$.\;
				Decrement $t$ by one.\;
			}
			{
				Increment $t$ by one.\;
			}
		}
	}
}
\Return whether $s=0$.\;
\caption{Procedure for solving the word problem in $\Hig{q}{f}$}
\end{algorithm}

From this proposition, we can derive Algorithm \ref{alg:wp_Hig} which solves the word problem in $\Hig{q}{f}$. In this algorithm, the word $w=w_1w_2\ldots w_s$ is split into $w_1\ldots w_t$ and $w_{t+1}\ldots w_s$. The first part is an $\mathcal{L}^\prime$-reduced alternating sequence of group elements from $G_{1,\ldots,f-1}$ or $G_{f-1,f,1}$ with no $w_i$ ($1\le i\le t$) being in the subgroup $F_{1,f-1}$. In each loop cycle either $t$ increases, or $t$ decreases by one and at the same time, some $w_i$ is merged with $w_{i+1}$, which means that $s$ decreases. Thus, the loop is executed only linearly often. 

In order to get a time bound for Algorithm \ref{alg:wp_Hig}, it remains to show how to perform the tests arithmetic operations on the pairs $(u,k)_i$ efficiently. 

Since power circuits are designed to work with integers, we have to avoid fractions for the first components of pairs $(u,k)_i\in G_{i,i+1}\simeq\Z[1/q]\rtimes\Z$. Therefore, we use the triple notation introduced in \cite{dlu12efficient}. For $u,x,k\in\Z$ with $x\le 0\le k$, let
\begin{equation*}
	[u,x,k]_i:=(u\cdot q^x,x+k)_i\in G_{i,i+1}.
\end{equation*}

If $U$, $X$, and $K$ are markings in a base $q$ power circuit, we call $T_i=[U,X,K]_i$ a triple marking\index{triple marking} and define its value by $\e(T_i)=[\e(U),\e(X),\e(K)]_i\in G_{i,i+1}$. 

Any element of $G_{i,i+1}$ can be written as a triple, but not in a unique way. For instance, $[2,0,0]_i=(2,0)_i=[4,-1,1]_i$, if $q=2$. 
The group operations translate to formulae for multiplication and inversion of triples:
\begin{align*}
	[u,x,k]\cdot[v,y,\ell]&=[u\cdot q^{-y}+v\cdot q^k,x+y,k+\ell]\cr
	[u,x,k]^{-1}&=[-u,-k,-x]
\end{align*}
Furthermore, $[u,x,k]_i\in\g{a_i}\le G_{i,i+1}$ if and only if $x=-k$ and $u\cdot q^x\in\Z$. Similarly, $[u,x,k]_i\in\g{a_{i+1}}\le G_{i,i+1}$ if and only if $u=0$, and finally $[u,x,k]_i$ is the group identity if and only if $u=0$ and $x=-k$. 

At the beginning of Algorithm \ref{alg:wp_Hig}, we create a power circuit with base $q$ consisting of a single node $u$ with $\e(u)=1$. We represent each pair $(u,k)_i$ by a triple marking. Of the three markings in each initial triple, two are zero (empty) and the third has either value $+1$ or $-1$ and can be created using $u$. Let $\omega$ be the sum of the sizes of (the supports of) all these markings. We call $\omega$ the weight of the circuit. From the multiplication formula for triples we see that $\omega$ never increases during the algorithm, keeping in mind that after an operation we can ``forget'' the operand and just keep the result. The initial value of $\omega$ is exactly $n=\abs{w}$. 

After step \ref{alg:wp_Hig:init}, we reduce the circuit, which takes $\O(n^2)$ time. From now on, we keep $\Pi$ reduced following the strategy proposed in Remark \ref{rem:pc_strategy}. 
The swapping operation $(x,0)\leftrightarrow(0,x)$ works in the following way for triples: 
\begin{align*}
	[u,x,k]&\mapsto
	\begin{cases}
		[0,0,u\cdot q^x]&\text{if $x=-k$ and $q^x\mid u$ and $u\ge 0$}\cr
		[0,u\cdot q^x,0]&\text{if $x=-k$ and $q^x\mid u$ and $u<0$}
	\end{cases}\cr
	[0,x,k]&\mapsto[x+k,0,0]
\end{align*}

The whole algorithm computes $\O(n)$ many times an $\mathcal{L}^\prime$-irreducible word. Each of these computations necessitates $\O(n)$ arithmetic operations (and subsequent calls to \ERed). The circuit size remains bounded by $\O(n^2\cdot\omega)\subseteq\O(n^3)$. Thus, one call of \ERed takes $\O(n^3\cdot\omega)\subseteq\O(n^4)$ time. We get a total time bound of $\O(n^6)$. 

This concludes the proof of Theorem \ref{thm:wp_Hig}. 

%================================================
\section{Conclusion}\label{sec:conclusion}

We have shown that the word problem for the generalized Higman groups $\Hig{q}{f}$ is solvable in polynomial time. An important ingredient for this result was the extension of the power circuit data structure to arbitrary bases $q\ge 2$. From an algorithmic point of view, this is an interesting result in itself and may provide a useful tool in group theory as well as other areas. 

The techniques used in this paper do not apply to the even more general groups $H_f(p,q)$ and $G_{(p,q)}$, where the underlying Baumslag-Solitar group $\BS(1,q)$ is replaced by $\BS(p,q)=\gr{a,t}{ta^pt^{-1}=a^q}$ for some $p\ge 1$. This is because $\BS(p,q)$ is not a semi-direct product when $p>1$. The word problem for these groups is open. Note that even for $f<4$ the group $H_f(p,q)$ can be non-trivial if $p>1$. 

%================================================
% Bibliography
\addcontentsline{toc}{section}{Bibliography}
%\bibliography{references}

\end{document}